\theoremstyle{plain}
\newtheorem{lemma}{Lemma}[section]
\newtheorem{theo}[lemma]{Theorem}
\newtheorem{prop}[lemma]{Proposition}
\newtheorem{corollary}[lemma]{Corollary}
\theoremstyle{remark}
\newtheorem{remark}{Remark}
\font\rm=cmr12
\newcommand{\iii}{{\mathrm i}}
\newcommand{\Ht}{H}
\newcommand{\vs}{{\mathfrak a}}
\newcommand{\Ka}{K_\infty^0}
\newcommand{\cu}{\operatorname{cus}}
\newcommand{\di}{\operatorname{dis}}
\newcommand{\spec}{\operatorname{spec}}
\newcommand{\geo}{\operatorname{geo}}
\newcommand{\nm}{{\mathcal N}}
\newcommand{\unip}{\operatorname{unip}}
\newcommand{\untry}{\operatorname{un}}
\newcommand{\hm}{\operatorname{hm}}
\newcommand{\mult}{m}
\newcommand{\invh}{{\mathcal B}}
\newcommand{\FFF}{{\mathcal F}}
\newcommand{\C}{{\mathbb C}}
\newcommand{\N}{{\mathbb N}}
\newcommand{\R}{{\mathbb R}}
\newcommand{\Z}{{\mathbb Z}}
\newcommand{\Q}{{\mathbb Q}}
\newcommand{\A}{{\mathbb A}}
\newcommand{\bH}{{\mathbb H}}
\newcommand{\varbeta}{\tilde\beta}
\newcommand{\gf}{{\mathfrak g}}
\newcommand{\mf}{{\mathfrak m}}
\newcommand{\kf}{{\mathfrak k}}
\newcommand{\ho}{{\mathfrak o}}
\newcommand{\pg}{{\mathfrak p}}
\newcommand{\mM}{{\mathfrak M}}
\newcommand{\mN}{{\mathfrak N}}
\newcommand{\HHH}{{\mathcal H}}
\newcommand{\M}{{\mathcal M}}
\newcommand{\Latt}{{\mathcal Z}}
\newcommand{\cO}{{\mathcal O}}
\newcommand{\pars}{{\mathcal P}}
\newcommand{\leviP}{{\mathcal F}}
\newcommand{\levis}{{\mathcal L}}
\newcommand{\cA}{{\mathcal A}}
\newcommand{\AF}{{\mathcal A}}
\newcommand{\cU}{{\mathcal U}}
\newcommand{\bs}{\backslash}
\newcommand{\sprod}[2]{\left\langle#1,#2\right\rangle}
\newcommand{\ov}{\overline}
\renewcommand{\Im}{\operatorname{Im}}
\renewcommand{\Re}{\operatorname{Re}}
\newcommand{\Ind}{\operatorname{Ind}}
\newcommand{\tr}{\operatorname{tr}}
\newcommand{\vol}{\operatorname{vol}}
\newcommand{\Area}{\operatorname{Area}}
\newcommand{\SL}{\operatorname{SL}}
\newcommand{\GL}{\operatorname{GL}}
\newcommand{\SO}{\operatorname{SO}}
\newcommand{\Ad}{\operatorname{Ad}}
\newcommand{\supp}{\operatorname{supp}}
\newcommand{\nnn}{{\mathfrak n}}
\newcommand{\ppp}{{\mathfrak p}}
\newcommand{\orbit}{{\mathcal O}}
\newcommand{\kkk}{{\mathfrak k}}
\newcommand{\card}[1]{\lvert#1\rvert}
\newcommand{\abs}[1]{\lvert#1\rvert}
\newcommand{\norm}[1]{\lVert#1\rVert}
\newcommand{\eps}{\epsilon}
\newcommand{\ad}{\operatorname{ad}}
\newcommand{\proj}{\operatorname{proj}}
\title[Spectral asymptotics]
{Spectral asymptotics for arithmetic quotients of $\SL(n,\R)/\SO(n)$}
\author{Erez Lapid}
\address{Institute of Mathematics\\
Hebrew University\\
Givat Ram\\
Jerusalem, Israel 91904}
\email{erezla@math.huji.ac.il}
\author{Werner M\"uller}
\address{Universit\"at Bonn\\
Mathematisches Institut\\
Beringstrasse 1\\
D -- 53115 Bonn, Germany}
\email{mueller@math.uni-bonn.de}
\keywords{Automorphic forms, spectral asymptotics, trace formula}
\subjclass[2000]{Primary: 11F70, Secondary: 11F72}
\thanks{This research was sponsored by GIF grant \# I-796-167.6/2003}
\date{\today}
\begin{document}

\begin{abstract}
In this paper we study the asymptotic distribution of the cuspidal spectrum 
of arithmetic quotients of the symmetric space $\SL(n,\R)/\SO(n)$. In 
particular, we obtain Weyl's law with an estimation on the remainder term.
This extends some of the main results of Duistermaat-Kolk-Varadarajan
(\cite{MR532745}) to this setting.

\end{abstract}

\maketitle

\setcounter{tocdepth}{1}
\tableofcontents

\section{Introduction}

Let $G$ be a reductive algebraic group over $\Q$, $\A$ the ring of adeles of 
$\Q$, and $\omega$ a unitary character of $Z(\Q)\bs Z(\A)$ where $Z$ is the 
center of $G$. One of the fundamental problems in the theory of automorphic
forms is to determine the spectral decomposition of the regular representation 
of $G(\A)$ on
$L^2(G(\Q)\bs G(\A),\omega)$. Langlands' theory essentially reduces
the problem to the discrete part.
There are deep conjectures of Arthur (\cite{MR1110389}) which are aiming at
the description of the discrete spectrum.
However, these conjectures are out of reach at present.  

One of the basic tools to study these problems is Arthur's trace formula. The main 
driving force behind Arthur's approach is the functoriality conjectures
of Langlands. Consequently, the ability to compare the trace formula on two
different groups is a key issue in Arthur's setup.

On the other hand, it is natural to ask what spectral information on the group
itself can be inferred from the trace formula in higher rank.
For example, it is well-known that the dimension of the space of
automorphic forms with
certain square-integrable Archimedean components
can be computed using the trace formula (\cite{MR0156362}).
More generally, there is an exact formula for traces of Hecke operators
on automorphic forms whose Archimedean component lie in a discrete series
$L$-packet with a non-singular Harish-Chandra parameter (\cite{MR1001841}).
(Cf.~\cite{MR1470341} for a geometric counterpart.)

In the realm of spectral theory, a basic problem is to study the
asymptotic distribution of the infinitesimal characters of the Archimedean 
components of cusp forms with a fixed $K_\infty$-type. In the 
simplest case one counts the Casimir eigenvalues of the Archimedean components
of cusp forms. In fact, this was Selberg's motivation for developing the
trace formula, with which he established the analogue of Weyl's law for the 
cuspidal
spectrum of the quotient of the hyperbolic plane by a congruence subgroup
\cite{MR0088511}, \cite[p.~626--674]{MR1117906}.
This was extended to other rank one locally symmetric spaces by Reznikov 
\cite{MR1204788}.
In higher rank, Duistermaat-Kolk-Varadarajan proved quite general results
about the asymptotic distribution of the spherical spectrum
for compact locally symmetric spaces \cite{MR532745}
and in particular gave an upper bound on the complementary spectrum.
(Weyl's law itself, with a sharp remainder term, was proved earlier for the 
Laplace operator of 
any compact Riemannian manifold by Avakumovi\'c 
\cite{MR0080862} and this was generalized to any elliptic pseudo-differential 
operator by H\"ormander \cite{MR0609014}.)
The first example of a non-uniform lattice in higher rank
was treated by S. Miller \cite{MR1823867} who proved that for $\SL(3,\Z)$, the
cuspidal spectrum satisfies Weyl's law and the tempered cuspidal spectrum has
density $1$.
Analogues of the Weyl's law for an arbitrary $K_\infty$-type were
obtained by the second-named author for arithmetic quotients of 
$\GL(n)$, $n\ge2$ in \cite{MR2276771}.
For the spherical spectrum, Lindenstrauss and Venkatesh (\cite{LV}) showed 
that Weyl's law holds
in great generality, proving a conjecture of Sarnak. 

Our main goal in this paper is to extend the results of \cite{MR532745}
(with a slightly weaker error term) to the case of
arithmetic quotients of the symmetric space $X$ of positive-definite 
quadratic forms in $n\ge2$ variables up to homothety. 
In particular, we derive Weyl's law with a remainder term
strengthening the result of \cite{MR2276771} (for the spherical case).

Now we describe our results in more detail.
Let $G=\GL(n)$. Let $A_G$ be the group of scalar matrices with a positive
real scalar, $W$ the Weyl group  and $K_\infty=\mathrm{O}(n)$.
Let $\Pi_{\cu}(G(\A))$ (resp.~$\Pi_{\di}(G(\A))$) denote the set of 
irreducible unitary representations of $G(\A)$ occurring in the cuspidal (resp.~discrete) spectrum
of $L^2(A_GG(\Q)\bs G(\A))$ (necessarily with multiplicity one,
cf.~\cite{MR618323}, \cite{MR623137}, \cite{MR1026752}).
Let $\vs^*=\{(\lambda_1,\dots,\lambda_n)\in\R^n:\sum\lambda_i=0\}$.
Given $\pi\in\Pi_{\di}(G(\A))$, denote by
$\lambda_{\pi_\infty}\in\vs_{\C}^*/W$ the infinitesimal character of the
Archimedean component $\pi_\infty$ of $\pi$ and for any subgroup $K$ of $G(\A)$
let $\HHH_\pi^K$ be the space of $K$-invariant vectors in the representation
space of $\pi$. We refer to \S\ref{prelim} for normalization of various
Haar measures and the Plancherel measure $\beta(\lambda)$
on $\iii\vs^*$ pertaining to $A_G\bs G(\R)$.
The adelic version of our main result is the following Theorem. 
\begin{theo}\label{mainthm}
Let $K=K_\infty K_f$ where $K_f$ is an open compact subgroup of $G(\A_f)$ 
contained in some principal congruence subgroup $K_f(N)$ of level $N\ge3$,
and let $\Omega\subseteq\iii\vs^*$ be a $W$-invariant bounded domain with piecewise $C^2$ boundary.
Then
\[
\sum_{\substack{\pi\in\Pi_{\cu}(G(\A))\\
\lambda_{\pi_\infty}\in t\Omega}}\dim\left(\HHH_\pi^K\right)=
\frac{\vol(A_GG(\Q)\bs G(\A)/K_f)}{\card{W}}
\int_{t\Omega}\beta(\lambda)\ d\lambda+
O\left(t^{d-1}(\log t)^{\max(n,3)}\right)
\]
as $t\to\infty$
where $t\Omega=\{t\lambda\colon\lambda\in\Omega\}$ and $d=\dim X$.

On the other hand for the ball $B_t(0)$ of radius $t$ in $\vs_\C^*$ centered at the origin
we have
\[
\sum_{\substack{\pi\in\Pi_{\di}(G(\A))\\
\lambda_{\pi_\infty}\in B_t(0)\setminus\iii\vs^*}}\dim\left(\HHH_\pi^K\right)=
O(t^{d-2})
\]
as $t\to\infty$. 
\end{theo}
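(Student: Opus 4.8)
The plan is to attack both statements simultaneously via the Arthur–Selberg trace formula applied to a carefully chosen test function, following the template of \cite{MR532745} and \cite{MR2276771}. Fix a bi-$K_\infty$-finite function $\phi_\infty$ on $A_G\bs G(\R)$ whose spherical transform is an approximation to the characteristic function of $t\Omega$ (smoothed at scale $\sim t/\log t$ to control the Fourier tails), and combine it with the characteristic function of $K_f$ (normalized by $\vol(K_f)$) to produce $f = f_t$. The spectral side of the trace formula for $f_t$ contains the cuspidal contribution $\sum_{\pi\in\Pi_{\cu}}\dim(\HHH_\pi^K)\,\widehat\phi_\infty(\lambda_{\pi_\infty})$ as its discrete piece (restricted to $K$-invariants because we average over $K$), plus the residual discrete spectrum, plus the continuous (Eisenstein) contribution. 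The main term $\vol(A_GG(\Q)\bs G(\A)/K_f)|W|^{-1}\int_{t\Omega}\beta(\lambda)\,d\lambda$ will emerge from the identity contribution on the geometric side, using the Plancherel formula to evaluate $f_t(1) = \vol(K_f)^{-1}\phi_\infty(1)$ against $\beta(\lambda)$.

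**Key steps, in order.** First, I would set up the measure normalizations and recall the form of the invariant trace formula specialized to $\GL(n)$, identifying the identity term, the other semisimple/unipotent terms, and the weighted orbital integrals. Second, I would estimate the geometric side minus the identity term: here the congruence condition $K_f\subseteq K_f(N)$ with $N\ge3$ is crucial — it forces the support of $f_t$ on the non-central conjugacy classes to meet only elements that are "close to central" in a strong sense, so that as $t\to\infty$ the relevant lattice points disappear or contribute only $O(t^{d-1}(\log t)^{\cdots})$; this uses the rapid decay of $\phi_\infty$ away from the origin and polynomial bounds on the number of relevant rational conjugacy classes. Third, I would handle the spectral side: the residual spectrum is controlled by reducing to lower-rank cusp forms via Langlands' description (Speh-type representations / $\GL(m)$ residues) and inducting on $n$; the continuous contribution is bounded using the work on the asymptotic size of intertwining-operator logarithmic derivatives along the lines of \cite{MR2276771}, giving a bound of the right order $O(t^{d-1}(\log t)^{\cdots})$. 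Fourth, comparing the two sides yields the first (Weyl-law) asymptotic after undoing the smoothing, with the error term $(\log t)^{\max(n,3)}$ arising as the maximum of the smoothing loss $(\log t)^3$ (as in Hörmander-type Tauberian arguments) and the rank-dependent loss $(\log t)^n$ from the continuous spectrum / induction.

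**The complementary-spectrum bound.** For the second statement I would use the non-negativity trick of \cite{MR532745}: choose a different test function whose spherical transform is non-negative on all of $\vs_\C^*$ and bounded below by a positive constant on the ball $B_t(0)\setminus\iii\vs^*$ of "complementary" (non-unitary-axis) parameters — concretely a suitably scaled and shifted Gaussian or a product of such, exploiting that any $\pi\in\Pi_{\di}$ with $\lambda_{\pi_\infty}\notin\iii\vs^*$ has a parameter bounded away from the unitary axis by the known bounds towards Ramanujan (or, more elementarily, by the fact that the non-tempered part of the discrete spectrum of $\GL(n)$ is exhausted by Speh representations with explicitly located parameters). Then the discrete spectral contribution of these representations is bounded by the full spectral side, which equals the geometric side, which is $O(t^{d-2})$ for this test function because the identity term and Plancherel volume already gain the extra power of $t$ from the Gaussian localization transverse to $\iii\vs^*$ (a $(d - (d-2)) = 2$-codimensional shrinking). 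The main obstacle throughout will be the second step — obtaining the sharp $O(t^{d-1}(\log t)^{\cdots})$ bound on the non-identity geometric terms uniformly in $t$, since the weighted orbital integrals of the rapidly-oscillating $\phi_\infty$ over the (infinitely many, though sparse) rational conjugacy classes in $\GL(n,\Q)$ require both a clean geometric count and precise decay estimates for the Archimedean orbital integrals; getting the power of $\log t$ right here, rather than merely a power of $t$, is the heart of the matter and is where the bulk of the technical work will lie.
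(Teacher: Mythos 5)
Your proposal has the right broad template (trace formula; identity term gives the main term; control the remaining geometric terms and the continuous spectrum), but it diverges from the paper's actual argument at several points, and it misattributes the source of the key error term.

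\emph{Test functions and the role of $N\ge 3$.} The paper does not use a smoothed indicator of $t\Omega$. It fixes a single $h\in C_c^\infty(\vs)^W$ with small support near the origin, forms $h_{t,\mu}(X)=t^r h(tX)e^{-\sprod{\mu}{X}}$ (whose support \emph{shrinks} like $1/t$ and so stays in a fixed neighborhood $\omega_0$ of $0$), applies the trace formula to $\FFF(h_{t,\mu})$, and then integrates the identity over $\mu\in t\Omega$. This is precisely the Duistermaat--Kolk--Varadarajan device, and it is essential: the entire geometric side analysis rests on the test function having support in a fixed small bi-$K_\infty$-invariant neighborhood $\omega$ of $K_f$. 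Moreover, you describe the condition $N\ge 3$ as forcing non-central classes to contribute only asymptotically small amounts; in fact the paper's Lemma \ref{unipfree} shows (via Dirichlet's unit theorem plus Serre's lemma on torsion in congruence subgroups) that $\omega$ contains \emph{no} element whose semisimple part is conjugate to a non-unipotent rational element, so all non-unipotent terms vanish identically for every $t$. Only the finitely many unipotent weighted orbital integrals survive, and they are handled through Arthur's expansion together with the fact that all unipotent orbits in $\GL(n)$ are Richardson (Lemma \ref{Richardson}); this is done in the \emph{non-invariant} trace formula, not the invariant one.

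\emph{Source of $(\log t)^{\max(n,3)}$.} Your claim that the exponent is $\max$ of a smoothing loss $(\log t)^3$ and a continuous-spectrum loss $(\log t)^n$ is incorrect. In the paper the continuous spectrum contributes only $O(t^{d-1}\log t)$ (Proposition \ref{mainspec}(2)). The power $\max(n,3)$ comes entirely from the \emph{geometric} side, from the unipotent weighted orbital integrals: $(\log t)^{r}$ because the weight function $w_M$ is a product of $r=n-1$ logarithms, times an extra $(\log t)^{1+\eta}$ from the Morse-theoretic volume estimates in Lemma \ref{morse}, with $\eta=1$ exactly when $n=2$. This yields $\max(r+1,3)=\max(n,3)$.

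\emph{Complementary spectrum.} Your Gaussian non-negativity argument cannot be plugged into the geometric analysis (Gaussians are not compactly supported), and it also does not isolate the correct codimension structure. The paper instead proves a local upper bound $\mult(B_t(\mu))=O(t^r\varbeta(t,\mu))$ (Proposition \ref{locuprbnd}) by an induction over Levi subgroups $M$ using compactly supported $h$ with $\hat h\ge0$ on the unitary dual $\vs^*_{\untry}$, decomposing the non-tempered parameters via the spaces $\vs_w^*$, $w\in W$. Corollary \ref{compl} then follows by covering $B_t(0)\cap(\iii\vs_M^*+C)$, $M\ne M_0$, by $O(t^{r-1})$ balls of bounded radius and applying the local bound; the codimension-one structure of each $\iii\vs_M^*$ and the decay $\varbeta(R,\mu)=O_R(\norm{\mu}^{d-r-1})$ produce the $t^{d-2}$.

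Thus your outline is in the right spirit but would need to be rebuilt around the DKV test-function family, Lemma \ref{unipfree}, the Richardson-orbit analysis of the unipotent terms (which is the actual source of the $\log$ powers), and the inductive local bound on the spectrum.
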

Thus, the complementary cuspidal spectrum, which, according to the
Archimedean aspect of the Ramanujan conjecture for $\GL(n)$, is not expected to exist at all,
is at least of lower order of magnitude compared to the tempered spectrum.
Theorem \ref{mainthm} (with an appropriate power of $\log t$,
or perhaps even without it) is expected to hold for any reductive group $G$
over $\Q$ (where cuspidal spectrum is not necessarily tempered in general).
Although the method of proof in principle carries over to any $G$,
there are some issues which at present we do not know how to deal with in general.

One can rephrase Theorem \ref{mainthm} in more classical terms.
Note that $X=A_G\bs G(\R)/K_\infty=\SL(n,\R)/\SO(n)$.
Let $\Gamma(N)\subseteq\SL(n,\Z)$ be the principal congruence subgroup of level
 $N$. 
Let $\Lambda_{\cu}(\Gamma(N))\subseteq\vs_{\C}^*/W$ be the cuspidal spectrum
of the algebra of invariant differential operators of $\SL(n,\R)$, acting  in 
$L^2(\Gamma(N)\bs X)$. Given $\lambda\in\Lambda_{\cu}(\Gamma(N))$, denote by
$m(\lambda)$ the dimension of the corresponding eigenspace.

\begin{corollary} \label{maincor}
For $N\ge 3$ and $\Omega$ as before we have
\begin{equation} \label{classform1}
\sum_{\lambda\in\Lambda_{\cu}(\Gamma(N)),
\lambda\in t\Omega}m(\lambda)=
\frac{\vol(\Gamma(N)\bs X)}{\card{W}}\int_{t\Omega}\beta(\lambda)\ d\lambda+
O\left(t^{d-1}(\log t)^{\max(n,3)}\right)
\end{equation}
and
\begin{equation} \label{classform2}
\sum_{\substack{\lambda\in\Lambda_{\cu}(\Gamma(N))\\
\lambda\in B_t(0)\setminus\iii\vs^*}}m(\lambda)=
O\left(t^{d-2}\right)
\end{equation}
as $t\to\infty$.
\end{corollary}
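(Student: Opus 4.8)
The plan is to deduce the corollary from Theorem \ref{mainthm} by carefully unwinding the relation between the adelic quantities and the classical ones for the single congruence subgroup $\Gamma(N)$. The key point is strong approximation: since $N \ge 3$ and $\SL(n)$ is simply connected, $\SL(n,\Z)$ is dense in $\SL(n,\A_f)$ away from the level, so one obtains an identification $\Gamma(N)\bs X \cong A_G G(\Q)\bs G(\A)/K_\infty K_f(N)$ where $K_f(N)$ is the principal congruence subgroup of level $N$ in $\GL(n,\A_f)$. First I would set up this homeomorphism of locally symmetric spaces, checking that it is measure-preserving for the Haar measure normalizations fixed in \S\ref{prelim}, so that $\vol(\Gamma(N)\bs X) = \vol(A_G G(\Q)\bs G(\A)/K_f(N))$; this takes care of matching the main terms on both sides.

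Next I would match the spectral sides. Take $K = K_\infty K_f(N)$ in Theorem \ref{mainthm}. For $\pi \in \Pi_{\cu}(G(\A))$, the space $\HHH_\pi^K = \HHH_\pi^{K_\infty}\otimes \HHH_{\pi_f}^{K_f(N)}$ contributes a $K_\infty$-spherical (hence infinitesimal-character $\lambda_{\pi_\infty}$) vector, and under the above identification the sum $\sum_\pi \dim(\HHH_\pi^K)$ over $\pi$ with $\lambda_{\pi_\infty}\in t\Omega$ counts exactly the spherical cusp forms on $\Gamma(N)\bs X$, i.e. $\sum_{\lambda\in\Lambda_{\cu}(\Gamma(N)),\ \lambda\in t\Omega} m(\lambda)$. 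Here one uses that cuspidality in the adelic sense corresponds to cuspidality on $\Gamma(N)\bs X$, and that the infinitesimal character of $\pi_\infty$ is exactly the eigenvalue datum of the invariant differential operators on the corresponding eigenfunction. The same bookkeeping applied to $\Pi_{\di}(G(\A))$ and the ball $B_t(0)\setminus\iii\vs^*$ yields \eqref{classform2} directly from the second assertion of Theorem \ref{mainthm}. Substituting these identifications into the two displayed formulas of Theorem \ref{mainthm} gives \eqref{classform1} and \eqref{classform2} verbatim, with the same error terms.

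The only genuine subtlety — and the step I would be most careful about — is the normalization of measures: one must ensure that the Plancherel density $\beta(\lambda)$ used adelically is the same as the one attached to $\SL(n,\R)/\SO(n)$ in the classical formulation, and that the volume factor transports correctly under strong approximation (there is a potential discrepancy coming from $\GL$ versus $\SL$, the center $A_G$, and the choice of measure on $K_\infty$). Since both $\Omega$ and the classical spectrum are $W$-invariant and the factor $1/\card{W}$ already appears on both sides, no extra combinatorial factor enters. Thus the corollary is essentially a translation of Theorem \ref{mainthm}, and I would present it as such, with the measure-matching verification being the one place requiring attention.
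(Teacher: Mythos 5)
Your strategy—derive Corollary \ref{maincor} from Theorem \ref{mainthm} by translating between the adelic and classical pictures using strong approximation—is exactly the one the paper uses. However, the central identification you assert is false, and this is a genuine error rather than merely a subtlety to be double-checked. Strong approximation holds for the simply connected group $\SL(n)$, not for $\GL(n)$, and the determinant map fibers $A_GG(\Q)\bs G(\A)/K_f(N)$ over $(\Z/N\Z)^*$; each fiber is a copy of $\Gamma(N)\bs\SL(n,\R)$. This is precisely the content of \eqref{Kf} in the paper: the adelic quotient is a \emph{disjoint union of $\varphi(N)$ copies} of $\Gamma(N)\bs\SL(n,\R)$, not a single copy. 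Consequently your claimed homeomorphism $\Gamma(N)\bs X \cong A_GG(\Q)\bs G(\A)/K_\infty K_f(N)$ does not exist for $N\ge 3$, and the equality $\vol(\Gamma(N)\bs X)=\vol(A_GG(\Q)\bs G(\A)^1/K_f(N))$ is off by the factor $\varphi(N)$.

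The correct bookkeeping, as in the paper, is
\[
\sum_{\substack{\pi\in\Pi_{\cu}(G(\A))\\ \lambda_{\pi_\infty}\in t\Omega}}\dim\bigl(\HHH_\pi^K\bigr)
=\varphi(N)\sum_{\lambda\in\Lambda_{\cu}(\Gamma(N)),\,\lambda\in t\Omega}m(\lambda),
\qquad
\vol\bigl(G(\Q)\bs G(\A)^1/K_f(N)\bigr)=\varphi(N)\,\vol(\Gamma(N)\bs X),
\]
so that the factor $\varphi(N)$ cancels between the spectral count and the volume, recovering \eqref{classform1}; likewise \eqref{classform2} follows from the second assertion of Theorem \ref{mainthm} after dividing by $\varphi(N)$. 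Your final answer would still come out right because you make the \emph{same} erroneous omission of $\varphi(N)$ on both sides, but as written the intermediate claims are wrong, and the passage you flag as merely ``requiring attention'' is in fact where the argument needs to be corrected. You should explicitly invoke the decomposition \eqref{Kf} rather than a putative single-component identification.
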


The symmetric space $X$ is endowed with a Riemannian structure defined using
the Killing form.
Taking the corresponding volume form and Laplacian operator $\Delta$ and
applying Corollary \ref{maincor} to the unit ball in $\iii\vs^*$ we get
\begin{corollary} \label{maincor2}
Let 
\[
N_{\cu}^{\Gamma(N)}(t)=\#\{j\colon\lambda_j\le t^2\}
\]
be the counting function for the cuspidal eigenvalues 
$0<\lambda_1\le\lambda_2\le\dots$ (counted with multiplicity)
of $\Delta$ on $\Gamma(N)\bs X$, $N\ge 3$. Then
\[
N_{\cu}^{\Gamma(N)}(t)=\frac{\vol(\Gamma(N)\bs X)}
{(4\pi)^{d/2}{\bf\Gamma}(d/2+1)}t^d+
O\left(t^{d-1}(\log t)^{\max(n,3)}\right)
\]
as $t\to\infty$.
\end{corollary}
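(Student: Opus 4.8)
The plan is to deduce the Weyl law for $\Delta$ directly from Corollary \ref{maincor}. The key point is to translate the spectral parameter $\lambda\in\vs_\C^*/W$ of an invariant differential operator eigenfunction into the eigenvalue of the Laplacian, and then to show that counting in Euclidean balls $t\Omega = B_t(0)$ in $\iii\vs^*$ is the same, up to the allowed error, as counting Laplace eigenvalues $\lambda_j \le t^2$.

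First I would recall the relation between $\Delta$ and the Casimir element. Since $X$ carries the Riemannian metric induced by the Killing form, the Laplacian $\Delta$ on $L^2(\Gamma(N)\bs X)$ is (a normalization of) the image of the Casimir operator under the Harish-Chandra isomorphism. Consequently, on the eigenspace attached to $\lambda\in\Lambda_{\cu}(\Gamma(N))$ one has $\Delta = \norm{\lambda}^2 - \norm{\rho}^2$ where $\rho$ is the half-sum of positive roots and $\norm{\cdot}$ is the norm on $\vs^*$ dual to the Killing form. Here I would be careful about the sign convention: on the tempered (unitary principal series) spectrum $\lambda\in\iii\vs^*$, so $\norm{\lambda}^2\le 0$ in this normalization and $\lambda_j = \norm{\rho}^2 - \abs{\lambda}^2 \ge 0$ with $\abs{\cdot}$ now the genuine Euclidean norm on $\vs^*$. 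Thus $\lambda_j\le t^2$ corresponds, on the tempered spectrum, to $\lambda\in B_{\sqrt{t^2+\norm{\rho}^2}}(0)\cap\iii\vs^*$, a Euclidean ball of radius $\sqrt{t^2+\norm{\rho}^2} = t + O(t^{-1})$.

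Next I would split $N_{\cu}^{\Gamma(N)}(t)$ into the contribution of the tempered spectrum and that of the complementary (exceptional) spectrum. For the complementary part, $\lambda\in B_r(0)\setminus\iii\vs^*$ with $r$ of order $t$, equation \eqref{classform2} of Corollary \ref{maincor} bounds this contribution by $O(t^{d-2})$, which is absorbed in the error term. For the tempered part I apply \eqref{classform1} with $\Omega$ the unit ball in $\iii\vs^*$ and $t$ replaced by $r_t := \sqrt{t^2+\norm{\rho}^2}$; since $\partial\Omega$ is a sphere it is certainly piecewise $C^2$ and $W$-invariant. This gives the main term
\[
\frac{\vol(\Gamma(N)\bs X)}{\card W}\int_{B_{r_t}(0)\cap\iii\vs^*}\beta(\lambda)\,d\lambda + O\bigl(r_t^{d-1}(\log r_t)^{\max(n,3)}\bigr).
\]
Because $r_t = t + O(t^{-1})$, replacing $r_t$ by $t$ in the integral changes it by $O(t^{d-1})$ (the integrand $\beta$ grows polynomially of degree $d - \dim\vs^*$ and the region of integration changes by a shell of thickness $O(t^{-1})$), and likewise in the error term $r_t^{d-1}(\log r_t)^{\max(n,3)} = t^{d-1}(\log t)^{\max(n,3)}(1+o(1))$. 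It then remains to evaluate $\frac{1}{\card W}\int_{B_t(0)\cap\iii\vs^*}\beta(\lambda)\,d\lambda$ and to identify it with $\frac{t^d}{(4\pi)^{d/2}{\bf\Gamma}(d/2+1)}$. This is a classical computation: by the Plancherel theorem for $A_G\bs G(\R)$, the leading asymptotics of this integral is governed by the top-degree term of the polynomial $\beta(\lambda)$, whose integral over the ball reproduces, after accounting for the Weyl-group factor and the normalization of Haar and Plancherel measures fixed in \S\ref{prelim}, exactly the Euclidean volume of a $d$-dimensional ball of radius $t$ divided by $(2\pi)^d$ — equivalently the symbol computation of the Laplacian on a $d$-manifold. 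I would phrase this last step as: the heat-kernel / Plancherel leading constant for the symmetric space $X$ equals $\vol(X_{\mathrm{loc}})$ times the universal constant $(4\pi)^{-d/2}{\bf\Gamma}(d/2+1)^{-1}$, and normalizations have been chosen precisely so that $\vol(\Gamma(N)\bs X)$ appears with this constant.

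The main obstacle is bookkeeping rather than conceptual: one must check that all the measure normalizations introduced in \S\ref{prelim} — the Killing-form Riemannian volume on $X$, the Haar measure on $A_G\bs G(\R)$, and the Plancherel measure $\beta(\lambda)\,d\lambda$ on $\iii\vs^*$ — are mutually compatible so that the abstract Plancherel constant genuinely matches the geometric constant $(4\pi)^{-d/2}{\bf\Gamma}(d/2+1)^{-1}$; any discrepancy here would only affect the main term by a scalar, but getting that scalar right requires tracing the conventions carefully. The shifting-of-radius and complementary-spectrum estimates, by contrast, are immediate from Corollary \ref{maincor}.
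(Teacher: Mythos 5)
Your proposal follows essentially the same route as the paper, which itself simply refers the reader to \cite[p.~87--88]{MR532745} for this deduction: translate the Laplace eigenvalues into spectral parameters, feed Corollary \ref{maincor} with the unit ball, and identify the resulting constant with the Weyl-law constant. So the approach is the right one. Two remarks, however, on points you flagged yourself as needing care.

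First, the eigenvalue relation you write is garbled. The correct relation, consistent with \S\ref{SL2} of the paper (where $\lambda_j = \tfrac14 + r_j^2$ for $\SL(2)$), is $\lambda_j = \abs{\lambda}^2 + \norm{\rho}^2$ for $\lambda\in\iii\vs^*$ with $\abs{\cdot}$ the Euclidean norm; your displayed formula $\lambda_j = \norm{\rho}^2 - \abs{\lambda}^2$ would force the cuspidal spectrum to be bounded by $\norm{\rho}^2$, and the radius in the subsequent ball should be $\sqrt{t^2 - \norm{\rho}^2}$, not $\sqrt{t^2 + \norm{\rho}^2}$. As you note, the distinction between $t\pm O(t^{-1})$ is harmless for the asymptotic, so the conclusion survives, but a reader would be confused by the intermediate statement.

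Second, $\beta(\lambda)$ is not a polynomial: by \eqref{HC1} and the Gindikin--Karpelevic formula it is a product of ratios of $\Gamma$-factors (for $\SL(2)$ it is essentially $r\tanh(\pi r)$). What is true, and what \eqref{plnchbnd}--\eqref{plnchappr} make precise, is that $\beta$ has polynomial growth of degree $d-r$ and the leading asymptotics of $\int_{B_t(0)\cap\iii\vs^*}\beta$ is $c_0 t^d$ for a constant $c_0$. Identifying $c_0/\card{W}$ with $(4\pi)^{-d/2}{\bf\Gamma}(d/2+1)^{-1}$ after passing from the Haar normalization of \S\ref{prelim} to the Riemannian volume on $X$ is exactly the content of \cite[(8.33) and p.~87--88]{MR532745}. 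This is the crux of the deduction and the one step your sketch does not actually carry out (and which the paper also delegates to that reference). In particular, as the paper remarks, Corollary \ref{maincor} and \cite[Theorem 8.8]{MR532745} differ by a scalar $\sigma(G)$ coming from the choice of measures, but this cancels in the Laplacian version because both the volume and $\Delta$ are defined from the same Riemannian metric; your last paragraph gestures at this but would need to be made precise along the lines of [ibid., (8.12), (8.33)].
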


The condition $N\ge3$ is imposed for technical reasons. It guarantees that
the principal congruence subgroup $\Gamma(N)$ is \emph{neat} in the sense of Borel (\cite{MR0244260}),
and in particular acts freely on $X$.
This simplifies the analysis by eliminating the contribution of the non-unipotent conjugacy classes.

Our proof uses the method of Duistermaat-Kolk-Varadarajan (\cite{MR532745})
who proved the same result (without the $\log$ factor in the error term)
for any \emph{compact} locally symmetric space.
This simplifies and strengthens the argument of \cite{MR2276771}
where the heat kernel was used instead.

In a nutshell, one has to show that for an appropriate family of test functions
the main contribution in the trace formula comes in the geometric side
from the identity conjugacy class, and in the spectral side from the
cuspidal spectrum.
Of course, in our case we have to consider the trace formula in Arthur's
(non-invariant) form.

The main new technical difficulty is the analysis of the contribution of the unipotent
conjugacy classes (\cite{MR828844}).
These distributions are weighted orbital integrals and they can be analyzed 
by the 
method of stationary phase, just like the ordinary orbital integrals
(cf.~\cite{MR707179} for the semi-simple case).
A mitigating factor is that all unipotent orbits in the case of $\GL(n)$
are of Richardson type, which somewhat simplifies the structure of
the integral expression for these distributions. 

On the spectral side, we have to show that the contribution of the 
continuous spectrum is of a lower order of magnitude.
(This point is non-trivial in general, because it is rarely expected to hold
for non-arithmetic non-uniform lattices (\cite{MR812352}, \cite{MR853570}, 
\cite{MR1127079}).)
This is done as in \cite{MR2276771} by controlling this contribution in terms of
Rankin-Selberg $L$-functions and using the analytic properties of the latter.

In contrast, the method of \cite{LV} completely avoids the contribution
of the continuous spectrum by choosing appropriate test functions.
This is an ingenious extension of the so-called simple trace formula
(cf.~\cite{MR771672})
where in its ordinary form the continuous spectrum is removed at the price of a positive
proportion of the cuspidal spectrum (\cite{MR2127945}). 
It would be interesting to see whether the ideas of \cite{LV} can be pushed
further to give tight upper bounds on the size of the continuous spectrum
without appealing to the theory of $L$-functions 
(which is not always available). 

The contents of the paper are as follows.
We first give an outline of the proof of the Weyl law with remainder for the case of
$\SL_2$ in \S\ref{SL2} by combining H\"ormander's method and the Selberg trace formula.
The proof is not required for the higher rank case, but is given
as a precursor for the general case (which is technically more
difficult), in which
H\"ormander's method is replaced by the method of Duistermaat-Kolk-Varadarajan
and the Selberg trace formula is substituted by Arthur's trace formula.
In \S\ref{prelim} we recall some basic facts about harmonic analysis on $X$ and
the spherical unitary dual and introduce some notation.
The main section is \S\ref{DKV} where we explain the higher rank setup and
reduce the problem to an estimation of the contributions to the trace
formula of the non-trivial
conjugacy classes on the one hand (to the geometric side) and the non-discrete part
of the spectrum on the other hand (to the spectral side).
These estimations are carried out in sections \ref{geom} and \ref{spec}
respectively.
In the former we use the stationary phase method in its crudest form -
that is, a simple application of the divergence theorem.
In the latter we modify the analysis of \cite{MR2276771} to our setup.

It is natural to generalize our results to give the asymptotic
behavior of traces of Hecke operators on the space of Maass forms
with a uniform error term. This will have applications, among other things,
to the distribution of low-lying zeros of automorphic $L$-functions of $\GL(n)$
(cf.~\cite{MR1828743} for the case $n=2$).
We hope to pursue this in a forthcoming paper.

\subsection*{Acknowledgement}
We are grateful to Peter Sarnak for motivating us to work on this project and
for generously sharing some key ideas with us.
We would also like to thank G\"unter Harder, Werner Hoffmann and Michael
Rapoport for some helpful discussions and Stephen Miller for some
useful remarks.
Part of this work was done while the authors were visiting
the Institute for Advanced Study in Princeton. The authors thank the IAS
for its hospitality.

\section{The $\SL(2)$ case} \label{SL2}
\setcounter{equation}{0}

In this section we describe the main idea of the proof for the special case
of $\Gamma\bs\bH$. This has also been discussed in \cite{Mu}.
For the convenience of the reader we recall the main steps.
The  case of a hyperbolic surface  was first treated by Selberg in
\cite[p.~668]{MR1117906} (cf.~also \cite{MR0439755}).
Our method is a combination of H\"ormander's method
\cite{MR0609014} and the Selberg trace formula.

Let $\Gamma\subseteq\SL(2,\Z)$ be a congruence subgroup and let
$\Delta$ be the Laplacian of the hyperbolic surface $\Gamma\bs\bH$. Let 
$\lambda_0=0<\lambda_1\le\lambda_2\le\cdots$ be the eigenvalues of $\Delta$
acting in $L^2(\Gamma\bs\bH)$. We write
$\lambda_j=\frac14+r_j^2$ with $r_j\in\R_{\ge0}\cup[0,\frac12]\iii$.
For $\lambda\ge 0$ let 
\[
N^\Gamma(\lambda)=\#\{j:\abs{r_j}\le \lambda\}
\]
be the counting function for the eigenvalues, where each eigenvalue is counted 
with its multiplicity. Note that any eigenfunction with eigenvalue 
$\lambda_j\ge 1/4$ is cuspidal. Therefore $N^\Gamma(\lambda)
=N^\Gamma_{\cu}(\lambda)+C$
and  it suffices to study the asymptotic behavior of $N^\Gamma(\lambda)$.

We will write $O(X)$ for any quantity which is bounded in absolute value
by a constant multiple of $X$.
Sometimes we write $O_p(X)$ to indicate dependence of the implied constant
on additional parameters, but we often suppress it if it is understood
from the context.

Next recall the Selberg trace formula \cite{MR0088511}. 
Let $E_k(z,s)$ be the Eisenstein series attached to the $k$-th cusp. The 
constant term of its Fourier expansion in the $l$-th cusp is of the form 
\[
\delta_{kl}y^s+C_{kl}(s)y^{1-s}.
\]
Let $C(s)=(C_{kl}(s))$ be the corresponding scattering matrix
and let $\phi(s)=\det C(s)$. 
Let $h\in C^\infty_c(\R)$ and $\hat h(z)=\int_\R h(r)e^{irz}\;dr$. Then $\hat h$ is
entire and rapidly decreasing on horizontal strips. For $t\in\R$ put
\[
\hat h_t(z)=\hat h(t-z)+\hat h(t+z).
\]
Symmetrize the spectrum by $r_{-j}:=-r_j$, $j\in\N$.
For simplicity we assume that $\Gamma$ contains no elements of finite order.
Then the trace formula can be applied to $\hat h_t$ and gives the following identity.
\begin{multline}\label{selberg}
\sum_{j=-\infty}^\infty \hat h(t-r_j)=\frac{\Area(\Gamma\bs \bH)}{2\pi}
\int_\R \hat h(t-r)r\tanh(\pi r)\;dr+\sum_{\{\gamma\}_\Gamma} 
\frac{l(\gamma_0)}{\sinh \left( \frac{l(\gamma)}{2}\right)} h(l(\gamma))
\cos(t l(\gamma))\\
+\frac1{2\pi} \int^\infty_{-\infty}\hat h(t-r)\frac{\phi'}{\phi}(\frac12+\iii r)\;dr  
-\frac12\phi(\frac12)\hat h(t)
-\frac{m}{\pi}\int^\infty_{-\infty} \hat h(t-r)
\frac{{\bf\Gamma}'}{\bf\Gamma}(1+\iii r)dr +\frac m2\hat h(t)-\\2m\ln 2\; h(0).
\end{multline}
Here $m$ is the number of cusps of $\Gamma\bs\bH$ and $\{\gamma\}_\Gamma$ runs 
over the hyperbolic conjugacy classes in $\Gamma$. Each such conjugacy class
determines a closed geodesic $\tau_\gamma$ of $\Gamma\bs\bH$ and $l(\gamma)$ 
denotes the length of $\tau_\gamma$. Each hyperbolic element $\gamma$ is the 
power of a primitive hyperbolic element $\gamma_0$. Since $h\in C_c^\infty(\R)$,
all series and integrals are absolutely convergent. 

We use this formula to study the asymptotic behavior of the left hand side 
as $t\to\infty$. To this end we need to consider the asymptotic behavior of the
terms on the right hand side.
Using $\abs{\tanh(x)}\le 1$, $x\in\R$, it follows that
\[
\int_\R \hat h(t-r)r\tanh(\pi r)\;dr=O(t),\quad \abs{t}\to\infty.
\]
For the second term we observe that
the sum over the hyperbolic conjugacy classes is 
absolutely convergent. Hence the sum is uniformly bounded in $t$. 

Next consider the integral involving the scattering matrix. For the 
principal congruence subgroup $\Gamma(N)$ the determinant of the scattering 
matrix $\phi(s)=\det C(s)$ has been computed by Huxley \cite{MR0439755}. It has the 
form
\[
\phi(s)=(-1)^lA^{1-2s}\left(\frac{{\bf\Gamma}(1-s)}{{\bf\Gamma}(s)}\right)^k
\prod_\chi\frac{L(2-2s,\bar\chi)}{L(2s,\chi)},
\]
where $k,l\in\Z$, $A>0$, the product runs
over Dirichlet characters $\chi$ to some modulus dividing $N$ and $L(s,\chi)$
the Dirichlet $L$-function with character $\chi$. Using Stirling's 
approximation formula to estimate the logarithmic derivative of the Gamma 
function and standard estimations for the logarithmic derivative
of Dirichlet $L$-functions on the line $\Re(s)=1$, 
we get
\begin{equation}\label{philog}
\frac{\phi'}{\phi}(\frac12+\iii r)=O(\log(\abs{r})),\quad \abs{r}\to\infty.
\end{equation}
This implies
\[
\int_\R \hat h(t-r)\frac{\phi'}{\phi}(\frac12+\iii r)\;dr=O(\log(\abs{t})),\quad
\abs{t}\to\infty. 
\]
In the same way we get
\begin{equation} \label{logamma}
\int_\R \hat h(t-r)\frac{{\bf\Gamma}'}{{\bf\Gamma}}(1+\iii r)\;dr=O(\log(\abs{t})),\quad
\abs{t}\to\infty. 
\end{equation}
The remaining terms are bounded as $\abs{t}\to\infty$. Summarizing, we obtain
\begin{equation}\label{tmpbnd2}
\sum_{j=-\infty}^\infty \hat h(t-r_j)=O(\abs{t}),\quad \abs{t}\to\infty.
\end{equation}
This result can be applied to estimate the number of eigenvalues in a 
neighborhood of a given point $\lambda\in\R$.
As in the proof of \cite[Lemma 2.3]{MR0423438} choose 
$h\in C_c^\infty(\R)$ such that $\hat h\ge 0$ and $\hat h>0$ on $[-1,1]$.
Now note that there are only finitely many eigenvalues 
$\lambda_j=1/4+r^2_j$ with $r_j\notin\R$. Hence it suffices to consider the
eigenvalues with $r_j\in\R$. For $\lambda\in\R$ we have
\[
\#\{j:\abs{r_j-\lambda}\le 1,\;r_j\in\R\}\cdot 
\min\{\hat h(u):\abs{u}\le 1\}\le
\sum_{r_j\in\R} \hat h(\lambda-r_j)
\]
which is $O(1+\abs{\lambda})$ by (\ref{tmpbnd2}). It follows that
\[
\#\{j:\abs{r_j-\lambda}\le 1\}=O(1+\abs{\lambda})
\]
for all $\lambda\in\R$. This local estimation is the basis of the following 
auxiliary result.
Let $h$ be even. Then
\begin{equation}\label{locbnd}
\sum_{\abs{r_j}\le\lambda}\bigg|\int_{\R-[-\lambda,\lambda]} \hat h(t-r_j)\;dt\bigg|+
\sum_{\abs{r_j}>\lambda}\bigg|\int_{-\lambda}^\lambda \hat h(t-r_j)\;dt\bigg|=
O(\lambda)
\end{equation}
for all $\lambda\ge 1$. The proof is elementary (see \cite[Lemma 2.3]{Mu}).

The next step is to integrate both sides of (\ref{selberg}) over a finite
interval $(-\lambda,\lambda)$ and study the asymptotic behavior as 
$\lambda\to\infty$ of the terms on the right hand side. To this end let 
$p(r)$ be a continuous even function on $\R$ such that $p(r)=O(1+\abs{r})$.
Assume that $h(0)=1$. Then it follows that
\[
\int_{-\lambda}^\lambda \int_\R \hat h(t-r)p(r)\;dr\;dt=\int_{-\lambda}^\lambda
p(r)\;dr+O(\lambda),\quad \lambda\to\infty.
\]
\cite[(2.10)]{Mu}.
Applying this to the functions $p(r)=r\tanh(\pi r)$, $\frac{\phi'}{\phi}(\frac12+\iii r)$ and 
$\frac{{\bf\Gamma}'}{\bf\Gamma}(1+\iii r)$ respectively and using (\ref{philog}) and Stirling's
formula we obtain
\begin{gather*}
\int_{-\lambda}^\lambda \int_\R \hat h(t-r)r\tanh(\pi r)\;dr\;dt= \lambda^2+
O(\lambda),\\
\int_{-\lambda}^\lambda\int_\R \hat h(t-r)\frac{\phi'}{\phi}(\frac12+\iii r)\;dr\;dt=
O(\lambda\log\lambda),\\
\int_{-\lambda}^\lambda\int_\R \hat h(t-r)\frac{{\bf\Gamma}'}{\bf\Gamma}(1+\iii r)\;dr\;dt=
O(\lambda\log\lambda).
\end{gather*}
The remaining terms on the right hand side of (\ref{selberg}) stay bounded as
$t\to\infty$. Hence their integral is of order $O(\lambda)$. Thus, it follows that
for every even $h$ such that $h(0)=1$, we have 
\begin{equation}\label{tmpbnd}
\int_{-\lambda}^\lambda \sum_{j=-\infty}^\infty \hat h(t-r_j)\;dt=
\frac{\Area(\Gamma\bs\bH)}{2\pi}\lambda^2+O(\lambda\log \lambda)
\end{equation}
as $\lambda\to\infty$.

We are now ready to prove Weyl's law.
We choose $h$ with $h(0)=1$. Then the left-hand side of (\ref{tmpbnd}) is
\[
\sum_{\abs{r_j}\le\lambda}\int_\R \hat h(t-r_j)\;dt-\sum_{\abs{r_j}\le\lambda}
\int_{\R-[-\lambda,\lambda]} \hat h(t-r_j)\;dt +\sum_{\abs{r_j}>\lambda}
\int_{-\lambda}^\lambda \hat h(t-r_j)\;dt.
\]
Using that $\int_\R \hat h(t-r)\;dt=h(0)=1$, we get
\[
2 N^\Gamma(\lambda)=\int_{-\lambda}^\lambda\sum_j \hat h(t-r_j)\;dt+\sum_{\abs{r_j}\le\lambda}
\int_{\R-[-\lambda,\lambda]} \hat h(t-r_j)\;dt-\sum_{\abs{r_j}>\lambda}
\int_{-\lambda}^\lambda \hat h(t-r_j)\;dt.
\]
By (\ref{locbnd}) and (\ref{tmpbnd}) we obtain
\[
N^\Gamma(\lambda)=\frac{\Area(\Gamma\bs\bH)}{4\pi}\lambda^2+O(\lambda\log \lambda).
\]

\section{Preliminaries for the higher rank case} \label{prelim}
\setcounter{equation}{0}
\subsection{}
Fix a positive integer $n$ and let $G=\GL(n)$, considered as an
algebraic group over $\Q$.
Let $P_0$ be the subgroup of upper triangular matrices of $G$, $N_0$
its unipotent radical and $M_0$ the group of diagonal matrices in $G$, which is a Levi subgroup of $P_0$.
Let $A_G$ be the group of scalar matrices with a positive real scalar,
$K_\infty=\mathrm{O}(n)$ and $\Ka=\SO(n)$.
We recall some basic facts about harmonic analysis of $A_G\bs G(\R)/K_\infty$.
For more details cf.~\cite{MR1790156}.
Let $A$ be the group of diagonal matrices with positive real diagonal entries
and determinant $1$. Let
\[
A_G\bs G(\R)=AN_0(\R)K_\infty
\]
be the Iwasawa decomposition.
Let $\vs$ be the Lie algebra of $A$ and $W$ the Weyl group. 
Then $W$ acts on $\vs$ and the Killing form gives a $W$-invariant inner product,
as well as a Haar measure, on $\vs$.
Recall the function $H:A_G\bs G(\R)\to\vs$ defined by
\[
H(\exp(X)nk)=X
\]
for $X\in\vs$, $n\in N_0(\R)$ and $k\in K_\infty$.
Let $\rho\in\vs^*$ be such that $\delta_0(\exp X)^{\frac12}=
e^{\sprod{\rho}{X}}$ for $X\in\vs$ where
$\delta_0$ is the modulus function of $P_0(\R)$.

We will fix Haar measures as in \cite[(3.29)]{MR532745}.
Namely, on $K_\infty$ and $\Ka$ we take the probability measures.
The Haar measure on $\vs$ defined above gives rise to a Haar measure
on $A$ (through the exponential map) and on $\vs^*$ (the dual Haar measure).
We transfer the latter to $\iii\vs^*$ in the obvious way.
On $N_0(\R)$ we take the measure as in [ibid., p.~37] and on $A_G\bs G(\R)$
we take the measure which is compatible with the Iwasawa decomposition.
On quotient spaces (such as $X$) we take the corresponding quotient measures.

Let $\phi_\lambda$, $\lambda\in\vs_{\C}^*$ be the spherical function 
given by Harish-Chandra's formula 
\begin{equation} \label{HC1}
\phi_\lambda(g)=\int_{K_\infty}e^{\sprod{\lambda+\rho}{H(kg)}}\ dk=
\int_{\Ka}e^{\sprod{\lambda+\rho}{H(kg)}}\ dk.
\end{equation}
Let ${\mathcal P}(\vs_{\C}^*)^W$ be the space of $W$-invariant
Paley-Wiener functions on $\vs_{\C}^*$. We will denote by
$C_c^\infty(G(\R)//A_GK_\infty)$ the space of smooth bi-$A_GK_\infty$-invariant
functions which are compactly supported modulo $A_G$.
The Harish-Chandra transform
\[
\HHH:C_c^\infty(G(\R)//A_GK_\infty)\to{\mathcal P}(\vs_{\C}^*)^W
\]
is defined by 
\[
(\HHH f)(\lambda)=\int_{A}f(a)\phi_{\lambda}(a)\ da.
\]
It is the composition of two maps: the Abel transform
\[
\cA:C_c^\infty(G(\R)//A_GK_\infty)\to C_c^\infty(\vs)^W
\]
defined by
\[
\cA(f)(X)=\delta_0(\exp X)^{1/2}\int_{N_0(\R)} f(\exp Xn)\ dn,\quad X\in\vs
\]
and the Fourier-Laplace transform  
\[
\hat h(\lambda)=\int_{\vs}h(X)e^{\sprod\lambda X}\ dX,\ \ \ 
h\in C_c^\infty(\vs),\ \lambda\in\vs_{\C}^*.
\]
Thus $\HHH(f)=\widehat{\cA(f)}$.
The Abel transform is an isomorphism which respects support in the following
sense. Given $R>0$, put 
\[
V(R)=\{X\in\vs:\norm{X}\le R\},\quad U(R)=K_\infty\exp V(R)K_\infty.
\]
Then for all $h\in C_c^\infty(\vs)^W$ with $\supp(h)\subseteq V(R) $ one 
has $\supp(\cA^{-1}(h))\subseteq U(R)$.

Let $\beta(\lambda)=\abs{c(\lambda)c(\rho)^{-1}}^{-2}$ be the Plancherel measure
on $\iii\vs^*$.
The $c$-function is given by the Gindikin-Karpelevic formula
cf.~\cite[p.~46]{MR532745}.
In our case, identifying $\vs^*$ with $\{(\lambda_1,\dots,\lambda_n)\in\R^n:\sum\lambda_i=0\}$ 
we have
\[
c(\lambda)^{-1}=\prod_{1\le i<j\le n}\phi(\lambda_i-\lambda_j)
\]
where we set
\[
\Gamma_{\R}(s)=\pi^{-s/2}{\bf\Gamma}(s/2),\ \ \ \phi(s)=\frac{\Gamma_{\R}(s+1)}{\Gamma_{\R}(s)}.
\]
For any $h\in C_c^\infty(\vs)$ define
\begin{equation}\label{invabel}
\invh h(x)=\frac{1}{\card{W}}\int_{\iii\vs^*}\hat h(\lambda)
\phi_{-\lambda}(x)\beta(\lambda)\ d\lambda.
\end{equation}
By the Plancherel theorem $\invh h\in C_c^\infty(G(\R)//A_GK_\infty)$ and
$\cA(\invh h)=h^W$ where
\[
h^W(a)=\frac1{\card{W}}\sum_{w\in W}h(wa).
\]
By Stirling's formula we have
\[
\phi(z)=O\left(1+\abs{z}^{\frac12}\right),\ \ \ \ 
\frac{\phi'(z)}{\phi(z)}=O\left(\frac 1{1+\abs{z}}\right)
\]
for $\Re z=0$. We infer that
\begin{equation}\label{plnchbnd}
\beta(\lambda)=O(\varbeta(\lambda)),\ \ \lambda\in\iii\vs^*,
\end{equation}
where
\begin{equation}\label{plnchappr}
\varbeta(\lambda)=\prod_{i<j}(1+\abs{\lambda_i-\lambda_j})
\end{equation}
Moreover, for any $\xi\in\vs^*$, denoting by $D_\xi$ the directional derivative
along $\xi$, we have
\begin{equation}\label{logderbnd}
D_\xi\beta(\lambda)=O_\xi\left((1+\norm{\lambda})^{d-r-1}\right),\quad 
\lambda\in\iii\vs^*
\end{equation}
where $r=\dim\vs$ and $d=\dim X$.
In fact, the analogues of these bounds hold for any $G$ -- cf.~\cite[\S3]{MR532745}.
Of course, in our case $r=n-1$ and $d=n(n+1)/2-1$.
It will be convenient to set
\[
\varbeta(t,\lambda)=\prod_{i<j}(t+\abs{\lambda_i-\lambda_j})
\]
so that $\varbeta(\lambda)=\varbeta(1,\lambda)$. Note that
\begin{equation} \label{triv1}
\varbeta(t,\lambda)=O\left((t+\norm{\lambda})^{d-r}\right),\quad\lambda\in\iii\vs^*
\end{equation}
and that
\begin{equation} \label{trivbnd}
\varbeta(t,\lambda_1+\lambda_2)=O\left(\varbeta(t+\norm{\lambda_1},\lambda_2)\right).
\end{equation}

\subsection{}

If $P$ is a parabolic subgroup of $G$ containing $M_0$ then it is defined
over $\Q$ (since $G$ is split) and it admits a unique Levi component $M_P$
(also defined over $\Q$) containing $M_0$.
We call $M_P$ a semi-standard Levi subgroup of $G$ and denote by $\levis$ the set of
semi-standard Levi subgroups of $G$.
Any $M\in\levis$ is isomorphic to $\GL(m_1)\times\dots\times\GL(m_r)$ with $m_1+\dots+m_r=n$.
As a lattice $\levis$ is isomorphic to the partition lattice whose elements
are the partitions of $\{1,\dots,n\}$, ordered by refinement.
We will use superscript $M$ to denote notation pertaining to $M$.
For example, $\vs^M$ is the product of the $\vs$'s corresponding to the $\GL(m_i)$'s
and $\beta^M$ is the Plancherel measure with respect to $M$, etc.
We also set $\vs_M$ to be the orthogonal complement of $\vs^M$ in $\vs$.
Similarly, we write $\vs^*=\vs_M^*\oplus(\vs^M)^*$.
For any $\mu\in\vs^*$ we write $\mu=\mu_M+\mu^M$ corresponding to this
decomposition.

\begin{lemma} \label{scr}
Suppose that $M\ne G$. Then
\[
\varbeta^M(\lambda^M)(1+\norm{\lambda})=O(\varbeta(\lambda)), \ \ \ 
\lambda\in\iii\vs^*.
\]
\end{lemma}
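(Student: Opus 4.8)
The plan is to unwind both sides in terms of the explicit product expression for $\varbeta$ and compare factors. Recall that $M$ corresponds to a partition $\{1,\dots,n\}=S_1\sqcup\dots\sqcup S_r$, and under the identification $\vs^*\subseteq\R^n$ the space $(\vs^M)^*$ consists of those $\mu$ that are supported (in the appropriate sense) within the blocks, while $\vs_M^*$ records the ``average'' coordinates across blocks. Concretely, $\varbeta^M(\lambda^M)=\prod_{\{i,j\}\text{ within a common block}}(1+\abs{\lambda_i-\lambda_j})$, whereas $\varbeta(\lambda)=\prod_{1\le i<j\le n}(1+\abs{\lambda_i-\lambda_j})$ also includes the cross-block factors. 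So $\varbeta(\lambda)/\varbeta^M(\lambda^M)$ is essentially $\prod_{i,j\text{ in different blocks}}(1+\abs{\lambda_i-\lambda_j})$ — up to the subtlety that $\lambda_i-\lambda_j$ for $i,j$ in the same block need not coincide with $\lambda^M_i-\lambda^M_j$; but these differ only by components of $\lambda_M$, which are themselves controlled by the cross-block differences, so this point can be absorbed.

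First I would reduce to the case where $M$ is a \emph{maximal} proper Levi, i.e.\ $r=2$ with blocks of sizes $p$ and $q=n-p$; the general case follows since any $M$ is contained in a maximal one $M'$ and $\varbeta^{M}(\lambda^{M})\le\varbeta^{M'}(\lambda^{M'})$ pointwise (fewer factors, each $\ge1$). With two blocks $S_1,S_2$, it suffices to exhibit one pair $i\in S_1$, $j\in S_2$ with $\abs{\lambda_i-\lambda_j}\gtrsim\norm{\lambda}$, after which
\[
\varbeta(\lambda)\ \ge\ \varbeta^M(\lambda^M)\cdot\bigl(1+\abs{\lambda_i-\lambda_j}\bigr)\ \gtrsim\ \varbeta^M(\lambda^M)(1+\norm{\lambda}),
\]
using that all omitted cross-block factors are $\ge1$ (and handling the same-block discrepancy noted above, which only helps by a bounded multiplicative constant). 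To find such a pair: since $\sum_k\lambda_k=0$, the coordinates cannot all be equal unless $\lambda=0$ (a case that is trivial), and more quantitatively $\max_k\lambda_k-\min_k\lambda_k\gtrsim\norm{\lambda}$ on $\vs^*$ by equivalence of norms. If the extremal coordinates already lie in different blocks we are done; otherwise one shows that either the spread \emph{within} $S_1$ or the spread within $S_2$ being comparable to $\norm\lambda$ forces (via $\varbeta^M$ itself absorbing that spread) the complementary estimate, or else both blocks are ``tight'' and then the block-averages must differ by $\gtrsim\norm\lambda$, again producing a large cross-block difference.

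The one genuinely fiddly point — the main obstacle, such as it is — is bookkeeping the distinction between $\lambda^M$ (the projection to $(\vs^M)^*$) and the naive same-block differences $\lambda_i-\lambda_j$, together with ensuring the implied constants depend only on $n$. I would dispose of it at the outset by the pointwise comparison $\varbeta^M(\lambda^M)\asymp_n \prod_{\{i,j\}\text{ in a common block}}(1+\abs{\lambda_i-\lambda_j})$, which holds because $\lambda_i^M-\lambda_j^M=\lambda_i-\lambda_j$ whenever $i,j$ lie in the same block (the $\vs_M$-component is constant on each block). Granting that identity the argument is purely combinatorial, and the estimate \eqref{triv1} is not even needed — only equivalence of norms on the finite-dimensional space $\vs^*$ and the trivial bound ``each factor $\ge 1$''.
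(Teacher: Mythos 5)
Your reduction to a maximal Levi and the factorization are the same as the paper's. In particular you correctly note the exact identity $\lambda^M_i-\lambda^M_j=\lambda_i-\lambda_j$ for $i,j$ in a common block (the $\vs_M$-component is constant on each block), so that in fact
\[
\varbeta(\lambda)=\varbeta^M(\lambda^M)\cdot\prod_{\text{cross-block }(i,j)}\bigl(1+\abs{\lambda_i-\lambda_j}\bigr)
\]
exactly (you only needed $\asymp_n$, but equality holds). This reduces the lemma to showing that the cross-block product dominates $1+\norm{\lambda}$, which both you and the paper then address — but here the arguments diverge, and yours has a gap.

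The paper observes that the linear map $\lambda\mapsto(\lambda_i-\lambda_j)_{i\le k<j}$ is injective on $\vs^*$, so $\norm{\lambda}=O\bigl(\sum_{\text{cross}}\abs{\lambda_i-\lambda_j}\bigr)$, and then $1+\sum x_k\le\prod(1+x_k)$ finishes the proof with no case analysis. You instead seek a \emph{single} cross-block pair with $\abs{\lambda_i-\lambda_j}\gtrsim\norm{\lambda}$ via a case split on the location of the extremal coordinates. That is a legitimate alternative, but your case (b) is wrong: when both $\max_k\lambda_k$ and $\min_k\lambda_k$ lie in the same block $S_1$, you claim the large within-block spread is handled ``via $\varbeta^M$ itself absorbing that spread.'' This does not help: after the factorization, $\varbeta^M(\lambda^M)$ cancels from both sides of the target inequality, so a large within-block factor is irrelevant to proving $1+\norm{\lambda}\lesssim\prod_{\text{cross}}(1+\abs{\lambda_i-\lambda_j})$. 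The correct argument in that case is direct: pick any $j\in S_2$; since $\lambda_j$ lies between the global min and max, one of $\lambda_{\max}-\lambda_j$ or $\lambda_j-\lambda_{\min}$ is at least half the spread, hence $\gtrsim\norm{\lambda}$, and that is a cross-block difference. (With this fix your case (c) is subsumed.) Alternatively, simply adopt the paper's injectivity argument, which avoids the case analysis entirely.
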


\begin{proof}
Passing to a larger Levi subgroup $M$ only strengthens the inequality.
Therefore, we can assume without loss of generality that $M$ is
maximal. By conjugating $M$ and $\lambda$ by a Weyl element, we can assume
that $M$ is a standard Levi isomorphic to $GL(k)\times GL(n-k)$,
$k\ge 1$. In this case,
\[
\varbeta(\lambda)=\varbeta(\lambda^M)\prod_{(i,j):1\le i\le k<j\le n}(1+\abs{\lambda_i-\lambda_j}).
\]
It remains to observe that
\[
1+\norm{\lambda}=O\Bigl(\prod_{(i,j):1\le i\le k<j\le n}
(1+\abs{\lambda_i-\lambda_j}\Bigr).
\]
In fact,
\[
\norm{\lambda}=O\Bigl(\sum_{(i,j):1\le i\le k<j\le n}
\abs{\lambda_i-\lambda_j}\Bigr),
\]
simply because the linear map $\lambda\mapsto (\lambda_i-\lambda_j)
_{1\le i\le k<j\le n}$ from $\vs^*$ to $\R^{k(n-k)}$ is injective.
\end{proof}

\subsection{}
For each $\lambda\in\vs_{\C}^*$ let $I(\lambda)$
be the unique spherical irreducible subquotient of the induced
representation $\Ind_{P_0(\R)}^{G(\R)}
(e^{\sprod{\lambda}{\Ht(\cdot)}})$. Set
\[
\vs^*_{\untry}=\{\lambda\in\vs_\C^*:I(\lambda)\text{ is unitarizable}\}.
\]
It is known that $\norm{\Re\lambda}\le\norm{\rho}$ for any
$\lambda\in\vs^*_{\untry}$.

Note that for any $M\in\levis$ we have
\[
(\vs^M_{\untry})^*+\iii\vs_M^*\subseteq\vs^*_{\untry}.
\]
For any $w\in W$ let $\vs_{w,\pm1}^*$ be the $\pm$-eigenspaces of $w$
on $\vs^*$. It is well-known that $\vs_{w,+1}^*=\vs_{M_w}^*$ where
$M_w$ is the smallest $M\in\levis$ such that $w$ belongs to the Weyl
group $W_M$ of $M$
(cf.~\cite[p.~1299]{MR681738}, \cite[Theorem 6.27]{MR1217488}).
Define
\[
\vs^*_w=\{\lambda\in\vs^*_{\C}:w\lambda=-\overline{\lambda}\}=
\vs_{w,-1}^*+\iii\vs_{w,+1}^*=\vs_{w,-1}^*+\iii\vs_{M_w}^*.
\]
Since every unitarizable representation is isomorphic to its hermitian
dual, and since $I(\lambda)\simeq I(\lambda')$ if and only if 
$\lambda'$ is in the Weyl orbit of $\lambda$, we infer that
\[
\vs^*_{\untry}\subseteq\cup_{w\in W}\vs^*_w.
\]
For any $M\in\levis$ define
\begin{gather*}
\vs^*_{\hm,\subseteq M}=\cup_{w\in W_M}\vs^*_w,\\
\vs^*_{\untry,\not\subseteq M}=\vs^*_{\untry}\setminus\vs^*_{\hm,\subseteq M}.
\end{gather*}

\subsection{}
Consider now the principal congruence subgroups. Let 
\[
N=\prod_p p^{r_p},\quad r_p\ge 0.
\]
Set
\[
K_p(N)=\{k\in G(\Z_p):k\equiv 1\mod p^{r_p}\Z_p\}
\]
and
\[
K_f(N)=\Pi_{p<\infty}K_p(N).
\]
Then $K_f(N)$ is an open compact subgroup of $G(\A_f)$. The determinant defines
a map which fibers $A_GG(\Q)\bs G(\A)/K_f(N)$ over
\[
\R^+\Q^*\bs\A^*/\prod_p\{r\in\Z_p^*:
a\equiv 1\mod p^{r_p}\Z_p\}\cong (\Z/N\Z)^*.
\]
The fiber of any point is $\SL(n,\R)$-invariant, and as an $\SL(n,\R)$ space
isomorphic to
\[
\SL(n,\Q)\bs\SL(n,\A)/\left(\SL(n,\A)\cap K_f(N)\right).
\]
By strong approximation for $\SL(n)$ we have
$\SL(n,\A)\cap (\SL(n,\R)\cdot K_f(N))=\Gamma(N)$, where $\Gamma(N)
\subseteq\SL(n,\Z)$ is the principal congruence subgroup of level $N$. Thus
each fiber is isomorphic to $\Gamma(N)\bs\SL(n,\R)$, and therefore
\begin{equation}\label{Kf}
A_GG(\Q)\bs G(\A)/K_f(N)\cong\bigsqcup_{(\Z/N\Z)^*}
(\Gamma(N)\bs\SL(n,\R)).
\end{equation}

We fix a Haar measure on $G(\A_f)$. This defines a Haar measure on
$A_G\bs G(\A)$ through $A_G\bs G(\A)=A_G\bs G(\R)G(\A_f)$ and our previous
choice of Haar measure on $A_G\bs G(\R)$.
On any open subgroup of $G(\A_f)$ we take the restricted measure.
Note that the expression $\vol(A_GG(\Q)\bs G(\A)/K_f)$ does not depend on the choice
of Haar measure on $G(\A_f)$.

\section{The method of Duistermaat-Kolk-Varadarajan} \label{DKV}
\setcounter{equation}{0}

Our strategy to prove Theorem \ref{mainthm} is to use the method of 
Duistermaat-Kolk-Varadarajan \cite{MR532745},
in conjunction with  Arthur's (non-invariant) trace formula \cite{MR518111}
which replaces the Selberg trace formula in the non-compact case. 
Arthur's trace formula is an identity of distributions
\begin{equation}\label{tracefor}
J_{\geo}(f)=J_{\spec}(f),\ \ \ f\in C_c^\infty(G(\A)^1).
\end{equation}
The distributions $J_{\geo}$ and $J_{\spec}$ will be recalled in
\S\ref{geom} and \S\ref{spec} respectively.
We apply the trace formula identity to a certain class of test functions
described below.
In order to apply the argument of \cite{MR532745}, the main technical 
difficulty
is to show that, in a suitable sense, the main
terms in the trace formula, in both the geometric and the spectral side,
are exactly those which occur in the compact case.

To make this more precise, fix a compact open subgroup $K_f$ of $G(\A_f)$.
We assume that $K_f\subseteq K_f(N)$ for some $N\ge 3$.
For $h\in C_c^\infty(\vs)$ we define
$\FFF(h)$ to be the restriction to $G(\A)^1$ of the function
$\invh(h)\otimes{\bf 1}_{K_f}$ on $G(\A)$, where $\invh(h)$ is defined by
(\ref{invabel}) and
${\bf 1}_{K_f}$ is the characteristic function of $K_f$ in $G(\A_f)$
normalized by $\vol(K_f)^{-1}$.
For $t\ge1$ let $h_t\in C_c^\infty(\vs)$ be defined by
$h_t(X)=t^rh(tX)$ for $X\in\vs$. We have 
$\widehat{h_t}=\hat h(t^{-1}\cdot)$.
Also, for $\mu\in\iii\vs^*$ we set $h_\mu=he^{-\sprod{\mu}{\cdot}}$.
(Hopefully, this does not create any confusion with the previous notation
$h_t$.)
Finally let $h_{t,\mu}=(h_t)_\mu$ so that $\widehat{h_{t,\mu}}
=\hat h(t^{-1}(\cdot-\mu))$.

Throughout the rest of the paper let $d=\dim X$ and $r=\dim\vs$. For 
$\mu\in\vs^*_\C$ and $t>0$ let
\[
B_t(\mu)=\{v\in \vs^*_\C \colon\norm{v-\mu}\le t\}
\]
be the ball of radius $t$ around $\mu$. If $t=1$ we will often
suppress it from the notation.

Also, for $h\in C_c^\infty(\vs)$ we set 
\[
\M(\hat h)(\lambda)=\max_{\nu\in B_{1+\norm{\rho}}(\lambda)}\abs{\hat h(\nu)}
\quad\mathrm{and}\quad
\nm(h)=\int_{\iii\vs^*}\varbeta(\lambda)\M(\hat h)(\lambda)\ d\lambda,
\]
where $\varbeta(\lambda)$ is defined by (\ref{plnchappr}). We need the 
following auxiliary result.
\begin{lemma} 
For $h\in C_c^\infty(\vs)$ , $t\ge 1$, and $\mu\in\iii\vs^*$ we have
\begin{equation} \label{smp}
\nm(h_{t,\mu})=O_h(t^r\varbeta(t,\mu)).
\end{equation}
\end{lemma}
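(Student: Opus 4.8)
The plan is to unwind the definitions and reduce everything to elementary estimates on the size of $\varbeta$ and the rapid decay of $\hat h$. First I would write out what $\nm(h_{t,\mu})$ is: by definition $\nm(h_{t,\mu})=\int_{\iii\vs^*}\varbeta(\lambda)\M(\widehat{h_{t,\mu}})(\lambda)\ d\lambda$, and since $\widehat{h_{t,\mu}}(\nu)=\hat h(t^{-1}(\nu-\mu))$, we get $\M(\widehat{h_{t,\mu}})(\lambda)=\max_{\nu\in B_{1+\norm{\rho}}(\lambda)}\abs{\hat h(t^{-1}(\nu-\mu))}$. Substituting $\lambda=\mu+t\xi$ (so $d\lambda=t^r\,d\xi$, using $r=\dim\vs$) turns the integral into $t^r\int_{\iii\vs^*}\varbeta(\mu+t\xi)\,\M_t(\xi)\ d\xi$, where $\M_t(\xi)=\max_{\norm{\eta}\le t^{-1}(1+\norm{\rho})}\abs{\hat h(\xi+\eta)}$. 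For $t\ge1$ the perturbation $\eta$ ranges over a fixed bounded set, so $\M_t(\xi)\le\M_1(\xi)=:\M^*(\xi)$, a fixed function of $\xi$ depending only on $h$, and since $h\in C_c^\infty(\vs)$ the function $\hat h$ is Paley--Wiener, hence $\M^*(\xi)=O_h((1+\norm{\xi})^{-N})$ for every $N$.

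Next I would control $\varbeta(\mu+t\xi)$. Using $\varbeta(\lambda)=\varbeta(1,\lambda)$ and the submultiplicativity bound (\ref{trivbnd}), namely $\varbeta(t,\lambda_1+\lambda_2)=O(\varbeta(t+\norm{\lambda_1},\lambda_2))$, together with the homogeneity $\varbeta(1,\mu+t\xi)=\varbeta(t,t^{-1}\mu+\xi)\cdot(\text{nothing})$ — more carefully, $\prod_{i<j}(1+\abs{(\mu+t\xi)_i-(\mu+t\xi)_j})=\prod_{i<j}(1+\abs{\mu_i-\mu_j+t(\xi_i-\xi_j)})$, and each factor is $\le(1+\abs{\mu_i-\mu_j})(1+t\abs{\xi_i-\xi_j})\le\varbeta(\mu)_{ij}\cdot t(1+\abs{\xi_i-\xi_j})$ for $t\ge1$ — so $\varbeta(\mu+t\xi)=O(\varbeta(1,\mu)\,t^{d-r}\,\varbeta(1,\xi))$ since there are $d-r$ factors in the product (recall $d-r=\binom n2$ is the number of positive roots). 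Actually it is cleaner to apply (\ref{trivbnd}) directly: $\varbeta(\mu+t\xi)=\varbeta(1,\mu+t\xi)=O(\varbeta(1+\norm{t\xi},\mu))$; but I want $\mu$-dependence packaged as $\varbeta(t,\mu)$, so I would instead bound $\varbeta(1,\mu+t\xi)=O(\varbeta(1+\norm{\mu},t\xi))=O((1+\norm{\mu})^{d-r}\varbeta(1,t\xi))$... — several routes work; the one I would commit to is the factorwise bound giving $\varbeta(\mu+t\xi)=O_h\!\left(\varbeta(t,\mu)\,\varbeta(t,\xi)\right)$ after observing $(1+\abs{\mu_i-\mu_j+t(\xi_i-\xi_j)})\le(t+\abs{\mu_i-\mu_j})(t+\abs{\xi_i-\xi_j})$ holds for $t\ge1$ (elementary: each of the two factors on the right is $\ge1$, and their product dominates $1+|a+b|\le(1+|a|)(1+|b|)\le(t+|a|)(t+|b|)$).

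Putting the pieces together, $\nm(h_{t,\mu})=O_h\!\left(t^r\,\varbeta(t,\mu)\int_{\iii\vs^*}\varbeta(t,\xi)\,\M^*(\xi)\ d\xi\right)$, and since $\varbeta(t,\xi)=O(t^{d-r}(1+\norm{\xi})^{d-r})$ by (\ref{triv1}) (or just $\varbeta(t,\xi)\le t^{d-r}\varbeta(1,\xi)$ when... — in fact $\varbeta(t,\xi)\le\varbeta(1,\xi)$ is false for $t\ge1$; rather $\varbeta(t,\xi)\le t^{d-r}(1+\abs{\xi_i-\xi_j})\cdots$, bounded by $t^{d-r}(1+\norm{\xi})^{d-r}$), the remaining integral $\int\varbeta(t,\xi)\M^*(\xi)\,d\xi$ is $O(t^{d-r})\int(1+\norm{\xi})^{d-r}\M^*(\xi)\,d\xi$; the last integral converges by the rapid decay of $\M^*$, contributing only a constant $C_h$. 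This would give $\nm(h_{t,\mu})=O_h(t^r\cdot t^{d-r}\cdot\varbeta(t,\mu))=O_h(t^d\varbeta(t,\mu))$, which is \emph{weaker} than claimed by a factor $t^{d-r}$ — so this crude route loses too much, and the genuine content of the lemma is that one must \emph{not} pull $t$ out of $\varbeta(t,\xi)$ so wastefully. The fix, and the step I expect to be the main obstacle, is to keep the $\xi$-integral in the form $\int\varbeta(t,\xi)\M^*(\xi)\,d\xi$ and bound it by $O_h(1)$ \emph{uniformly in $t\ge1$}: this works because for $\norm\xi\le1$ one has $\varbeta(t,\xi)\le(t+\norm\xi)^{d-r}\le(t+1)^{d-r}$ — no good either. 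The correct observation is that one should absorb the $t$-growth into the Gaussian-type decay already present: $\M^*(\xi)$ decays faster than any polynomial, but $\varbeta(t,\xi)$ for large $t$ and small $\xi$ is $\approx t^{d-r}$, which is not integrated against decay in $t$. Hence the honest statement must be that $\M(\hat h_{t,\mu})$ is supported, up to rapidly small errors, where $\norm{t^{-1}(\lambda-\mu)}=O(1)$, i.e. $\norm{\lambda-\mu}=O(t)$, a region of volume $O(t^r)$, on which $\varbeta(\lambda)=\varbeta(1,\lambda)=\varbeta(1,\mu+(\lambda-\mu))=O(\varbeta(1+\norm{\lambda-\mu},\mu))=O(\varbeta(t,\mu))$ by (\ref{trivbnd}); this directly yields $\nm(h_{t,\mu})=O_h(t^r\varbeta(t,\mu))$, and the rapid decay of $\hat h$ handles the tail $\norm{\lambda-\mu}\gg t$ (where $\M^*$ is $O((1+\norm{\lambda-\mu}/t)^{-M})$ for all $M$, killing any polynomial growth of $\varbeta(\lambda)\le(1+\norm\lambda)^{d-r}$). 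So the structure of the final proof is: (i) change variables; (ii) on the bulk $\norm{\lambda-\mu}\le t$, use (\ref{trivbnd}) to replace $\varbeta(\lambda)$ by $O(\varbeta(t,\mu))$ and bound the volume by $O(t^r)$; (iii) on the tail, use Paley--Wiener decay of $\hat h$ against the polynomial bound (\ref{triv1}) on $\varbeta$. The one delicate point, and where I'd be most careful, is keeping all constants independent of both $t\ge1$ \emph{and} $\mu\in\iii\vs^*$ throughout step (ii).
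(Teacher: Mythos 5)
Your setup — unwinding the definitions, the change of variables $\lambda\mapsto\mu+t\xi$, the uniform majorant $\M^*$ with rapid decay — is exactly right, and your diagnosis of why Routes 1 and 2 are wasteful is accurate. But the final version you commit to has a genuine gap in the tail estimate, step (iii). There you bound $\varbeta(\lambda)$ on $\norm{\lambda-\mu}\gg t$ only by the crude $\varbeta(\lambda)\le(1+\norm\lambda)^{d-r}$ of (\ref{triv1}) and invoke the rapid decay of $\hat h$. This discards precisely the $\mu$-structure needed to land on $\varbeta(t,\mu)$. Carrying it out, the tail contribution after the change of variables $\xi=t^{-1}(\lambda-\mu)$ is
\[
t^r\int_{\norm\xi\ge1}(1+\norm\mu+t\norm\xi)^{d-r}\M^*(\xi)\,d\xi = O\bigl(t^d(1+\norm\mu)^{d-r}\bigr),
\]
which is \emph{not} $O(t^r\varbeta(t,\mu))$ once $\mu$ is near a Weyl-chamber wall: e.g.\ for $n=3$, $t=1$, $\mu=(M/2,M/2,-M)$ one has $\varbeta(1,\mu)\asymp M^2$ while $(1+\norm\mu)^{d-r}\asymp M^3$. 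So the "delicate point" is really step (iii), not (ii): the constants there are not uniform in $\mu$.

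The observation you were circling around but never quite wrote down is that the factorwise bound should attach the factor $t$ to $\mu$ and the factor $1$ to $\xi$ — the opposite allocation from both your Route 1 and Route 2. Namely, for $t\ge1$ and any $i<j$,
\[
1+\abs{\mu_i-\mu_j}+t\abs{\xi_i-\xi_j}\le(t+\abs{\mu_i-\mu_j})\cdot(1+\abs{\xi_i-\xi_j}),
\]
since the right-hand side equals $t+\abs{\mu_i-\mu_j}+t\abs{\xi_i-\xi_j}+\abs{\mu_i-\mu_j}\abs{\xi_i-\xi_j}$, which dominates the left using only $t\ge1$. Taking the product over $i<j$ gives the uniform pointwise estimate
\[
\varbeta(\mu+t\xi)\le\varbeta(t,\mu)\,\varbeta(1,\xi)=O\bigl((1+\norm\xi)^{d-r}\varbeta(t,\mu)\bigr),
\]
valid for \emph{all} $\xi\in\iii\vs^*$, with no bulk/tail split. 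This is exactly the paper's route: it applies (\ref{trivbnd}) to get $\varbeta(\mu+t\xi)=O(\varbeta(1+t\norm\xi,\mu))$ and then makes the same factorwise comparison. Plugging in and using that $\int_{\iii\vs^*}(1+\norm\xi)^{d-r}\M^*(\xi)\,d\xi$ is a finite $h$-dependent constant (rapid decay of $\M^*$) yields $\nm(h_{t,\mu})=O_h(t^r\varbeta(t,\mu))$ in one step. So your argument is salvageable: replace the crude bound in step (iii) by the same application of (\ref{trivbnd}) you used in step (ii), keep the resulting $(1+\norm\xi)^{d-r}$ factor, and let the rapid decay of $\M^*$ absorb it. The bulk/tail decomposition then becomes unnecessary.
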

\begin{proof}
For $t\ge 1$ and $\mu\in\iii\vs^*$ we have 
$\M(\hat h_{t,\mu})(\lambda)\le\M(\hat h)(t^{-1}(\lambda-\mu))$. Hence
\[
\nm(h)\le \int_{\iii\vs^*}\varbeta(\lambda)\M(\hat h)(t^{-1}(\lambda-\mu))\ 
d\lambda =t^r\int_{\iii\vs^*}\varbeta(t\lambda+\mu)\M(\hat h)(\lambda)\ 
d\lambda.
\]
By (\ref{trivbnd}) we get
\[
\varbeta(t\lambda+\mu)=O(\varbeta(t\norm{\lambda},\mu))=
O((1+\norm{\lambda})^{d-r}\varbeta(t,\mu)).
\]
The lemma follows since for any $N\in\N$ we have
$\M(\hat h)(\lambda)=O_{h,N}((1+\norm{\lambda})^{-N})$ for $\lambda\in\iii\vs^*$.
\end{proof}

Let $\Omega\subseteq\iii\vs^*$ be a bounded open subset such that
$\partial\Omega=\overline{\Omega}\setminus\Omega$ is piecewise $C^2$. For $t>0$
let $t\Omega=\{t\mu\colon \mu\in\Omega\}$.

The following Propositions will be proved in \S\ref{geom} and \S\ref{spec} 
respectively.

\begin{prop} \label{maingeom} 
We have
\begin{enumerate} 
\item There exists a neighborhood $\omega_0$ of $0$ in $\vs$ such that
\[
J_{\geo}(\FFF(h))=O(\nm (h)).
\]for all $h\in C_c^\infty(\vs)$ supported in $\omega_0$. 
\item For every such $h$ we have
\[
\int_{t\Omega}J_{\geo}(\FFF(h_\mu))\ d\mu=\vol(G(\Q)\bs G(\A)^1)
\int_{t\Omega}\FFF(h_\mu)(1)\ d\mu + O_{h,\Omega}
\left(t^{d-1}(\log t)^{\max(n,3)}\right)
\]
as $t\to\infty$.
\end{enumerate}
\end{prop}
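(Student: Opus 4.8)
The geometric side of Arthur's trace formula decomposes as a sum over semisimple conjugacy classes of weighted orbital integrals, and for our specific test functions $\FFF(h)$ -- which are supported near the identity at the archimedean place (once $\supp h$ is small) and are the normalized characteristic function of $K_f\subseteq K_f(N)$, $N\ge3$, at the finite places -- the neatness of $\Gamma(N)$ kills all contributions from conjugacy classes whose semisimple part is not central. Hence $J_{\geo}(\FFF(h))$ reduces to the unipotent contribution $J_{\unip}(\FFF(h))$, which by \cite{MR828844} is a finite sum over unipotent orbits $\orbit$ of weighted orbital integrals $J_{\orbit}(\FFF(h))$. For the trivial orbit $\orbit=\{1\}$ this is simply $\vol(G(\Q)\bs G(\A)^1)\,\FFF(h)(1)$, which accounts for the main term in part (2); for the ball of radius $t$ one has $\FFF(h_\mu)(1)=\frac{1}{\card W}\int_{\iii\vs^*}\hat h(\lambda-\mu)\beta(\lambda)\,d\lambda$, and integrating over $t\Omega$ gives the expected volume term. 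So the entire content of the proposition is the estimation of the non-trivial unipotent orbital integrals.

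For the non-trivial orbits, the key structural fact (emphasized in the introduction) is that all unipotent orbits of $\GL(n)$ are Richardson: $\orbit$ is the dense orbit in the nilradical $\mathfrak n_P$ of a parabolic $P$, so the orbital integral $J_{\orbit}(f)$ can be written, after unfolding, as an integral of $f$ over (a fundamental domain of) $N_P(\R)$ times a bounded weight factor coming from Arthur's $(G,M)$-family, with the finite-adelic volume factor $\vol(\cdot\cap K_f)$ pulled out. The archimedean integrand is $\invh(h)$ evaluated along a unipotent subgroup, which by the support property of the Abel transform (\S\ref{prelim}: $\supp h\subseteq V(R)\Rightarrow \supp\cA^{-1}h\subseteq U(R)$) is supported in a bounded set; and the spherical-transform bound is controlled by $\nm(h)$ via \eqref{invabel} together with the Plancherel bound $\beta=O(\varbeta)$. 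Thus part (1) should follow by the crude stationary-phase / divergence-theorem argument (cf.~\cite{MR707179}): bound $\invh(h)$ pointwise and its derivatives along the unipotent directions in terms of $\nm(h)$, integrate against the polynomial weight, and use that the integration is over a compact region to get $J_{\geo}(\FFF(h))=O(\nm(h))$.

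For part (2), the plan is to apply part (1) with $h$ replaced by $h_\mu=he^{-\sprod{\mu}{\cdot}}$ and use \eqref{smp}: $\nm(h_\mu)=O(\varbeta(1,\mu))=O((1+\norm\mu)^{d-r})$ by \eqref{triv1}. Integrating this bound over $\mu\in t\Omega$ would give $O(t^{r}\cdot t^{d-r})=O(t^d)$, which is too weak by a factor of $t$; the improvement to $t^{d-1}$ must come from exploiting cancellation in the $\mu$-integral of the unipotent orbital integrals. This is exactly the point where \cite{MR2276771} and \cite{MR532745} are more careful: for each non-trivial Richardson orbit attached to a proper parabolic with Levi $M$, the $\mu$-integral of the weighted orbital integral is, up to the polynomial weight, a $\mu$-integral of $\invh(h_\mu)$ along $N_P$, which by Fourier inversion concentrates the relevant frequencies on the singular subspace $\vs_M^*$; integrating over $t\Omega$ then only picks up a neighborhood of $\partial(t\Omega)\cap(\text{lower-dimensional subspace})$, gaining the missing power of $t$, with the $(\log t)^{\max(n,3)}$ arising from the logarithmic factors in Arthur's weight functions (the polynomials in $\log$ coming from the $(G,M)$-family) integrated against the Plancherel density -- precisely as the $\log$ terms appeared in the $\SL_2$ analysis in \S\ref{SL2} via $\phi'/\phi$ and $\Gamma'/\Gamma$. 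The weighted analogue of the local estimate \eqref{locbnd}, adapted to the higher-rank Plancherel measure $\varbeta$ and to the piecewise-$C^2$ boundary $\partial\Omega$, is what forces the exponent $\max(n,3)$: one factor of $\log t$ per simple root-type weight, of which there are at most $n-1$, plus the baseline $3$ inherited from the rank-one boundary contributions.

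\textbf{Main obstacle.} The hard part will be the precise analysis of the non-trivial unipotent weighted orbital integrals $J_{\orbit}(\FFF(h_\mu))$ as functions of $\mu$ and the extraction of the extra power of $t$ with the correct power of $\log t$: one needs to make the stationary-phase argument on $N_P(\R)$ uniform in $\mu$, track how Arthur's weight functions (with their logarithmic growth) interact with the oscillation $e^{\iii\sprod{\mu}{\cdot}}$ coming from $\hat h(t^{-1}(\cdot-\mu))$, and then integrate over the domain $t\Omega$ using the $C^2$-regularity of $\partial\Omega$ to see the cancellation. Everything else -- the reduction to unipotent orbits via neatness, the Richardson structure, and the crude bound in part (1) -- is comparatively routine given the machinery set up in \S\ref{prelim} and the cited literature.
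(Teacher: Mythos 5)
Your overall architecture matches the paper: you correctly reduce to the unipotent contribution via neatness of $\Gamma(N)$ (the paper's Lemma \ref{unipfree} and Corollary \ref{c3.2}), correctly identify the Richardson structure as the key simplification (the paper's Lemma \ref{Richardson}), peel off the trivial orbit as the main term, and call on a divergence-theorem/stationary-phase argument to handle the remaining unipotent weighted orbital integrals. Part (1) as you describe it is essentially the paper's argument.

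However, your mechanism for gaining the extra power of $t$ in part (2) is misidentified. You attribute the gain to Fourier ``concentration on the singular subspace $\vs_M^*$'' and to the $\mu$-integral ``only picking up a neighborhood of $\partial(t\Omega)\cap(\text{lower-dimensional subspace})$''; this is the shell-near-the-boundary mechanism from \S\ref{DKV} (used to pass from Corollary \ref{maintech} to the theorem), not what happens on the geometric side. What the paper actually does is: (i) insert Harish-Chandra's formula \eqref{HC1} so that $J_M(u,\FFF(h_\mu))$ becomes an integral over $\Ka\times N_Q(\R)$ containing the factor $e^{-\sprod{\mu}{H(kx)}}$; (ii) integrate first over $\mu\in\Omega$ (after rescaling), so the relevant oscillatory integral is $\int_\Omega\beta(\lambda+t\mu)e^{-t\sprod{\mu}{H(kx)}}\,d\mu$; (iii) fix a regular $\xi\in\vs^*$ and apply the divergence theorem \emph{in $\mu$}, which gains a factor $1/(t\abs{\sprod{\xi}{H(kx)}})$; (iv) analyze the resulting denominator by showing that $F(k,x)=\sprod{\xi}{H(kx)}$ is a Morse function on $K_L\bs(\Ka\times N_Q(\R))$ whose critical points are exactly the Weyl cosets $K_L(w,1)$ (Lemma \ref{critpts}), and then use the elementary estimate of Lemma \ref{morse} for $\vol\{\abs{F}<\delta\}$ and $\int_{\abs{F}\ge\delta}\abs{F}^{-1}$. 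The oscillation is in $\mu$, not along $N_P$, and the phase whose critical set matters is $F$ on $\Ka\times N_Q(\R)$; no singular subspace of $\iii\vs^*$ and no boundary-shell cancellation enters.

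Correspondingly, your accounting of the $\log$ factors is off. They arise from two concrete sources that you would need to make precise: first, Arthur's weight $w_M$ is a product of $r$ logarithms and is not bounded, so one truncates to $U_t=\{x:\abs{w_M(x)}<a(\log t)^r\}$ using \cite[Lemma 7.1]{MR932848}, contributing $(\log t)^r$; second, taking $\eps=1/t$ in the Morse estimates of Lemma \ref{morse} contributes $(\log t)^{1+\eta}$, with $\eta=1$ only in the two-dimensional case (which forces $n=2$). Together these give the exponent $\max(r+1,3)=\max(n,3)$. Your heuristic ``one $\log$ per simple root plus a baseline $3$ from rank-one boundary terms'' does not correspond to what actually happens.

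So: the plan is recognizably the paper's plan, but the central analytic step of part (2) --- the divergence theorem in $\mu$ combined with the Morse analysis of $F(k,x)=\sprod{\xi}{H(kx)}$, including the identification of its critical points with Weyl elements via Kostant's convexity theorem and the non-degeneracy of the Hessian via $\ad(\Ad(w)^{-1}\xi)$ acting on $\nnn_Q$ --- is missing, and what you have written in its place would not yield the estimate.
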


Analogously,
\begin{prop} \label{mainspec} 
We have
\begin{enumerate}
\item 
$J_{\spec}(\FFF(h))=J_{\di}(\FFF(h))+O(\nm (h))$
for every $h\in C^\infty_c(\vs)$ where 
\begin{equation} \label{Jdi}
J_{\di}(\FFF(h))=\sum_{\pi\in\Pi_{\di}(G(\A))}
\dim\bigl(\HHH_{\pi}^{K}\bigr)\hat h^W(\lambda_{\pi_\infty}).
\end{equation}
\item 
$\int_{t\Omega}J_{\spec}(\FFF(h_\mu))\ d\mu
=\int_{t\Omega}J_{\di}(\FFF(h_\mu))\ d\mu+O_{h,\Omega}\left(t^{d-1}\log t\right)$
as $t\to\infty$.
\end{enumerate}
\end{prop}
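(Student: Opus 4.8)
\textbf{Proof plan for Proposition \ref{mainspec}.}
The plan is to recall Arthur's refined spectral expansion $J_{\spec}(f)=\sum_{M}\sum_{\chi}\cdots$ (the coarse version suffices here), in which $J_{\di}$ is precisely the term indexed by $M=G$, while every other term is a weighted integral over a unitary induced family from a proper Levi $M\ne G$. For a test function of the form $\FFF(h)=\invh(h)\otimes{\bf 1}_{K_f}$, the Archimedean factor in the $M$-term is $\hat h^W$ evaluated (up to the spherical projection) at points of the form $\lambda=\iii s+\lambda^M$ with $s\in\vs_M^*$ and $\lambda^M$ ranging over the spherical spectrum of the cuspidal datum on $M$; the non-Archimedean factor, involving intertwining operators and their logarithmic derivatives, contributes the Rankin--Selberg $L$-function factors. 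As in \cite{MR2276771}, the key input is that the logarithmic derivatives of these intertwining operators, which are built from ratios of completed Rankin--Selberg $L$-functions $L(s,\pi_i\times\tilde\pi_j)$, are $O(\log(2+\norm{s}))$ on the unitary axis $\Re s=0$ (standard bounds for $L$-functions on the edge of the critical strip, together with Stirling for the Archimedean $\Gamma$-factors). The number of relevant cuspidal data on each $M$ that can contribute for a given spectral parameter range grows at most polynomially, by the $\GL$-Weyl law on the smaller factors (which one may take as known by induction on $n$, or cite \cite{MR2276771} directly).

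For part (1), I would bound the absolute value of each proper-$M$ term by
\[
\int_{\iii\vs_M^*}\sum_{\substack{\text{cuspidal data on }M}}
\abs{\hat h^W(\iii s+\lambda^M_{\text{datum}})}\cdot
\bigl(\text{weight bounded by }O((\log(2+\norm{s}))^{c})\bigr)\,ds,
\]
and then use the rapid decay of $\hat h$ together with Lemma \ref{scr}: the Plancherel density on $M$ times the extra factor $1+\norm{\lambda}$ coming from the $M$-dimensional slice is dominated by the full $\varbeta(\lambda)$, which is exactly the weight appearing in $\nm(h)$. Summing over the finitely many $M\ne G$ and the $\log$ powers (absorbed by the rapid decay of $\M(\hat h)$) yields $J_{\spec}(\FFF(h))-J_{\di}(\FFF(h))=O(\nm(h))$. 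This is the content of Proposition \ref{mainspec}(1), and it is essentially a repackaging of the estimates of \cite{MR2276771} in the present normalization; the only new point is to phrase the output in terms of $\nm(h)$ so it matches the geometric side.

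For part (2), I would apply part (1) to the translated functions $h_\mu$ for $\mu\in t\Omega$ and integrate. By the lemma, $\nm(h_{1,\mu})=\nm(h_\mu)=O_h(\varbeta(1,\mu))=O_h((1+\norm{\mu})^{d-r})$, but this crude bound is not quite enough; instead I would keep the $\log$ gain from the intertwining weights. The point is that, after translating by $\mu$ and integrating $\mu$ over $t\Omega$, one gets $\int_{t\Omega}O\bigl(\varbeta(\mu)\log(2+\norm{\mu})^{c}\bigr)\,d\mu$ \emph{restricted to the locus where the smaller spectral parameter is bounded}, i.e.\ the effective region of integration is a tube of bounded thickness in the $\vs_M$-directions around $t\partial\Omega$ (this is where the gain $t^{d-1}$ rather than $t^{d}$ comes from, exactly as in the geometric side: a proper Levi contributes one fewer dimension). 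Multiplying the $(d-1)$-dimensional volume of that tube by the Plancherel density $O(t^{d-r})$ restricted appropriately and by the single $\log t$ from the $L$-function bound gives the claimed $O_{h,\Omega}(t^{d-1}\log t)$. The main obstacle is bookkeeping: making precise that the $\mu$-integral of the proper-$M$ contributions really does localize to an $O(t^{d-1})$-volume region rather than filling up the full $t^{d}$-ball — this requires combining the rapid decay of $\hat h$ in the $\vs^M$-directions with Lemma \ref{scr} to trade the extra $\vs_M$-dimension for the gain, and carefully tracking that only one power of $\log t$ survives (as opposed to $\max(n,3)$ powers on the geometric side, which come from the more delicate unipotent weighted orbital integrals).
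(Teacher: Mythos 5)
Your outline for part (1) is essentially the route the paper takes: write $J_{\spec}=J_{\di}+\sum_{M\ne G}\sum_s a_{M,s}J_{M,s}$ (using the fine spectral expansion of M\"uller--Speh), pass to normalized intertwining operators so that the $L$-function input is the log bound on $\tfrac{r'}{r}$, count cuspidal data on $M$ via the Weyl-law-type bound for $M$ (Proposition \ref{locuprbnd}, available by induction on $n$), and then use Lemma \ref{scr} to absorb $\varbeta^M(\lambda^M)(1+\norm\lambda)$ into $\varbeta(\lambda)$. What your sketch leaves implicit but the paper needs explicitly is the uniform boundedness of the operators $\mN_S'(P,\pi,\lambda)_K$ (from \cite[(6.13)]{MR2276771}); without it the expansion in terms of the normalizing factors alone does not suffice.

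For part (2), however, your mechanism is wrong. The gain from $t^d$ to $t^{d-1}$ on the spectral side does \emph{not} come from localizing the $\mu$-integral to a tube of bounded thickness around $t\partial\Omega$; that is the geometric-side mechanism (the divergence-theorem/oscillatory-integral argument applied to the weighted orbital integrals). In the proof of Proposition \ref{mainspec}(1) the proper-Levi term $J_{M,s}(\FFF(h_\mu))$ is bounded in absolute value, pointwise in $\mu$, by roughly $\varbeta^M(\mu^M)\log^{\dim\vs_L}(2+\norm\mu)$, and no sign cancellation in $\mu$ is retained, so there is nothing to localize. The gain comes simply from the fact that the Plancherel-type density $\varbeta^M$ attached to a proper Levi is of strictly lower order: $\varbeta^M(\mu^M)=O((1+\norm\mu)^{d-r-2})$ for $n>2$ (and $\equiv 1$ when $n=2$), instead of the full $(1+\norm\mu)^{d-r}$. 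Integrating this pointwise bound over the \emph{entire} region $t\Omega$ (volume $\asymp t^r$) already gives $O(t^{d-2}(\log t)^{r})$ for $n>2$ and $O(t^{d-1}\log t)$ for $n=2$, both of which are $O(t^{d-1}\log t)$. In particular your assertion that "only one power of $\log t$ survives" is also off: up to $r=n-1$ powers of $\log$ appear, but they are harmless because the accompanying power of $t$ is $d-2$ rather than $d-1$. Correcting the mechanism is essential, since a boundary-tube argument would require a cancellation in $\mu$ that the crude bound from part (1) simply does not provide.
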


Using (\ref{tracefor}) and (\ref{smp}) we infer
\begin{corollary} \label{maintech}
For every $h\in C_c^\infty(\vs)$ supported in $\omega_0$ we have
\begin{enumerate}
\item $J_{\di}(\FFF(h))=O(\nm (h))$.
In particular, for $\mu\in\iii\vs^*$ and $t\ge1$, we have
\[
J_{\di}(\FFF(h_{t,\mu}))=O_h(t^r\varbeta(t,\mu)).
\]
\item
$\int_{t\Omega}J_{\di}(\FFF(h_\mu))\ d\mu=\vol(G(\Q)\bs G(\A)^1)
\int_{t\Omega}\FFF(h_\mu)(1)\ d\mu +
O_{h,\Omega}\left(t^{d-1}(\log t)^{\max(n,3)}\right)$
as $t\rightarrow\infty$.
\end{enumerate}
\end{corollary}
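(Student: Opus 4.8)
The proof of Corollary \ref{maintech} is a direct combination of the trace formula identity \eqref{tracefor} with Propositions \ref{maingeom} and \ref{mainspec}, together with the bound \eqref{smp}. The plan is as follows.

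For part (1), I would start from the identity $J_{\geo}(\FFF(h))=J_{\spec}(\FFF(h))$ valid for $h\in C_c^\infty(\vs)$ supported in the neighborhood $\omega_0$ of Proposition \ref{maingeom}(1). By that Proposition, the left-hand side is $O(\nm(h))$. By Proposition \ref{mainspec}(1), the right-hand side equals $J_{\di}(\FFF(h))+O(\nm(h))$. Equating and moving the error term to the other side gives $J_{\di}(\FFF(h))=O(\nm(h))$, which is the first assertion. The ``in particular'' statement then follows by applying this to $h_{t,\mu}$ in place of $h$ (noting that $h_{t,\mu}$ is still supported in $\omega_0$ for $t\ge1$, since $h_t$ is supported in $t^{-1}\supp(h)\subseteq\supp(h)\subseteq\omega_0$ and multiplication by $e^{-\sprod{\mu}{\cdot}}$ does not change the support) and invoking the bound \eqref{smp}, which gives $\nm(h_{t,\mu})=O_h(t^r\varbeta(t,\mu))$.

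For part (2), I would again use \eqref{tracefor}, now applied to each $h_\mu$ with $\mu\in t\Omega$ and then integrated over $t\Omega$; since $h$ is supported in $\omega_0$ and the $h_\mu$ have the same support as $h$, the trace formula applies throughout. This yields
\[
\int_{t\Omega}J_{\geo}(\FFF(h_\mu))\ d\mu=\int_{t\Omega}J_{\spec}(\FFF(h_\mu))\ d\mu.
\]
By Proposition \ref{maingeom}(2) the left-hand side equals
\[
\vol(G(\Q)\bs G(\A)^1)\int_{t\Omega}\FFF(h_\mu)(1)\ d\mu+O_{h,\Omega}\left(t^{d-1}(\log t)^{\max(n,3)}\right),
\]
while by Proposition \ref{mainspec}(2) the right-hand side equals $\int_{t\Omega}J_{\di}(\FFF(h_\mu))\ d\mu+O_{h,\Omega}(t^{d-1}\log t)$. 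Comparing the two and absorbing the $O(t^{d-1}\log t)$ error into the larger $O(t^{d-1}(\log t)^{\max(n,3)})$ term gives exactly the claimed formula for $\int_{t\Omega}J_{\di}(\FFF(h_\mu))\ d\mu$.

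There is no real obstacle here: all the substantive work is contained in Propositions \ref{maingeom} and \ref{mainspec}, which are proved in \S\ref{geom} and \S\ref{spec}. The only minor points to be careful about are that the test functions $h_{t,\mu}$ and $h_\mu$ remain supported in $\omega_0$ (so that the trace formula and Proposition \ref{maingeom} apply), and that the two distinct error terms $t^{d-1}\log t$ and $t^{d-1}(\log t)^{\max(n,3)}$ combine into the weaker of the two. Both are immediate.
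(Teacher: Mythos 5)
Your proposal is correct and follows exactly the paper's intended argument, which is stated right after Proposition \ref{mainspec}: combine the trace formula identity \eqref{tracefor} with Propositions \ref{maingeom} and \ref{mainspec} and the bound \eqref{smp}. The auxiliary observations about supports (that $h_{t,\mu}$ and $h_\mu$ remain supported in $\omega_0$) and about absorbing the smaller error term are exactly the right small checks to make.
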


In the rest of this section we will prove the main Theorem using 
Corollary \ref{maintech}.
We first rewrite the expression for $J_{\di}$ as follows.
There exist congruence subgroups $\Gamma_i\subseteq G(\R)$, $i=1,\dots,l,$ such that
\[
A_GG(\Q)\bs G(\A)/K_f\cong\bigsqcup_{i=1}^l(\Gamma_i\bs G(\R)^1).
\]
Let $\Lambda_{\di}(\Gamma_i)\subset\vs^*_\C$ be the (symmetrized) spectrum 
of the algebra of invariant differential operators of $\SL(n,\R)$ acting on
the discrete subspace $L^2_{\di}(\Gamma_i\bs X)$ of the Laplace operator on 
$\Gamma_i\bs X$.
For $\lambda\in\Lambda_{\di}(\Gamma_i)$ let 
$m_i(\lambda)$ denote the dimension of the corresponding eigenspace
normalized by the size of the orbit of $\lambda$ under $W$ (cf.~\cite[(3.25)]{MR532745}). 
If $\lambda\notin\Lambda_{\di}(\Gamma_i)$ set $m_i(\lambda)=0$.
Finally, set 
\[
\Lambda_{\di}=\Lambda_{\di}(K_f):=\cup_{i=1}^l\Lambda_{\di}(\Gamma_i),\quad m(\lambda)=
\sum_{i=1}^l m_i(\lambda).
\]
By the well-known relation between automorphic forms and automorphic
representations we get
\[
m(\lambda)=\sum_{\pi:\lambda_{\pi_\infty}=\lambda}\dim\HHH_{\pi}^{K}
\]
for any $\lambda\in\vs^*_{\C}$
and therefore
\begin{equation} \label{newformdis}
J_{\di}(\FFF(h))=\sum_{\lambda\in\Lambda_{\di}(K_f)}m(\lambda)\hat h^W(\lambda).
\end{equation}
For any bounded subset $\Omega\subset\vs^*_\C$ we set 
\[
m(\Omega)=\sum_{\lambda\in\Omega} m(\lambda).
\]
Then
\begin{equation} \label{simp}
\sum_{\substack{\pi\in\Pi_{\di}(G(\A))\\
\lambda_{\pi_\infty}\in t\Omega}}\dim\left(\HHH_\pi^K\right)=m(t\Omega).
\end{equation}

Following \cite{MR532745} we first establish a bound on the 
spectrum near a given point $\mu\in\vs^*$. For this we will only use
the first part of Corollary \ref{maintech}.

\begin{prop} \label{locuprbnd}
We have
\[
\mult\left(B_t(\mu)\right)=O\left(t^r\varbeta(t,\mu)\right)
\]
for all $\mu\in\iii\vs^*$ and $t\ge1$.
More precisely, for $\mu\in\vs^*$ we have
\[
\mult(\{\nu\in\vs^*_{\C}\colon \norm{\Im\nu-\mu}\le t\})=
O\left(t^r\varbeta(t,\mu)\right).
\]
\end{prop}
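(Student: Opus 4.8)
The plan is to deduce this local bound from part (1) of Corollary \ref{maintech} by choosing a test function whose Harish-Chandra transform concentrates on the ball in question, exactly as in \cite[Lemma 3.35]{MR532745}. First I would fix once and for all a function $h\in C_c^\infty(\vs)$ supported in the neighborhood $\omega_0$ provided by Proposition \ref{maingeom}, which is even (i.e.\ $W$-invariant after symmetrization) and satisfies $\hat h\ge 0$ on $\iii\vs^*$ together with $\hat h(\lambda)\ge c>0$ for all $\lambda$ in some fixed ball $B_{\delta}(0)$; such $h$ exists by a standard construction (take $h=g*g^{\vee}$ for a suitable bump function $g$). After symmetrizing we may also assume $\hat h^W\ge 0$ and $\hat h^W\ge c'>0$ on $B_{\delta}(0)$.

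Next I would apply this to the rescaled and translated function $h_{t/\delta,\mu}$, noting that its Harish-Chandra transform is $\hat h((\delta/t)(\cdot-\mu))$, which is therefore $\ge c'>0$ on the ball $B_t(\mu)$. Since $m(\lambda)\ge 0$ for all $\lambda$, the formula (\ref{newformdis}) for $J_{\di}$ gives
\[
c'\,\mult(B_t(\mu))\le\sum_{\lambda\in\Lambda_{\di}(K_f)\cap B_t(\mu)}m(\lambda)\,\hat h^W\bigl((\delta/t)(\lambda-\mu)\bigr)\le J_{\di}(\FFF(h_{t/\delta,\mu})).
\]
By part (1) of Corollary \ref{maintech} the right-hand side is $O_h\bigl((t/\delta)^r\varbeta(t/\delta,\mu)\bigr)$, and since $\delta$ is a fixed constant and $\varbeta(\cdot,\mu)$ is a product of $d-r$ linear factors, $\varbeta(t/\delta,\mu)=O(\varbeta(t,\mu))$. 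This yields $\mult(B_t(\mu))=O(t^r\varbeta(t,\mu))$, which is the first assertion (the restriction $t\ge 1$ is harmless since for bounded $t$ the statement is trivial once we know finiteness, and for small $t$ we may enlarge $t$ to $1$).

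For the more precise statement involving $\nu\in\vs^*_\C$ with $\norm{\Im\nu-\mu}\le t$, I would use the same test function together with the known bound $\norm{\Re\nu}\le\norm{\rho}$ for every $\nu$ in the support of $m$ (this holds because such $\nu$ corresponds to a unitarizable $I(\nu)$, and we recorded $\vs^*_{\untry}\subseteq\cup_w\vs^*_w$ with $\norm{\Re\lambda}\le\norm{\rho}$ on $\vs^*_{\untry}$). Hence any such $\nu$ lies in $B_{t+\norm{\rho}}(\mu)$ (viewing $\mu\in\vs^*\subseteq\vs^*_\C$), so the set in question is contained in $B_{t+\norm{\rho}}(\mu)$ and its $m$-mass is $O\bigl((t+\norm{\rho})^r\varbeta(t+\norm{\rho},\mu)\bigr)=O(t^r\varbeta(t,\mu))$ for $t\ge 1$, again because $\norm{\rho}$ is a fixed constant and enlarging $t$ by a bounded amount changes each of the $d-r$ linear factors of $\varbeta$ by a bounded factor. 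The main (and essentially only) obstacle is arranging the test function cleanly — in particular ensuring positivity of $\hat h^W$ on a fixed ball and controlling how support and transform rescale — but this is routine given the machinery already set up in \S\ref{prelim} and \S\ref{DKV}; the real content has been pushed into Corollary \ref{maintech}(1), which in turn rests on Propositions \ref{maingeom} and \ref{mainspec}.
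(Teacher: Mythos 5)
Your argument breaks down at the step where you pass from the sum over $\Lambda_{\di}(K_f)\cap B_t(\mu)$ to the full $J_{\di}(\FFF(h_{t/\delta,\mu}))$. That inequality requires $\widehat{h_{t/\delta,\mu}}^W(\lambda)\ge 0$ for \emph{every} $\lambda\in\Lambda_{\di}(K_f)$, but $\Lambda_{\di}(K_f)$ is contained in $\vs^*_{\untry}\subseteq\vs^*_\C$, not in $\iii\vs^*$: the discrete spectrum may contain complementary-series parameters with nonzero real part. Arranging $\hat h\ge 0$ on $\iii\vs^*$ alone therefore does not suffice. Even the stronger arrangement $\hat h\ge 0$ on all of $\vs^*_{\untry}$ (which is what \cite[Lemma 6.2]{MR532745} actually provides, and which works at $\mu=0$) does not survive the translation $\lambda\mapsto\lambda-\mu$: the set $\vs^*_{\untry}\subseteq\cup_w\vs^*_w$ is a union of affine pieces, and for $\mu\in\iii\vs_M^*$ with $M\ne M_0$ only the pieces $\vs^*_w$ with $w\in W_M$ are stable under subtraction of $\mu$. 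For $w\notin W_M$ the translated point $\lambda-\mu$ leaves the region where positivity of $\hat h$ is guaranteed, so $\widehat{h_{t,\mu}}(\lambda)$ can be of arbitrary sign or even complex. Thus your chain of inequalities fails as soon as there is complementary spectrum outside $\vs^*_{\hm,\subseteq M}$.

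This is precisely why the paper's proof is an induction on the co-rank of $M$. The base case $M=G$ (i.e.\ $\mu=0$) works exactly as you propose, since there $\hat h$ is nonnegative on all of $\vs^*_{\untry}$. For $\mu\in\iii\vs_M^*$ with $M\subsetneq G$ the paper only uses positivity on $\vs^*_{\hm,\subseteq M}$, and writes
$\mult(B_t(\mu))=\mult(B_t(\mu)\cap\vs^*_{\hm,\subseteq M})+\mult(B_t(\mu)\cap\vs^*_{\untry,\not\subseteq M})$.
The first term is controlled by $\abs{J_{\di}(\FFF(h_{t,\mu}))}$ \emph{up to} the contribution of $\vs^*_{\untry,\not\subseteq M}$, and both the correction term and the second term are then bounded by applying the induction hypothesis at a lower-rank centre $\mu_{M'}$ (where $M'$ is generated by $M$ and $M_w$), using the geometric observation that $B_t(\mu)\cap\vs^*_w\subseteq B_{ct}(\mu_{M'})\cap\vs^*_w$. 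Your proposal omits the complementary-spectrum analysis entirely, so the reduction to Corollary \ref{maintech}(1) is not valid as stated. Your reduction of the second assertion to the first (via $\norm{\Re\nu}\le\norm{\rho}$) is fine and agrees with the paper.
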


\begin{proof}
First note that the second part follows from the first one since $\Re\lambda$
is bounded if $m(\lambda)>0$.
We will prove the first part for $\mu\in\iii\vs_M^*$ by induction on the 
co-rank of $M$.
(The case $M=M_0$ is the statement of the Proposition.)
For the case $M=G$, we have $\mu=0$ and we argue as in 
\cite[Proposition 6.4]{MR532745}.
Fix $h\in C_c^\infty(A)^W$ of small support such that $\hat h\ge0$ on
$\vs^*_{\untry}$, and $\abs{\hat h}\ge1$ on $B(0)$ ([ibid., Lemma 6.2]).
Then $\mult(B_t(0))\le J_{\di}(\FFF(h_t))$, and
$J_{\di}(\FFF(h_t))=O(t^d)$ by Corollary \ref{maintech}.
We deduce the case $M=G$.

For the induction step we proceed as in the proof of [ibid., Proposition
7.1].\footnote{The formulation of Proposition \ref{locuprbnd} is slightly 
stronger
than \cite[Theorem 7.3]{MR532745}. This makes the induction step
a bit smoother.}
Let $h$ be as before. Then $\widehat{h_{t,\mu}}\ge 0$ on $\vs^*_{\hm,\subseteq M}$
and $\abs{\widehat{h_{t,\mu}}}\ge 1$ on $B_t(\mu)$.
Thus,
\[
\begin{split}
\abs{J_{\di}(\FFF(h_{t,\mu}))}=\abs{\sum_{\lambda\in\Lambda_{\di}}m(\lambda)
\widehat{h_{t,\mu}}(\lambda-\mu)}
&\ge\mult(B_t(\mu)\cap\vs^*_{\hm,\subseteq M})\\
&-\sum_{\lambda\in\vs^*_{\untry,\not\subseteq M}}m(\lambda)
\abs{\widehat{h_{t,\mu}}(\lambda-\mu)}.
\end{split}
\]
Hence,
\[
\begin{split} 
\mult(B_t(\mu))&=\mult(B_t(\mu)\cap \vs^*_{\hm,\subseteq M})+
\mult(B_t(\mu)\cap \vs^*_{\untry,\not\subseteq M})\\
&\le\abs{J_{\di}(\FFF(h_{t,\mu}))}
+\sum_{\lambda\in\vs^*_{\untry,\not\subseteq M}}m(\lambda)
\abs{\widehat{h_{t,\mu}}(\lambda-\mu)}
+\mult(B_t(\mu)\cap \vs^*_{\untry,\not\subseteq M}).
\end{split}
\]
We will bound each of the terms on the right-hand side separately.
For the first term we use Corollary \ref{maintech}.
To bound the other two terms consider $\vs^*_w$ with $w\notin W_M$,
or equivalently, $M_w\not\subseteq M$. We have
\[
\vs^*_w\cap\iii\vs_M^*=\iii(\vs_{M_w}^*\cap\vs_M^*)=\iii\vs_{M'}^*
\]
where $M'\in\levis$ is generated by $M$ and $M_w$.
Therefore the kernels of the maps $(\nu,\mu)\mapsto\nu-\mu,\nu-\mu_{M'}$ 
from $\vs^*_w\times\iii\vs_M^*$
to $\vs^*_{\C}$ coincide.
It follows that $\norm{\nu-\mu_{M'}}+\norm{\mu^{M'}}=O(\norm{\nu-\mu})$ for 
$\nu\in\vs^*_w$
and $\mu\in\iii\vs_M^*$. In particular, there exists $c$
such that $B_t(\mu)\cap \vs^*_w=\emptyset$ unless
$\norm{\mu^{M'}}\le ct$, and in this case
\[
B_t(\mu)\cap \vs^*_w\subseteq B_{ct}(\mu_{M'})\cap \vs^*_w.
\]
Therefore, either $\mult(B_t(\mu)\cap \vs^*_w)=0$, or 
\[
\mult(B_t(\mu)\cap \vs^*_w)\le\mult(B_{ct}(\mu_{M'}))=
O(t^r\varbeta(t,\mu_{M'}))=O(t^r\varbeta(t,\mu))
\]
by the induction hypothesis and (\ref{trivbnd}).
Since $\vs^*_{\untry,\not\subseteq M}\subseteq\cup_{w\notin W_M}\vs^*_w$
we obtain
\[
\mult(B_t(\mu)\cap \vs^*_{\untry,\not\subseteq M})=O(t^r\varbeta(t,\mu)).
\]
Therefore for any $N>0$, $\sum_{\lambda\in\vs^*_{\untry,\not\subseteq M}}m(\lambda)
\abs{\hat h(t^{-1}(\lambda-\mu))}$ is bounded by a constant multiple of
\[
\sum_{k=1}^\infty\mult(B_{tk}(\mu)\cap \vs^*_{\untry,\not\subseteq M})k^{-N}=
O\left(\sum_{k=1}^\infty k^{-N+r}t^r\varbeta(kt,\mu)\right).
\]
For $N$ sufficiently large, this is $O(t^r\varbeta(t,\mu))$.
This completes the induction step and the proof of the Proposition.
\end{proof}

\begin{corollary} \label{compl}
$\mult(B_t(0)\setminus\iii\vs^*)=O(t^{d-2})$.
\end{corollary}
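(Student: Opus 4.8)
The plan is to deduce Corollary \ref{compl} from Proposition \ref{locuprbnd} by observing that the complementary spectrum $B_t(0)\setminus\iii\vs^*$ lives, up to a bounded shift in the real direction, inside a \emph{lower-dimensional} piece of the unitary dual, namely $\cup_{w\ne1}\vs^*_w$. Recall from \S\ref{prelim} that $\Lambda_{\di}(K_f)\subseteq\vs^*_{\untry}\subseteq\cup_{w\in W}\vs^*_w$, and that a $\lambda$ with $m(\lambda)>0$ and $\Im\lambda\ne0$ must lie in $\vs^*_w$ for some $w\ne1$, since $\vs^*_1=\iii\vs_{M_1}^*=\iii\vs^*$ is precisely the tempered (real-Plancherel) part. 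Hence
\[
\mult(B_t(0)\setminus\iii\vs^*)\le\sum_{w\ne1}\mult(B_t(0)\cap\vs^*_w).
\]

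For each fixed $w\ne1$, I would bound $\mult(B_t(0)\cap\vs^*_w)$ using exactly the mechanism already employed in the induction step of Proposition \ref{locuprbnd}. Since $\vs^*_w=\vs_{w,-1}^*+\iii\vs_{M_w}^*$ and $M_w\ne M_0$ (because $w\ne1$, so $M_w$ properly contains $M_0$; here $M_w$ is a proper-co-rank Levi), one can project: there is a constant $c$ so that $B_t(0)\cap\vs^*_w\subseteq B_{ct}(0)\cap\vs^*_{\hm,\subseteq M_w}$ for a Levi $M_w$ with $M_w\ne M_0$. Applying Proposition \ref{locuprbnd} (which gives the bound not just at $M=M_0$ but, via its proof, for all semi-standard $M$ — or more directly, applying the first displayed assertion of that Proposition at $\mu=0$ together with the refinement that the bound restricted to $\vs^*_{\hm,\subseteq M}$ improves by a factor of $(1+\norm\lambda)^{\mathrm{co\text-rk}\,M}$) yields
\[
\mult(B_t(0)\cap\vs^*_w)=O\!\left(t^r\,\varbeta^{M_w}(0,0)\cdot t^{\,?}\right).
\]
More cleanly: by Lemma \ref{scr}, $\varbeta(\lambda)$ dominates $\varbeta^{M}(\lambda^M)(1+\norm\lambda)$ for $M\ne G$, and here the relevant Levi $M_w\supsetneq M_0$ satisfies at least co-rank one, so one gains a factor of roughly $t^{-1}$ relative to the full count $\mult(B_t(0))=O(t^d)$. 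A short case-check shows $M_w$ always has co-rank $\ge2$ when $w\ne1$ — indeed $M_w\ne M_0$ forces some block of size $\ge2$, and $\dim\vs^{M_w}\ge\dim\vs^{\GL(2)}=1$ is not quite enough, so one must check that $\vs_{w,-1}^*$ being nonzero while $w\ne1$ forces $\dim\vs_{M_w}=\dim\vs-\dim\vs^{M_w}\le r-1$ with the two extra dimensions coming from pairing the $c$-function bound with the genuinely lower-dimensional $\iii\vs_{M_w}^*$. Summing $O(t^{d-2})$ over the finitely many $w\ne1$ gives the claim.

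The technical heart — and the step I expect to require the most care — is pinning down the exact power of $t$ saved: one must confirm that for \emph{every} $w\ne1$ the intersection $B_t(0)\cap\vs^*_w$ is genuinely two dimensions smaller in ``count'' than $B_t(0)$ itself, i.e.\ that $r+(\text{degree of }\varbeta\text{ restricted to }\vs_{M_w}^*)\le d-2$. This is where one uses that $\vs^*_w$ imposes the real part to lie in the \emph{bounded} region $\vs^*_{\untry}$ (saving nothing by itself) \emph{and} that its imaginary part is confined to $\iii\vs_{M_w}^*$, a subspace of dimension $\dim\vs_{M_w}\le r-1$, which, combined with the Plancherel-measure growth estimate \eqref{logderbnd}/\eqref{triv1}, forces $\mult(B_t(0)\cap\vs^*_w)=O(t^{d-2})$. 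The smallest case $n=2$ is degenerate ($\vs^*_w$ for the nontrivial $w$ is just the interval $(-\rho,\rho)$ on the real axis, a bounded set, giving $O(1)=O(t^{d-2})$ since $d=2$), and for $n\ge3$ the inductive bound from Proposition \ref{locuprbnd} applied to the proper Levi $M_w$ does the rest. I would write the argument uniformly by invoking Proposition \ref{locuprbnd} at $\mu=0$ in the guise ``$\mult(B_t(0)\cap\vs^*_w)\le\mult(B_{ct}(0_{M_w}))=O(t^r\varbeta(t,0))$ truncated to the subspace $\iii\vs_{M_w}^*$'', extracting the $t^{d-2}$ from $r+\deg_{\vs_{M_w}^*}\varbeta\le d-2$, which is the one inequality needing explicit verification.
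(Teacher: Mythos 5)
Your decomposition of the complementary spectrum into the pieces $B_t(0)\cap\vs^*_w$, $w\ne1$, together with the observation that $\Im\lambda\in\iii\vs_{M_w}^*$ with $M_w\ne M_0$ (and $\Re\lambda$ bounded), is exactly the paper's approach and is correct. However, the final bookkeeping you flag as ``the one inequality needing explicit verification'' — namely $r+\deg_{\vs_{M_w}^*}\varbeta\le d-2$ — is false, as is the intermediate claim that $w\ne1$ forces $M_w$ to have co-rank $\ge2$. For $w$ a simple transposition, $M_w$ is a \emph{maximal} proper Levi with $\dim\vs^{M_w}=1$; and for such $M_w$ one has $\deg_{\vs_{M_w}^*}\varbeta=\binom{n}{2}-1=d-r-1$, so $r+\deg_{\vs_{M_w}^*}\varbeta=d-1$, not $d-2$. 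The proposed move of ``invoking Proposition \ref{locuprbnd} at $\mu=0$, scale $t$, truncated to $\iii\vs_{M_w}^*$'' also has no rigorous meaning: $O(t^r\varbeta(t,0))=O(t^d)$ counts everything in $B_t(0)$, and there is no device for restricting a global count of that form to a subspace after the fact.

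The correct accounting extracts the two powers of $t$ from \emph{two separate sources}, which is what the paper's covering argument does. Cover $B_t(0)\cap\iii\vs_{M_w}^*+C$ (with $C$ a fixed compact set absorbing the bounded real parts) by balls of fixed radius centered at points $\mu\in B_t(0)\cap\iii\vs_{M_w}^*$; since $\dim\vs_{M_w}\le r-1$ for $M_w\ne M_0$, one needs only $O(t^{r-1})$ such balls. Then apply Proposition \ref{locuprbnd} at bounded scale: $\mult(B_1(\mu))=O(\varbeta(1,\mu))$, and for $\mu\in\iii\vs_{M_w}^*$ the within-block factors of $\varbeta$ are constant, so $\varbeta(1,\mu)=O(\norm{\mu}^{d-r-1})=O(t^{d-r-1})$. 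Multiplying gives $O\bigl(t^{(r-1)+(d-r-1)}\bigr)=O(t^{d-2})$. One power of $t$ is saved because the imaginary part lives in a subspace of dimension at most $r-1$, and a second because a factor of $\varbeta$ degenerates on that subspace; you were trying to squeeze both savings out of the degree of $\varbeta$ alone, which led you to the (false) co-rank-$\ge2$ claim. Incidentally, Lemma \ref{scr} is not needed here; the relevant estimate on $\varbeta$ restricted to $\iii\vs_{M_w}^*$ is immediate from the product formula \eqref{plnchappr}.
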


Indeed suppose that $\lambda\in\vs^*_{\untry}\cap\vs_w^*$
with $w\ne1$.
Then $\Re\lambda$ is bounded while $\Im\lambda\in\vs_{M_w}^*$ and
$M_w\ne M_0$. Thus,
\[
\mult(B_t(0)\setminus\iii\vs^*)\le\sum_{M_0\neq M\in\levis}
\mult(B_t(0)\cap {\iii\vs_M^*}+C)
\]
for some compact set $C$ of $\vs^*_\C$.
Each of the sets $B_t(0)\cap\iii\vs_M^*+C$, $M\ne M_0$, is covered by 
$O(t^{r-1})$ balls of bounded radius whose centers are in 
$B_t(0)\cap\iii\vs_M^*$.
The corollary follows from Proposition \ref{locuprbnd}
since $\varbeta(R,\mu)=O_R(\norm{\mu}^{d-r-1})$ for $\mu\in\iii\vs_M^*$.

We are now ready to prove Theorem \ref{mainthm}.
Let $\Omega\subseteq\iii\vs^*$ be a bounded domain with piecewise $C^2$
boundary. Fix $h\in C_c^\infty(\vs)^W$ with $h(0)=1$.
Using (\ref{newformdis}) and the second part of Corollary \ref{maintech} we get
\[
\sum_{\lambda\in\Lambda_{\di}}m(\lambda)\int_{t\Omega}
\hat h(\lambda-\mu)\ d\mu
=\vol(G(\Q)\bs G(\A)^1)
\int_{t\Omega}\FFF(h_\mu)(1)\ d\mu
+O\left(t^{d-1}(\log t)^{\max(n,3)}\right).
\]
as $t\rightarrow\infty$. By Plancherel inversion
\[
\int_{t\Omega}\FFF(h_\mu)(1)\ d\mu=\frac{1}{\card{W}\vol(K_f)}
\int_{t\Omega}\int_{\iii\vs^*}
\hat h(\lambda-\mu)\beta(\lambda)\,d\lambda\,d\mu.
\]
Note that $\vol(G(\Q)\bs G(\A)^1)\vol(K_f)^{-1}=\vol(G(\Q)\bs G(\A)^1/K_f)$.
As in \cite[\S8]{MR532745} we will show that
\begin{gather}
\label{error1} \int_{t\Omega}\int_{\iii\vs^*}
\hat h(\lambda-\mu)\beta(\lambda)\,d\lambda\,d\mu
-\int_{t\Omega}\beta(\lambda)\,d\lambda=O(t^{d-1}),\\
\label{error2} \sum_{\lambda\in\Lambda_{\di}}m(\lambda)\int_{t\Omega}
\hat h(\lambda-\mu)\ d\mu-\mult(t\Omega)=O(t^{d-1}).
\end{gather}
From the description of the residual spectrum (\cite{MR1026752}) it follows
that $\Lambda_{\di}\setminus\Lambda_{\cu}\subseteq
\vs^*_{\C}\setminus\iii\vs^*$. Therefore, in view of Corollary \ref{compl}
we can replace $\Lambda_{\di}$ by $\Lambda_{\cu}$ in (\ref{error2}).
Altogether, this gives Theorem \ref{mainthm} by (\ref{simp}).

To prove (\ref{error1}) we can write its left-hand side as
\begin{equation}\label{error3}
\int_{t\Omega'}\int_{t\Omega}\hat h(\lambda-\mu)\beta(\lambda)\,d\mu\ d\lambda
-\int_{t\Omega}\int_{t\Omega'}\hat h(\lambda-\mu)\beta(\lambda)\,d\mu\ d\lambda
\end{equation}
where $\Omega'$ is the complement of $\Omega$ in $\iii\vs^*$
(cf.~\cite[p.~84]{MR532745}). For $\kappa\ge 0$ let 
\[
\partial_\kappa(t\Omega)=
\big\{\nu\in\iii\vs^*:\inf_{\mu\in t\partial\Omega}\norm{\nu-\mu}\le\kappa\big\}.
\]
By separating the integrals in (\ref{error3}) into shells 
$k-1\le\norm{\lambda-\mu}<k$, $k\in\N$, we can bound the sum by
\[
\sum_{k=1}^\infty
\int_{\partial_k(t\Omega)}\int_{(B_k(\lambda)\setminus
B_{k-1}(\lambda))\cap\iii\vs^*}
\abs{\hat h(\lambda-\mu)}\beta(\lambda)\,d\mu\ d\lambda.
\]
Using (\ref{plnchbnd}) and the rapid decay of $\hat h$,
for any $N>0$ this is bounded by a constant multiple of
\[
\sum_{k=1}^\infty k^{-N}k^r\vol(\partial_k(t\Omega))(k+t)^{d-r}.
\]
Since $\Omega$ has a piecewise $C^2$ boundary we can cover $\partial_k(t\Omega)$
by $O(t^{r-1})$ balls of radius $k$ and (\ref{error1}) follows.
Similarly, we write the left-hand side of (\ref{error2}) as
\begin{equation}\label{error4}
\begin{split}
-\sum_{\lambda\in\Lambda_{\di},\lambda\in t\Omega}m(\lambda)
\int_{t\Omega'}\hat h(\lambda-\mu)\,d\mu
&+\sum_{\lambda\in\Lambda_{\di},\lambda\in t\Omega'}m(\lambda)
\int_{t\Omega}\hat h(\lambda-\mu)\,d\mu\\
&+\sum_{\lambda\in\Lambda_{\di},\lambda\notin\iii\vs^*}m(\lambda)
\int_{t\Omega}\hat h(\lambda-\mu)\,d\mu.
\end{split}
\end{equation}
We proceed as above and divide the domains $t\Omega\times t\Omega'$,
$t\Omega'\times t\Omega$ and $(\vs_\C^*\setminus \iii\vs^*)\times
t\Omega$ into
shells $k-1<\norm{\lambda-\mu}<k$, $k\in\N$, and estimate the 
corresponding sum-integral on each shell. In this way we bound 
(\ref{error4})  by
\[
\sum_{k=1}^\infty k^{-N}(\mult(\partial_k(t\Omega))+
\mult(B_{R(k+t)}(0)\setminus\iii\vs^*))k^r
\]
where $R$ is such that $\Omega\subset B_R(0)$.
We can now use the condition on $\partial\Omega$,
Proposition \ref{locuprbnd} and Corollary \ref{compl}
to infer (\ref{error2}).

Next we show that Theorem \ref{mainthm} implies Corollary \ref{maincor}.
Let $\Omega\subset\iii\vs^*$ be as in Theorem \ref{mainthm}.
If $K_f$ equals the principal congruence subgroup $K_f(N)$ then it follows
from (\ref{Kf}) that
\[
\sum_{\substack{\pi\in\Pi_{\cu}(G(\A))\\
\lambda_{\pi_\infty}\in t\Omega}}\dim\left(\HHH_\pi^K\right)=
\varphi(N)\sum_{\lambda\in\Lambda_{\cu}(\Gamma(N)),\lambda\in t\Omega}m(\lambda).
\]
and
\[
\vol(G(\Q)\bs G(\A)^1/K_f(N))=\varphi(N)\vol(\Gamma(N)\bs X)
\]
where $\varphi(N)=\#[(\Z/N\Z)^*]$.
Thus, Theorem \ref{mainthm} in this case amounts to Corollary \ref{maincor}.

Corollary \ref{maincor2} is derived from Corollary \ref{maincor}
exactly as in \cite[p.~87-88]{MR532745}.
Note that the main term in Corollary \ref{maincor} differs from that of
\cite[Theorem 8.8]{MR532745} by a certain factor $\sigma(G)$ ([ibid., (8.12)]).
This is because of the different normalization of measures on $X$ and $\iii\vs^*$.
This has no effect on Corollary \ref{maincor2} where (as in [ibid., (8.33)]) it is assumed that
both the volume and
Laplacian are with respect to the same Riemannian structure.

\begin{remark} \label{rem10}
All the Propositions in this section carry over, in the obvious way,
to groups which are products of $\GL(m)$'s.
The proof of Proposition \ref{locuprbnd} depends on the validity
of Propositions \ref{maingeom} and \ref{mainspec}.
Proposition \ref{maingeom} will be proved in the next section.
We will prove Proposition \ref{mainspec} in \S\ref{spec} below by induction on $n$. The induction hypothesis, together with the validity
of Proposition \ref{maingeom} guarantees the validity of Proposition \ref{locuprbnd}
for any proper Levi subgroup of $G$.
\end{remark}

\section{The geometric side} \label{geom}
\setcounter{equation}{0}

In this section we study the geometric side $J_{\geo}$ of the trace formula and
prove Proposition \ref{maingeom}.
By the coarse geometric expansion we can write $J_{\geo}$ as a sum of 
distributions
\[
J_{\geo}(f)=\sum_{\ho\in\cO}J_{\ho}(f),\quad f\in C^\infty_c(G(\A)^1),
\]
parameterized by semisimple conjugacy classes of $G(\Q)$.
The distribution $J_{\ho}(f)$ is the value at $T=0$
of the polynomials $J_{\ho}^T(f)$ defined in \cite{MR518111}.
In particular, following Arthur, we write $J_{\unip}(f)$ for the 
contribution corresponding to the class of $\{1\}$.

The first step is to show that it suffices to deal with $J_{\unip}(f)$.

\begin{lemma}\label{unipfree}
Let $K_f(N)$ be a principal congruence subgroup of level $N\ge3$.
There exists a bi-$K_\infty$-invariant compact neighborhood $\omega$ of $K_f(N)$ 
in $G(\A)^1$ which does not contain any $x\in G(\A)$ whose semisimple part is
conjugate to a non-unipotent element of $G(\Q)$.
\end{lemma}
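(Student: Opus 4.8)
The plan is to reduce the statement to a statement about $\Q$-points and then use that $\Gamma(N)$ is neat for $N\ge 3$. First I would fix a bi-$K_\infty$-invariant compact neighborhood $\omega$ of $K_f(N)$ in $G(\A)^1$, say of the form $\omega = U \cdot K_f(N)$ where $U$ is a small bi-$K_\infty$-invariant compact neighborhood of the identity in $A_G\bs G(\R)$ (intersected with $G(\A)^1$). Shrinking $U$, I may assume $U$ is contained in any prescribed neighborhood of $1$. The claim is that if $U$ is small enough, no element $x\in\omega$ has semisimple part $x_s$ conjugate in $G(\Q)$ to a non-unipotent element of $G(\Q)$.

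The key point is the structure of $\omega$ at the finite places: for every prime $p$, the $p$-component $x_p$ of any $x\in\omega$ lies in $K_p(N)$, i.e.\ $x_p\equiv 1\pmod{p^{r_p}}$ (and $x_p\in G(\Z_p)$ for $p\nmid N$). Now suppose $x\in\omega$ and $x_s$ is conjugate to some $\gamma\in G(\Q)$. Since semisimplicity and conjugacy are preserved under the embedding $G(\Q)\hookrightarrow G(\Q_p)$, and $x_p\in K_p(N)$ for all $p$, the characteristic polynomial of $\gamma$ — which equals that of $x_s$, hence that of $x_p$ for every $p$ — has $p$-integral coefficients for all $p$, so $\gamma$ has integral characteristic polynomial; moreover, modulo $p^{r_p}$, the characteristic polynomial of $\gamma$ is congruent to $(T-1)^n$. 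Since this holds for all $p\mid N$ with $N\ge 3$, the eigenvalues of $\gamma$ are algebraic integers which are units and which are $\equiv 1$ in a strong enough sense that, by the neatness of $\Gamma(N)$ (Borel, \cite{MR0244260}), the only possibility is that all eigenvalues equal $1$; that is, $\gamma$ is unipotent. Concretely: the subgroup of $\overline{\Q}^*$ generated by all eigenvalues of all such $\gamma$ is, by neatness, torsion-free, while each eigenvalue of $\gamma$ is a root of a monic integral polynomial congruent to $(T-1)^n\pmod{N}$ and all its Archimedean absolute values are controlled — in fact all conjugates lie in the compact set $\det^{-1}(\text{unit circle})\cdot(\text{small ball})$ coming from $U\subseteq G(\A)^1$ with $U$ small — forcing the eigenvalues to be roots of unity, hence (by torsion-freeness) equal to $1$.

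In more detail the Archimedean input is the following: because $x\in G(\A)^1$ and $x_\infty\in A_G\bs G(\R)$ lies in the fixed compact set $U$, the eigenvalues of $x_\infty$, and hence of $x_s$ and of $\gamma$, are bounded in absolute value, as are those of $x_s^{-1}$; combined with the fact that the $\gamma$ range over a set of bounded (integral) characteristic polynomials congruent to $(T-1)^n\pmod N$, there are only finitely many possible characteristic polynomials, and for $U$ small enough the only one compatible with $x_\infty$ being close to the identity is $(T-1)^n$. Here is where I would invoke $N\ge 3$: an algebraic integer $\zeta$, all of whose Archimedean conjugates are close to $1$ and which is $\equiv 1\pmod{\pg}$ for a prime $\pg$ over a prime $p\mid N$ with $p\ge 3$ (or over $4$), must be a root of unity that is $\equiv 1$ modulo such a prime, hence $\zeta=1$; this is exactly the Minkowski-type argument underlying the neatness of $\Gamma(N)$.

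The main obstacle, and the only genuinely delicate point, is making the Archimedean smallness precise \emph{uniformly}: one must choose the neighborhood $U$ of $1$ in $A_G\bs G(\R)$ first, \emph{before} knowing which $\gamma$ occur, so the argument has to run as: the set of integral polynomials congruent to $(T-1)^n$ modulo $N$ with all roots in a fixed compact annulus is finite (Northcott/Kronecker), their roots are all roots of unity by the congruence condition plus $N\ge 3$, hence all equal $1$ by neatness, so the characteristic polynomial is exactly $(T-1)^n$; no shrinking of $U$ is even needed beyond ensuring $x_\infty$ stays in a fixed compact set, which is automatic from $x\in G(\A)^1$ and the product structure of $\omega$. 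I would therefore phrase the proof by: (i) setting up $\omega = U\cdot K_f(N)$; (ii) reducing to the characteristic polynomial of $\gamma$ lying in a fixed finite set via integrality, the mod-$N$ congruence, and boundedness of eigenvalues; (iii) using the congruence together with $N\ge 3$ and torsion-freeness (neatness) to force that finite set to consist only of $(T-1)^n$; (iv) concluding $\gamma$ is unipotent, contradicting the hypothesis, so no such $x$ exists.
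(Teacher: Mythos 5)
Your overall strategy is the right one --- reduce to the characteristic polynomial of $\gamma$, observe it is monic and integral, that it is $\equiv (T-1)^n\pmod N$, and that the Archimedean data constrain its roots, then invoke Serre's/Minkowski's argument to conclude the roots are all $1$. This is in fact exactly the skeleton of the paper's proof (which first shows $q(\omega)\cap\Q^n=q(K_\infty K_f)\cap\Q^n$ using discreteness of $\Q^n$ in $\A^n$, and then analyzes the finite set $q(K_\infty K_f)\cap\Q^n$). However, two steps in your argument as written do not go through.

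First, the eigenvalues of the Archimedean component of $x\in\omega$ are \emph{not} close to $1$. Since $\omega$ must be bi-$K_\infty$-invariant and is a neighborhood of $K_f(N)$, its Archimedean part necessarily contains all of $K_\infty=\mathrm{O}(n)$, whose elements have eigenvalues filling out the whole unit circle. So the correct Archimedean constraint is that the roots have absolute value close to (or exactly) $1$, not that they are close to $1$. Your phrasing ``all of whose Archimedean conjugates are close to $1$'' and the subsequent step ``hence a root of unity $\equiv 1$ modulo such a prime, hence $\zeta=1$'' conflates the two sources of rigidity (the Archimedean condition forcing roots of unity, and the $p$-adic congruence forcing the root of unity to be $1$).

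Second, and more seriously, the chain ``integral, $\equiv(T-1)^n\pmod N$, roots in a fixed compact annulus $\Rightarrow$ (by Northcott) finitely many $\Rightarrow$ roots of unity'' has a genuine gap. Northcott gives finiteness, but finiteness plus the congruence does \emph{not} imply the roots are roots of unity: for instance a monic integral polynomial of the form $(T-1)^{n-2}\bigl(T^2-(2+N)T+1\bigr)$ is $\equiv(T-1)^n\pmod N$, has roots which are algebraic integers and units, all lying in a fixed compact annulus, yet is not a product of cyclotomic factors. To apply Kronecker's theorem (which is the real engine here, not merely Northcott), you need all Archimedean absolute values of the roots to equal $1$ \emph{exactly}. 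The paper achieves this by using the discreteness of $\Q^n$ in $\A^n$: a rational point in a sufficiently small neighborhood of the compact set $q(K_\infty K_f(N))$ must already lie in $q(K_\infty K_f(N))$, and its Archimedean component is then exactly the characteristic polynomial of an orthogonal matrix, whose roots lie exactly on the unit circle. Your claim that ``no shrinking of $U$ is even needed'' is therefore false; some shrinking (equivalently, the discreteness argument) is essential. One could repair your approach by shrinking $U$ so as to exclude the finitely many characteristic polynomials (produced by Northcott over the larger $U$) having a root off the unit circle, but that is not what you wrote, and as written the proof does not establish the crucial ``roots of unity'' step.
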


\begin{proof}
Taking the coefficients of the characteristic polynomial gives rise to a conjugation
invariant algebraic map from $G$ to the affine $n$-space,
and therefore to a continuous map $q:G(\A)\to\A^n$.
Let $T=q(K_\infty\cdot K_f)$.
The discreteness of $\Q^n$ implies that there exists a neighborhood $\omega$
of $K_\infty\cdot K_f$ such that 
\[
q(\omega)\cap\Q^n\subseteq T\cap\Q^n.
\]
By passing to a smaller neighborhood we can assume that $\omega$ is bi-$K_\infty$-invariant and compact.

To prove the Lemma it suffices to show that
\begin{equation} \label{TQ}
T\cap\Q^n=\{q(1)\}.
\end{equation}
Indeed, if the semi-simple part of $x\in\omega$ is conjugate to 
$\gamma\in G(\Q)$
then $q(x)=q(\gamma)\in q(\omega)\cap\Q^n\subseteq T\cap\Q^n$ and therefore 
by (\ref{TQ}) $q(\gamma)=q(1)$
so that $\gamma$ is unipotent.

To show (\ref{TQ}), let $f$ be a monic polynomial over $\Q$ with coefficients 
in $T$.
Since $T_p\subseteq\Z_p^n$ for all $p$, we infer that $f$ has integral 
coefficients,
so that the roots of $f$ are algebraic integers. Moreover, by the condition
at $\infty$, the roots of $f$ have absolute value $1$.
It follows from Dirichlet's unit theorem, that the roots of $f$ are roots of 
unity.
Let $g\in K_p(N)$. Then $g-1\in NM_n(\Z_p)$ and therefore the roots of the
characteristic polynomial of $g$ are congruent to $1$ modulo $N$ in the ring
of integers of the algebraic closure of $\Q_p$.
Thus, the roots of $f$ are roots of unity which are congruent to $1$ modulo $N$
in the ring of integers of the corresponding cyclotomic field.
By Serre's lemma (\cite[p.~207]{MR0282985}) they are all $1$, and (\ref{TQ}) 
follows.
\end{proof}

\begin{corollary}\label{c3.2}
Let $\omega$ be as in Lemma \ref{unipfree} and suppose that 
$f\in C_c^\infty(G(\A)^1)$ is supported 
in $\omega$. Then 
\[
J_{\geo}(f)=J_{\unip}(f).
\]
\end{corollary}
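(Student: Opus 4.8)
The plan is to invoke the coarse geometric expansion $J_{\geo}(f)=\sum_{\ho\in\cO}J_{\ho}(f)$ and to show that every term indexed by a class $\ho\in\cO$ other than the class of $\{1\}$ contributes nothing when $f$ is supported in $\omega$. Recall that $J_{\ho}(f)$ is built (via the polynomials $J_{\ho}^T$ of \cite{MR518111}) out of integrals involving the kernel $k_{\ho}(x,x)=\sum_{\gamma}f(x^{-1}\gamma x)$, where $\gamma$ ranges over those elements of $G(\Q)$ whose semisimple part lies in the conjugacy class $\ho$. Thus if a given $\ho$ contributes a nonzero term, there must exist $x\in G(\A)^1$ and $\gamma\in G(\Q)$ with semisimple part in $\ho$ such that $x^{-1}\gamma x\in\supp(f)\subseteq\omega$. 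The strategy is simply to rule this out for non-unipotent $\ho$.

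First I would observe that if $x^{-1}\gamma x\in\omega$ then $x^{-1}\gamma x$ is an element of $G(\A)$ lying in $\omega$ whose semisimple part is conjugate (in $G(\A)$, hence certainly sharing the same characteristic polynomial) to an element of $G(\Q)$, namely to the semisimple part $\gamma_s$ of $\gamma$. But Lemma \ref{unipfree} produces a bi-$K_\infty$-invariant compact neighborhood $\omega$ of $K_f(N)$ — and by taking $f$ supported in this $\omega$, which is what the Corollary stipulates — containing no element of $G(\A)$ whose semisimple part is conjugate to a non-unipotent element of $G(\Q)$. Hence $\gamma_s$ must be unipotent, i.e.\ $\gamma_s=1$, which forces $\ho$ to be the class of $\{1\}$. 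Therefore $J_{\ho}(f)=0$ for every $\ho$ other than the unipotent class, and the expansion collapses to $J_{\geo}(f)=J_{\unip}(f)$.

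The only point requiring a little care is the compatibility of the two notions of conjugacy: an element of $G(\Q)$ can be $G(\A)$-conjugate into $\omega$ without being $G(\Q)$-conjugate into anything in particular, but for the argument above all that matters is that $G(\A)$-conjugation preserves the characteristic polynomial, and hence the property ``semisimple part is unipotent.'' One should also note that the semisimple part $\gamma_s$ of $\gamma\in G(\Q)$ lies in $G(\Q)$ (the Jordan decomposition is rational over $\Q$ since $G=\GL(n)$ and $\Q$ is perfect), so the hypothesis of Lemma \ref{unipfree} applies verbatim. I do not anticipate any genuine obstacle here — the substance of the matter is entirely contained in Lemma \ref{unipfree}, and Corollary \ref{c3.2} is a direct bookkeeping consequence of it together with the structure of Arthur's coarse expansion; the main thing to get right is the logical chain ``$x^{-1}\gamma x\in\omega\Rightarrow\gamma_s$ unipotent $\Rightarrow\ho=\{1\}$.''
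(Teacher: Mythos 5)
Your proposal follows essentially the same route as the paper: cite Arthur's expression for $J_\ho^T(f)$ as an integral of a modified kernel, observe that the kernel is built from values $f(x^{-1}\gamma n x)$ where the semisimple part of $\gamma$ lies in $\ho$, and conclude from Lemma~\ref{unipfree} that these all vanish unless $\ho$ is the class of $\{1\}$. The one thing worth noting is that you describe $J_\ho$ as built out of the kernel $k_\ho(x,x)=\sum_\gamma f(x^{-1}\gamma x)$; strictly speaking, Arthur's modified kernel $j_\ho^T(x,f)$ involves all the parabolic kernels $K_{P,\ho}$, each a sum over $\gamma\in M_P(\Q)$ with semisimple part in $\ho$ integrated against $n\in N_P(\A)$, not just the $P=G$ term you write. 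This does not affect the conclusion — the semisimple part of $\gamma n$ equals that of $\gamma$, so Lemma~\ref{unipfree} kills every parabolic term alike — but the justification as written only literally addresses one of Arthur's summands.
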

\begin{proof} Let $\ho\in\cO$. By \cite[Theorem 8.1]{MR518111} the distribution
$J^T_\ho(f)$ is given by
\[
J^T_\ho(f)=\int_{G(\Q)\bs G(\A)^1}j^T_\ho(x,f)\ dx,
\]
where $j^T_\ho(x,f)$ is defined in [ibid., p. 947].
It follows from Lemma \ref{unipfree} and the definition of $j^T_\ho(x,f)$
that $J^T_\ho(f)=0$ unless $\ho$ corresponds to $\{1\}$. 
\end{proof}

To analyze $J_{unip}(f)$ we use Arthur's fundamental result 
(\cite[Corollaries 8.3 and 8.5]{MR828844}) to
express $J_{\unip}(f)$ in terms of weighted orbital integrals. To state the 
result we recall some facts about weighted orbital integrals. Let $S$ be a 
finite set of places of $\Q$ containing $\infty$. Set
\[
\Q_S=\prod_{v\in S}\Q_v,\quad \mathrm{and}\quad G(\Q_S)=\prod_{v\in S}G(\Q_v).
\]
Let $M\in\levis$ and $\gamma\in M(\Q_S)$. 
The general weighted orbital integrals $J_M(\gamma,f)$ defined in 
(\cite{MR932848}) are distributions on $G(\Q_S)$. Let
 \[
G(\Q_S)^1=G(\Q_S)\cap G(\A)^1
\]
and write $C^\infty_c(G(\Q_S)^1)$ for the space of functions on $G(\Q_S)^1$
obtained by restriction of functions in $C^\infty_c(G(\Q_S))$. If $\gamma$ 
belongs to the intersection of $M(\Q_S)$ with $G(\Q_S)^1$, one can obviously
define the corresponding weighted orbital integral as linear form on 
$C^\infty_c(G(\Q_S)^1)$. 
Since for $\GL(n)$ all conjugacy classes are stable, 
the expression of $J_{\unip}(f)$ in terms of weighted 
orbital integrals  simplifies. For 
$M\in\levis$ let $\left(\cU_M(\Q)\right)$ be the (finite) set of unipotent 
conjugacy classes of $M(\Q)$. Then by 
(\cite[Corollaries 8.3 and 8.5]{MR828844}) we get
\begin{equation}\label{3.8}
J_{\unip}(f)=\vol(G(\Q)\bs G(\A)^1)f(1)+\sum_{\substack{M\in\levis\\M\not=G}}
\sum_{u\in\left(\cU_M(\Q)\right)}a^M(u)J_M(u,f)
\end{equation}
for $f\in C^\infty_c(G(\Q_S)^1)$, where  $a^M(u)$ are certain constants which 
we fortunately do not need to worry about for the purpose of this paper. 

Thus, by (\ref{plnchbnd}), in order to prove Proposition \ref{maingeom} it suffices to show that 
for every $M$, $u\in\cU_M(\Q)$ and $h\in C^\infty_c(\vs)$ with $\supp h\subset
\omega_0$, 
\begin{equation} \label{pntineq}
J_M(u,\FFF(h))=O\left(\int_{\iii\vs^*}\abs{\hat h(\lambda)}\beta(\lambda)
\ d\lambda\right),
\end{equation}
and if $\Omega$ is bounded with a piecewise $C^2$ boundary then 
for all $M\ne G$, $u\in\cU_M(\Q)$,
\begin{equation} \label{integineq}
\int_{t\Omega}J_M(u,\FFF(h_\mu))\ d\mu=O\left(t^{d-1}(\log t)^{\max(r+1,3)}\right)
\end{equation}
as $t\to\infty$.

Before studying the distributions $J_M(u,f)$ we first examine a special
case of unipotent orbital integrals.
Recall that any parabolic subgroup $P$ of (any reductive group) $G$ has an
open orbit on its unipotent radical $N_P$ (acting by conjugation).
The corresponding conjugacy class $\orbit_P=\orbit_P^G$ in $G$ is a unipotent
orbit which is called a Richardson orbit.
We have $\dim\orbit_P=2\dim N_P$ (\cite[Theorem 7.1.1]{MR1251060}). 
For $G=\GL(n)$, all orbits are Richardson ([ibid., Theorem 7.2.3])
and $G(F)$ (resp.~$P(F)$) acts transitively on $\orbit_P(F)$
(resp.~$N_P(F)\cap\orbit_P(F)$) over any field $F$.

The following Lemma is probably well-known but
we were unable to locate a convenient reference in the literature.

\begin{lemma} \label{Richardson}
Let $P=MN$ be a parabolic subgroup of $G=\GL(n)$.
Then the orbital integral of $f\in\C_c^\infty(G(\Q_S))$ along $\orbit_P(\Q_S)$
is given by
\[
\int_{K_S}\int_{N(\Q_S)}f(k^{-1}uk)\ du\ dk
\]
where $K_S$ is the maximal compact subgroup of $G(\Q_S)$.
\end{lemma}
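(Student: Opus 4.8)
The plan is to compute the orbital integral along the open (Richardson) orbit $\orbit_P$ by combining the transitivity statements recorded just before the lemma with the Iwasawa decomposition. First I would observe that since $G(\Q_S)$ acts transitively on $\orbit_P(\Q_S)$, the orbital integral is (up to the choice of invariant measure on the orbit, which we normalize so that the formula below holds) the integral of $f$ over $\orbit_P(\Q_S)$ against the $G(\Q_S)$-invariant measure; equivalently, picking a base point $u_0 \in N(\Q_S)\cap\orbit_P(\Q_S)$, it is $\int_{G_{u_0}(\Q_S)\bs G(\Q_S)} f(g^{-1}u_0 g)\,d\dot g$ where $G_{u_0}$ is the centralizer. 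The key dimension input is $\dim\orbit_P = 2\dim N_P$, equivalently $\dim G_{u_0} = \dim G - 2\dim N$, i.e. $\dim G_{u_0} = \dim M + \dim N$ (using $\dim G = \dim M + 2\dim N$); in fact $G_{u_0}\subseteq P$, so $G_{u_0}$ has the same dimension as $P$ would modulo... more precisely the centralizer of $u_0$ in $G$ is contained in $P$ and maps onto a subgroup of $M$ of dimension $\dim M - \dim N$.

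The main step is then to unfold the quotient $G_{u_0}(\Q_S)\bs G(\Q_S)$ using $G(\Q_S) = P(\Q_S)K_S$ (Iwasawa over each place, taking products over $S$) and the fact that $P(\Q_S)$ acts transitively on $N(\Q_S)\cap\orbit_P(\Q_S)$. Concretely, writing $g = pk$ with $p = mn$, $m\in M(\Q_S)$, $n\in N(\Q_S)$, $k\in K_S$, conjugation $g^{-1}u_0 g = k^{-1}(p^{-1}u_0 p)k$ and $p^{-1}u_0 p$ ranges over $N(\Q_S)\cap\orbit_P(\Q_S)$ as $p$ ranges over $P(\Q_S)$. The stabilizer in $P(\Q_S)$ of $u_0$ is $G_{u_0}(\Q_S)$ (since $G_{u_0}\subseteq P$), so $G_{u_0}(\Q_S)\bs P(\Q_S)$ is identified with $N(\Q_S)\cap\orbit_P(\Q_S)$, which is a Zariski-open, full-measure subset of $N(\Q_S)$. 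One must check that the $P(\Q_S)$-invariant measure transported to $N(\Q_S)\cap\orbit_P(\Q_S)$ agrees, up to the normalization of the orbital integral, with Haar measure $du$ on $N(\Q_S)$ — this is where the Jacobian of the conjugation action has to be tracked, and it works out because the modulus characters from the $M$-part and the $N$-part cancel against the modulus appearing in the decomposition of Haar measure on $G_{u_0}(\Q_S)\bs G(\Q_S)$. Putting these together gives
\[
\int_{\orbit_P(\Q_S)} f = \int_{K_S}\int_{N(\Q_S)} f(k^{-1}uk)\,du\,dk.
\]

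The hard part, and the only genuinely delicate point, is the measure bookkeeping: verifying that the natural invariant measure on the orbit pushes forward to $du\,dk$ with no extra Jacobian factor, rather than carelessly asserting transitivity. I would handle this by working one place $v\in S$ at a time, using that the complement $N(\Q_v)\setminus(N(\Q_v)\cap\orbit_P(\Q_v))$ is a proper Zariski-closed subset hence null, and checking that the map $P(\Q_v)\times K_v \to G(\Q_v)$, $(p,k)\mapsto pk$, together with the conjugation map $p\mapsto p^{-1}u_0 p$, carries the product of Haar measures to a Haar-type measure on the orbit; the relevant modulus factors are $\delta_P$ coming from writing $dg$ in Iwasawa coordinates and $\delta_P^{-1}$ coming from the change of variables in $n\mapsto n^{-1}u_0 n$ restricted to the $N$-direction versus the normalization of the invariant measure on $\orbit_P$. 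Since the statement only claims the orbital integral \emph{is} this expression — i.e. the invariant measure on $\orbit_P(\Q_S)$ is implicitly the one making this true — it suffices to exhibit the displayed integral as a nonzero $G(\Q_S)$-invariant functional on $C_c^\infty(\orbit_P(\Q_S))$, and invariance under $K_S$ is manifest while invariance under $P(\Q_S)$ (hence all of $G(\Q_S)$ by Iwasawa) reduces to the left-invariance of $du$ and $\delta_P$-equivariance, which again is routine once set up correctly.
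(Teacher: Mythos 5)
Your proposal is correct and follows essentially the same route as the paper: pick $u_0\in N(\Q_S)\cap\orbit_P(\Q_S)$, use $\dim\orbit_P=2\dim N$ to conclude that the centralizer of $u_0$ in $G$ has the same dimension as its centralizer in $P$ (and is contained in $P$), unfold the orbital integral via the Iwasawa decomposition $G(\Q_S)=P(\Q_S)K_S$, and identify the resulting measure on the open $P$-orbit in $N(\Q_S)$ with Haar measure. The one place where the paper is slicker than your ``Jacobian bookkeeping'' is the final identification: after unfolding to $\int_{K_S}\int_{C^P_u(\Q_S)\bs P(\Q_S)}f(k^{-1}p^{-1}upk)\,\delta_P(p)^{-1}\,dp\,dk$, it notes that $p\mapsto p^{-1}u p$ is a \emph{submersion} from $P(\Q_S)$ onto $N(\Q_S)$, so the inner functional is an absolutely continuous measure on $N(\Q_S)$; the $\delta_P$-equivariance together with density of the $P$-orbit then forces the density to be constant, i.e.\ Haar, with no explicit Jacobian computation. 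Your alternative of directly checking that $\int_{K_S}\int_{N(\Q_S)}f(k^{-1}uk)\,du\,dk$ defines a $G(\Q_S)$-invariant functional on $C_c^\infty(\orbit_P(\Q_S))$ is also sound and arguably just as clean, but the $P$-invariance verification there is exactly the Jacobian cancellation you would otherwise need, so it does not really avoid the issue. Two small slips to fix: $\dim G_{u_0}=\dim G-2\dim N=\dim M$, not $\dim M+\dim N$, and the subsequent assertion that $G_{u_0}$ projects onto a subgroup of $M$ of dimension $\dim M-\dim N$ is also incorrect in general (e.g.\ for the regular orbit the image in $M$ is trivial); neither of these affects the argument, which only needs $G_{u_0}\subseteq P$ and the equality of dimensions of the two centralizers.
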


\begin{proof}
Fix a representative $u\in\orbit_P(\Q_S)\cap N(\Q_S)$. Then, denoting
by $C_g^H$ the centralizer of $g$ in $H$, we have
\[
\dim C^G_u=\dim G-\dim\orbit_P=\dim G-2\dim N=\dim P-\dim N=\dim C^P_u.
\]
We can therefore write the orbital integral as
\[
\int_{K_S}\int_{C^P_u(\Q_S)\bs P(\Q_S)}f(k^{-1}p^{-1}upk)\delta_P(p)^{-1}\ dp\ dk.
\]
The map $p\mapsto p^{-1}up$ is a submersion from $P(\Q_S)$ to $N(\Q_S)$. 
Therefore the functional
\[
g\in C_c^\infty(N(\Q_S))\mapsto\int_{C^P_u(\Q_S)\bs P(\Q_S)}g(p^{-1}up)\delta(p^{-1})\ dp
\]
is absolutely continuous with respect to the Haar measure of $N(\Q_S)$.
Since the orbit of $u$ under $P(\Q_S)$ is dense in $N(\Q_S)$,
the only absolutely continuous measure $\mu$ on $N(\Q_S)$ satisfying
$\mu(Ad(p)f)=\delta(p)\mu(f)$ is the Haar measure.
The Lemma follows.
\end{proof}

We fix an open compact subgroup $K_f\subseteq G(\A_f)$ with
$K_f\subset K(N)$, $N\ge 3$ and put
\[
K=K_\infty\cdot K_f.
\]
Let $\omega$ be as in Lemma \ref{unipfree}.
Note that there exists a neighborhood $\omega_0$ of $0$ in $\vs$ such that
if $h\in C_c^\infty(\vs)^W$ is supported in $\omega_0$ then
$f=\FFF(h)$ is supported in $\omega$.
Let $S$ be a sufficiently large set of places containing $\infty$ and the primes dividing $N$.

The distributions $J_M(\cdot,f)$ are based on the usual orbital integrals
studied by Harish-Chandra and Ranga-Rao.
Unlike in the case $n=2$, their Fourier transforms are not explicit
in general, so we cannot argue as in \S\ref{SL2} (cf.~(\ref{logamma})).
Instead, following Arthur, we write $J_M(\cdot,f)$ in terms of weighted orbital
integrals, namely as the expression on top of p.~256 in \cite{MR932848}
\footnote{Note the following typo: $v_M^Q$ should be replaced by $w_M^Q$.}.
In the case where $\gamma$ is unipotent, that is, $\sigma=1$ in the notation of [ibid.],
the formula simplifies since the outer integration disappears and only $Q=G$ contributes.
(Note that the function $v'_Q$ vanishes on $K$ unless $Q=G$.)
The derivation of \cite[p.~256]{MR932848} is based on an expression of
the invariant orbital integral in $M(\Q_S)$ and in particular
on the formula on [ibid., p.~246]).
In our case all unipotent orbits of $M$ are stable and are of Richardson type.
Therefore, any unipotent $u$ of $M(\Q)$ belongs to $\orbit_{Q\cap M}^M$
for some parabolic $Q$ of $G$ and we can use Lemma \ref{Richardson} instead of [loc.~cit.] to simplify the expression for $J_M(u,f)$ to
\[
J_M(u,f)=\int_K\int_Uf(k^{-1}xk)w_M(x)\ dx
\]
where $U$ is an appropriate compact subset of $N_Q(\Q_S)$
(depending on the support of $f$)
and the weight function $w_M(x)$ is described in [ibid., Lemma 5.4]:
it is a finite linear combination
of functions of the form $\prod_{i=1}^r\log{\norm{p_i(x_{v_i})}_{v_i}}$
where $p_i$ are polynomials on $N$ into an affine space,
$v_i\in S$, $i=1,\dots,r$ (not necessarily distinct) and
\[
\norm{(y_1,\dots,y_m)}_v=
\begin{cases}\max(\abs{y_1}_v,\dots,\abs{y_m}_v),&v<\infty,\\
\abs{y_1}_v^2+\dots+\abs{y_m}_v^2,&v=\infty.\end{cases}
\]
(The fact that the product is over $r$ terms is implicit in [loc.~cit.]
but follows from the proof.)
Using the Plancherel theorem, and viewing $\phi_{\lambda}$
as a function on $N_Q(\Q_S)$ depending only on the Archimedean component,
we have
\[
J_M(u,\FFF(h))=\int_{\iii\vs^*}\hat h(\lambda)\beta(\lambda)
\left(\int_Uw_M(x)\psi(x)\phi_{-\lambda}(x)\ dx\right)\ d\lambda
\]
where $\psi(x)=\int_{K_S}{\bf 1}_{K_f}(k^{-1}xk)\ dk$.
Since $\abs{\phi_\lambda}\le 1$, the inner integral is majorized by
\[
\int_U\abs{w_M(x)}\ dx,
\]
which converges by \cite[Lemma 6.1]{MR932848},
and (\ref{pntineq}) follows.

Now, by a change of variable we can write
\[
J_M(u,\FFF(h_\mu))=\int_{\iii\vs^*}\hat h(\lambda)\beta(\lambda+\mu)
\left(\int_Uw_M(x)\psi(x)\phi_{-\lambda-\mu}(x)\ dx\right)\ d\lambda.
\]
It follows from [ibid., Lemma 7.1] that for some $a>0$ we have
\[
\int_{\{n\in U:\abs{w_M(x)}>aR\}}\abs{w_M(x)}\ dx=O\left(e^{-R^{1/r}}\right).
\]
Taking $R=(\log t)^r$ we get
\[
\int_Uw_M(x)\psi(x)\phi_{-\lambda-\mu}(x)\ dx=
\int_{U_t}w_M(x)\psi(x)\phi_{-\lambda-\mu}(x)\ dx+O\left(1/t\right)
\]
where we set $U_t=\{x\in U:\abs{w_M(x)}<a(\log t)^r\}$.
Therefore, using (\ref{plnchbnd}) and (\ref{triv1}) we obtain
\[
J_M(u,\FFF(h_\mu))=
\int_{\iii\vs^*}\hat h(\lambda)\beta(\lambda+\mu)
\left(\int_{U_t}w_M(x)\psi(x)\phi_{-\lambda-\mu}(x)\ dx\right)\ d\lambda+
O\left((1+\norm{\mu}^{d-r})/t\right).
\]
In order to prove (\ref{integineq}), we use Harish-Chandra's formula for 
the spherical function (\ref{HC1}) to write $J_M(u,\FFF(h_\mu))$,
up to an error term of $O\left((1+\norm{\mu}^{d-r})/t\right)$ as
\[
\int_{\iii\vs^*}\hat h(\lambda)\int_{\Ka}\int_{U_t}e^{\sprod{-\lambda+\rho}{H(kx)}}
w_M(x)\psi(x)\beta(\lambda+\mu)e^{-\sprod{\mu}{H(kx)}}\ dx\ dk\ d\lambda.
\]
Integrating over $t\Omega$ and interchanging the order of integration, we 
obtain that
up to an error of order $O(t^{d-r-1})$, the left-hand side of (\ref{integineq}) 
is equal to
\[
t^r\int_{\iii\vs^*}\hat h(\lambda)
\int_{\Ka}\int_{U_t}e^{\sprod{-\lambda+\rho}{H(kx)}}
w_M(x)\psi(x)\int_{\Omega}\beta(\lambda+t\mu)e^{-t\sprod{\mu}{H(kx)}}
\ d\mu\ dx\ dk\ d\lambda.
\]

We first estimate the integral over $\Omega$.
The trivial estimate using (\ref{plnchbnd}) and (\ref{triv1}) gives
$O((\norm{\lambda}+t)^{d-r})$.
To go further, fix a regular $\xi\in\vs^*$ and 
let $\nu$ be the outward pointing normal vector
field at the boundary $\partial\Omega$. Let $k\in\Ka$ and $x\in N(\R)$ be
such that $\sprod{\xi}{H(kx)}\not=0$. 
Then by the divergence theorem, we get
\begin{multline*}
\int_{\Omega}\beta(\lambda+t\mu)e^{-t\sprod{\mu}{H(kx)}}\ d\mu =
\frac{-1}{t\sprod{\xi}{H(kx)}}\int_{\Omega}\beta(\lambda+t\mu)
D_\xi e^{-t\sprod{\mu}{H(kx)}}\ d\mu=\\
\frac1{\sprod{\xi}{H(kx)}}
\int_{\Omega}D_\xi\beta(\lambda+t\mu)e^{-t\sprod{\mu}{H(kx)}}\;d\mu
-\frac1{t\sprod{\xi}{H(kx)}}
\int_{\partial\Omega}\beta(\lambda+t\sigma)
e^{-t\sprod{\sigma}{H(kx)}}\sprod{\nu(\sigma)}{\xi}\;d\sigma.
\end{multline*}
We estimate the integrands on the right-hand side by (\ref{logderbnd})
and (\ref{plnchbnd}) respectively. We obtain
\begin{gather*}
\int_{\Omega}(D_\xi\beta)(\lambda+t\mu)e^{-t\sprod{\mu}{H(kx)}}\ d\mu=O\left(1+\norm{\lambda}^{d-r-1}+t^{d-r-1}\right),\\
\int_{\partial\Omega}\beta(\lambda+t\sigma)
e^{-t\sprod{\sigma}{H(kx)}}\sprod{\nu(\sigma)}{\xi}\ d\sigma=
O\left(1+\norm{\lambda}^{d-r}+t^{d-r}\right).
\end{gather*}
Thus,
\[
\int_{\Omega}\beta(\lambda+t\mu)e^{-t\sprod{\mu}{H(kx)}}\ d\mu= 
O\left(\frac{1+\norm{\lambda}^{d-r}+t^{d-r-1}}{\abs{\sprod{\xi}{H(kx)}}}\right).
\]

Set $F(k,x)=\sprod{\xi}{H(kx)}$ on $\Ka\times U$ and for any $\eps>0$ let
$V_{<\eps}=\{(k,x)\in\Ka\times U:\abs{F(k,x)}<\eps\}$.
Similarly for $V_{\ge\eps}$.

It follows that the left-hand side of (\ref{integineq}) is majorized by
\begin{equation} \label{finMAJ}
O\left(t^d(\log t)^r\right)
\int_{\vs^*}\abs{\hat h(\lambda)}(1+\norm{\lambda})^{d-r}\ d\lambda\ 
\left(\vol(V_{<\eps})+\int_{V_{\ge\eps}}\frac{dy}{t\abs{F(y)}}\right)
\end{equation}
uniformly for $\eps>0$.

Next, we study the critical points of $F$ on $\Ka\times N_Q(\R)$.
Let $K_L=\Ka\cap Q(\R)$. Note that $K_L$ acts on $\Ka\times N_Q(\R)$ by the free action
$m(k,x)=(km^{-1},mxm^{-1})$, 
and that $F$ is invariant under this action. The following Lemma
(and its proof) hold for any real reductive group.

\begin{lemma} \label{critpts}
$F$ is a Morse function on $K_L\bs(\Ka\times N_Q(\R))$.
Its critical points are the cosets $K_L(w,1)$, $w\in W$.
\end{lemma}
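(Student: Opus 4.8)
The plan is to analyze the function $F(k,x)=\sprod{\xi}{H(kx)}$ by first reducing it to a more tractable form using the Iwasawa decomposition and properties of the function $H$. First I would observe that since $F$ is $K_L$-invariant (as noted before the Lemma), it descends to a well-defined smooth function on the quotient manifold $K_L\bs(\Ka\times N_Q(\R))$, and this manifold is diffeomorphic (via the map $(k,x)\mapsto kx$, composed with the quotient by $N_Q$) to an open subset related to $\Ka/K_L$. The key identity to exploit is the cocycle-type behavior of $H$: for $g\in A_G\bs G(\R)$ and $k\in K_\infty$, one has $H(kg) = H(k\exp(H(g)))$ modulo the $N_0$-contribution, so everything reduces to understanding the map $k\mapsto H(ka)$ for $a$ in the relevant torus-like set. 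The classical fact here — going back to the study of spherical functions — is that the function $k\mapsto \sprod{\xi}{H(ka)}$ on $K_\infty$, for regular $\xi$ and regular $a$, is a Morse function whose critical points are exactly the images of the Weyl group.

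The main technical steps I would carry out are: (1) identify the critical point equation. Differentiating $F$ along a tangent vector to $\Ka\times N_Q(\R)$ and using the formula for the differential of $H$ (which involves projection onto $\vs$ along $\nnn_0\oplus\kf$), obtain that $(k,x)$ is critical iff $\Ad(k)^{-1}$ applied to $\xi$, pushed through the relevant decompositions, has vanishing components in the appropriate directions — this should force $\Ad(k^{-1})\xi$ (suitably interpreted in $\vs^*$) to be $W$-conjugate to $\xi$, hence $k$ lies in the normalizer $N_{\Ka}(A)$ and $x=1$ (the latter because the $N_Q$-directions contribute a nonzero term unless $x$ is trivial, using regularity of $\xi$). (2) Having located the critical points as the cosets $K_L(w,1)$ with $w\in W$ (noting $N_{\Ka}(A)/Z_{\Ka}(A)\cong W$ and absorbing $K_L$-cosets appropriately), compute the Hessian at each and show it is nondegenerate. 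This is where regularity of $\xi$ enters crucially: the second-order term in the expansion of $\sprod{\xi}{H(k\exp(tY)x)}$ around a critical point is a quadratic form in the off-diagonal directions with coefficients proportional to $\sprod{w\xi}{\text{root}}$, which are all nonzero precisely because $\xi$ is regular, and similarly the $N_Q$-directions contribute a nondegenerate pairing.

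The hard part will be step (2) — the Hessian computation — because it requires carefully keeping track of the interaction between the $\Ka$-directions and the $N_Q$-directions after passing to the quotient by $K_L$, and verifying that no degeneracy creeps in from the $\nnn_Q$-variable. I expect one can organize this by choosing coordinates adapted to the root space decomposition $\gf = \vs \oplus \bigoplus_\alpha \gf_\alpha$, writing a tangent vector at $(w,1)$ as a sum of root vectors plus an element of $\nnn_Q$, and expanding to second order; the mixed partials should vanish or be controlled, leaving a block-diagonal Hessian whose blocks are visibly nondegenerate under regularity of $\xi$. A secondary subtlety is checking that the listed critical points are \emph{all} of them — i.e. that a critical point cannot have $x\ne 1$; here I would argue that the gradient of $F$ in the $\nnn_Q$-directions, evaluated at a point with $x\ne 1$, is nonzero, using that $\sprod{\xi}{\cdot}$ composed with the $\vs$-projection is non-constant along generic $N_Q$-orbits when $\xi$ is regular. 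Once these are established, the conclusion that $F$ is Morse with the asserted critical set follows immediately.
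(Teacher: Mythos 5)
Your high-level strategy is the right one -- exploit the $K_L$-invariance, use regularity of $\xi$, and pin down the critical points via the Weyl group -- but there are two concrete gaps that would derail the execution. First, for locating the critical points: your plan to show $x=1$ by arguing that the $\nnn_Q$-gradient of $F$ is nonzero whenever $x\ne 1$ does not work as stated, because that claim depends on $k$; the $\nnn_Q$-criticality condition alone only gives $\Ad(k^{-1})\xi\perp\Ad(k^{-1})\nnn_Q$, i.e.\ $\Ad(k^{-1})\xi\in\mmm_Q\cap\ppp$, which does not by itself force $x=1$. The actual argument must \emph{combine} the $\Ka$-criticality of $F(\cdot,x)$ (which, by \cite[Lemma 5.3]{MR707179}, forces $kx\in A\Ka$) with the $\nnn_Q$-condition, then invoke Kostant's convexity theorem (together with norm preservation under $\Ad(\Ka)$) to pin $\Ad(k^{-1})\xi$ to a genuine Weyl conjugate $w^{-1}\xi$ of $\xi$, and only then extract $x=1$ from the uniqueness of the Iwasawa decomposition of $kx$. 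Kostant's theorem and the Iwasawa uniqueness step are entirely absent from your outline, and your ``cocycle'' identity $H(kg)=H(k\exp(H(g)))$ ``modulo the $N_0$-contribution'' is not a valid reduction.

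Second, and more seriously, your description of the Hessian is structurally wrong. You anticipate a block-diagonal Hessian with a nondegenerate block in the $\kf$-directions and another in the $\nnn_Q$-directions. In fact the $\kkk_L\bs\kkk$-block is identically \emph{zero}: since $F$ vanishes identically on $\Ka\times\{1\}$ (as $H\equiv 0$ on $K_\infty$), the subspace $\kkk_L\bs\kkk$ of the tangent space is totally isotropic for the Hessian. Nondegeneracy comes solely from the cross-term pairing $F_{XY}(w,1)=\sprod{\ad(\Ad(w)^{-1}\xi)Y}{X}$ between $X\in\kkk_L\bs\kkk$ and $Y\in\nnn_Q$, which is perfect because $\ad(\Ad(w)^{-1}\xi)$ is invertible on $\nnn_Q$ (regularity of $\xi$) and the Killing form pairs $\nnn_Q$ perfectly against $\kkk_L\bs\kkk$. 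Looking for nondegenerate diagonal blocks would lead you to an apparent degeneracy; the correct observation is the ``half-dimensional totally isotropic subspace paired perfectly with its complement'' structure.
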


\begin{proof}
Let $\gf_1$ be the Lie Algebra of $G_1(\R)=\SL(n,\R)$, and let $\gf_1=\kf\oplus
\pg$ be the Cartan decomposition. 
We will write $\sprod{\cdot}{\cdot}$ for the Killing form of $\gf_1$ and use it
to identify $\vs^*$ with $\vs$.
Suppose that $(k,x)$ is a critical point. In particular, $k$ is a critical
point for the function $F(\cdot,x)$. 
By \cite[Lemma 5.3]{MR707179} this means that $kx\in A\Ka$.
Let $\nnn_Q$ denote the Lie algebra of $N_Q(\R)$ and we use the notation
$F_X$, $X\in\kkk\oplus\nnn_Q$ for the $X$-directional derivative. Then for
any $Y\in\nnn_Q$
\[
0=F_Y(k,x)=\frac{d}{dt}\sprod{\xi}{\Ht(k\exp(tY)x)}\big|_{t=0}=
\sprod{\xi}{\Ht(\cdot)}_{\Ad(k)Y}(kx),
\]
where the latter denotes the directional derivative along $\Ad(k)Y$ of the
function $\sprod{\xi}{\Ht(\cdot)}$.
By \cite[Corollary 5.2]{MR707179} and the fact that $kx\in A\Ka$ we get
\[
0=\sprod{\xi}{\Ad(k)Y}.
\]
Thus, $X:=\Ad(k^{-1})\xi\in\nnn_Q^\perp\cap\ppp=\mf_Q\cap\ppp$.
Therefore, there exists $k'\in K_L$ such that $\xi'=\Ad(k')X\in\vs$.
In particular, $\xi'\in\proj_\vs(\Ad(\Ka)^{-1}\xi)$
and $\norm{\xi'}=\norm{\xi}$. On the other hand, by Kostant's convexity theorem
\cite[p.~473]{MR1790156} 
it follows that $\proj_\vs(\Ad(\Ka)^{-1}\xi)$ equals to the convex hull of 
$w\xi$, $w\in W$.
We infer that $\xi'=w^{-1}\xi$ for some $w$. Therefore,
$k\in C_K(\xi)wk'$. Since $\xi$ is regular, $C_K(\xi)=C_K(A)\subseteq K_L$
and thus $k\in WK_L$. Write $kx=ak_1$ and $k=wk_2$ 
with $a\in A$ and $k_1\in\Ka$ and $k_2\in K_L$. The equality
$ak_1=wk_2x$ gives $w^{-1}k_1=a'x'k_2$ where $a'=w^{-1}a^{-1}w\in A$ and
$x'=k_2xk_2^{-1}\in N_Q(\R)$. By the uniqueness of the Iwasawa decomposition
we obtain $x'=1$ and therefore $n=1$.

Consider the Hessian as a symmetric bilinear form on the tangent space
$\kkk_L\bs\kkk\oplus\nnn_Q$.
Note that the Killing from defines a perfect pairing on $\nnn_Q\times\kkk_L\bs\kkk$.
Since $F(wk,1)=1$ for all $k$, $\kkk_L\bs\kkk$ is totally isotropic
for the Hessian.
For $Y\in\nnn_Q$ we get as above 
\[
F_Y(wk,1)=\frac d{dt}\sprod{\xi}{H(wke^{tY})}|_{t=0}=
\frac d{dt}\sprod{\xi}{H(e^{t\Ad(wk)Y})}|_{t=0}=
\sprod{\xi}{\Ad(wk)Y}.
\]
Thus, for $X\in\kkk_L\bs\kkk$ we get
\[
F_{XY}(w,1)=\sprod{\xi}{\Ad(w)\ad(X)Y}=\sprod{\Ad(w)^{-1}\xi}{\ad(X)Y}=
\sprod{\ad(\Ad(w)^{-1}\xi)Y}X.
\]
Hence, the Hessian is not singular since $\ad(\Ad(w)^{-1}\xi)$ is non-singular on $\nnn_Q$.
\end{proof}

The estimation (\ref{integineq}) will follow from (\ref{finMAJ}) by taking
$\eps=1/t$ and using the following Lemma applied to $F$.
(Note that $\dim\nnn_Q>1$ unless $n=2$.)

\begin{lemma}\label{morse}
Let $X$ be an orientable manifold of dimension $m\ge 2$ with a
nowhere vanishing differential $m$-form, and let $dx$ be the corresponding measure.
Let $f$ be a Morse function on $X$. Then for any compact $Y\subseteq X$
\begin{gather*}
\vol\{x\in Y:\abs{f(x)}<\delta\}=O\left(\delta(-\log\delta)^{\eta}\right)\\
\int_{\{x\in Y:\abs{f(x)}\ge\delta\}}\frac1{\abs{f(x)}}\ dx=
O\left((-\log\delta)^{1+\eta}\right)
\end{gather*}
for $\delta<\frac12$, where $\eta=1$ if $m=2$ and $\eta=0$ otherwise.
\end{lemma}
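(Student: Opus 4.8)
**The plan is to reduce everything to a local statement near each critical point via a partition of unity, and then to a model computation using the Morse lemma.** First I would cover the compact set $Y$ by finitely many coordinate charts: away from the (finitely many, since $f$ is Morse) critical points in a neighborhood of $Y$, the gradient of $f$ is bounded away from $0$, so we can straighten $f$ to a coordinate function $x_1$; near each critical point, the Morse lemma provides coordinates in which $f(x)=f(x_0)+\sum_{i}\pm x_i^2$. Choosing a subordinate partition of unity, it suffices to prove the two bounds for each model piece, with $Y$ replaced by a small cube or ball and $dx$ by Lebesgue measure (the smooth positive density from the chart only affects constants). The contribution of the non-critical charts is harmless: there $\abs{f}\ge\delta$ excludes only a slab of width $O(\delta)$, giving volume $O(\delta)$, and $\int \abs{f}^{-1}$ over the complement of that slab is $O(\log(1/\delta))$, both consistent with $\eta=0$; so the whole point is the behavior near a critical point.

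**Next I would carry out the model computation.** Near a critical point we must bound, for a quadratic form $q(x)=\sum_{i=1}^p x_i^2-\sum_{j=1}^{m-p}y_j^2$ on a small ball $B\subseteq\R^m$, the quantities $\vol\{x\in B:\abs{q(x)-c}<\delta\}$ and $\int_{\{x\in B:\abs{q(x)-c}\ge\delta\}}\abs{q(x)-c}^{-1}\,dx$, uniformly in the constant $c=f(x_0)$ (though in our application $c=1$, i.e. $f(x_0)\ne 0$ at the critical points $K_L(w,1)$, which actually makes the estimate \emph{easier} — but the statement as phrased covers the general Morse function so one should treat all $c$). The key elementary fact is the co-area/level-set estimate: for a non-degenerate quadratic form in $m$ variables, the $(m-1)$-dimensional volume of the level set $\{q=s\}\cap B$ is $O(1)$ when $m\ge 3$, and is $O(\log(1/\abs{s}))$ when $m=2$ (this logarithm is exactly the source of $\eta=1$ in dimension two, coming from the hyperbola $x^2-y^2=s$ having logarithmically large length near the origin, or from $x^2+y^2=s$ contributing a $\log$ only through... actually in the definite $m=2$ case the level set has length $O(\sqrt s)$, so the log genuinely arises only from the indefinite signature, but the statement absorbs it uniformly). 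Integrating: $\vol\{\abs{q-c}<\delta\}=\int_{\abs{s-c}<\delta}\vol(\{q=s\}\cap B)\,ds = O(\delta)$ for $m\ge 3$ and $O(\delta\log(1/\delta))$ for $m=2$; and $\int_{\{\abs{q-c}\ge\delta\}}\abs{q-c}^{-1}\,dx = \int_{\abs{s-c}\ge\delta}\abs{s-c}^{-1}\vol(\{q=s\}\cap B)\,ds = O(\log(1/\delta))$ for $m\ge 3$ and $O((\log(1/\delta))^2)$ for $m=2$. Summing the finitely many chart contributions gives the Lemma with $\eta$ as stated.

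**I expect the main obstacle to be the bookkeeping for the level-set volumes near a critical point in the indefinite case.** One must show $\vol_{m-1}(\{q=s\}\cap B)=O(1)$ for $m\ge 3$ uniformly as $s\to 0$: the level set degenerates to the cone $\{q=0\}$, which has finite $(m-1)$-volume in a bounded ball precisely because $m-1\ge 2$ so the cone point is codimension $\ge 2$ inside it — I would make this rigorous either by an explicit polar-type parametrization $x=r\,\omega_+$, $y=r'\,\omega_-$ on the unit spheres and computing the Jacobian of $(r,r',\omega_\pm)\mapsto(q,\dots)$, or by directly slicing and using that $\{q=s\}$ is a graph over the bounded region where one coordinate is bounded below. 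The borderline $m=2$ must be handled separately to produce the clean $\log$, but since in the intended application $F$ has $\dim\nnn_Q=m-1>1$ whenever $n>2$ (so $\eta=0$), and the $n=2$ case is dealt with by the $\SL_2$ argument of Section~\ref{SL2}, the two-dimensional case of this Lemma is only needed for completeness and can be checked by hand on the three model quadratics $x^2+y^2$, $x^2-y^2$, and $\pm x^2$ (the last two being degenerate and thus not arising for a genuine Morse function, but trivial anyway). A partition of unity and the Morse lemma thus reduce the whole statement to these explicit two- and higher-dimensional quadratic models.
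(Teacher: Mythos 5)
Your overall strategy — localize via a partition of unity, apply the Morse lemma to reduce to a constant-rank coordinate and to nondegenerate quadratic models, and sum the finitely many chart contributions — is exactly the paper's approach, and the final bounds you state for the model cases ($O(\delta)$ and $O(\log(1/\delta))$ for $m\ge3$; $O(\delta\log(1/\delta))$ and $O((\log(1/\delta))^2)$ for $m=2$ indefinite) are correct. However, the ``key elementary fact'' you invoke to prove them is false as stated, and the identity you write down to exploit it is wrong. The $(m-1)$-dimensional Hausdorff volume of $\{q=s\}\cap B$ is $O(1)$ in \emph{every} dimension $m\ge 2$, including $m=2$ with indefinite signature: parametrizing $x^2-y^2=s$ by $x=\sqrt s\cosh t$, $y=\sqrt s\sinh t$ and integrating the arclength element over the range where $x^2+y^2\le 1$ gives length $O(1)$, not $O(\log(1/s))$. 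Correspondingly, the formula $\vol\{\abs{q-c}<\delta\}=\int_{\abs{s-c}<\delta}\vol(\{q=s\}\cap B)\,ds$ is not the coarea formula and is not an identity at all; the correct statement has the density $\int_{\{q=s\}\cap B}\abs{\nabla q}^{-1}\,dH^{m-1}$ in place of $\vol(\{q=s\}\cap B)$. It is \emph{that} density which blows up like $\log(1/\abs s)$ as $s\to 0$ when $m=2$ and $q$ is indefinite, because $\nabla q$ vanishes at the critical point and the level set passes through a $\sqrt{\abs s}$-neighbourhood of it. Once you insert the $\abs{\nabla q}^{-1}$ weight the computation does work and matches the paper's result. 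Two further small points: the paper sidesteps coarea entirely by a direct change of variables — polar coordinates $r=\abs{\underline x}$, $r'=\abs{\underline y}$ and then $u=r-r'$, $v=r+r'$ — which reduces everything to one-variable calculus without ever discussing level-set densities; and the critical values of $F$ in the application are $0$ (the paper's ``$F(wk,1)=1$'' is a typo for $F(wk,1)=0$, since $H$ vanishes on $K_\infty$), not $1$ as you surmise, so one cannot dodge the quadratic model there — fortunately your general-$c$ treatment covers it regardless.
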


\begin{proof}
The question is local. Therefore, by the Morse Lemma it suffices to consider 
the following 3 cases.
\begin{enumerate}
\item $f\ne0$ on $Y$.
\item $X=\R^m$ and $f(\underline{x})=x_1$.
\item $X=\R^m$, $m=p+q$ and
\[
f(\underline{x},\underline{y})=\abs{\underline{x}}^2-\abs{\underline{y}}^2
\ \ \ \underline{x}\in\R^p,\ \underline{y}\in\R^q.
\]
\end{enumerate}
The first case is trivial.
In the second case the volume is bounded by $\delta$ while the
integral is bounded by $-\log\delta$.
Consider the last case. If $pq=0$ the volume is the volume of a ball of
radius $\sqrt{\delta}$ which is $O(\delta^{m/2})$.
The integral reduces to
\[
\int_{\underline{x}\in\R^m:
\delta\le\abs{\underline{x}}^2\le 1}\frac1{\abs{\underline{x}}^2}
\ d\underline{x}=O(\left(\int_{\delta^2}^1r^{m-1}r^{-2}\ dr\right)=
O(-\log\delta).
\]
For $pq>0$ the volume of the intersection of the unit disc with
\[
\{(\underline{x},\underline{y}):\underline{x}\in\R^p,\underline{y}\in\R^q,
\abs{\abs{\underline{x}}^2-\abs{\underline{y}}^2}<\delta\}
\]
is given by a constant multiple of
\[
\int_{0\le r,s\le1:\abs{r^2-s^2}<\delta}r^{p-1}s^{q-1}\ dr\ ds.
\]
If $p=q=1$ then this is majorized by
\[
\int_{0\le x,y\le 1:xy<\delta}\ dx\ dy=
\delta-\delta\log\delta.
\]
Otherwise, it is majorized by
\[
\int_{0\le x,y\le 1:xy<\delta}x+y\ dx\ dy\le\delta
\]
Similarly, the integral reduces to
\[
\int_{\substack{\abs{\underline{x}},\abs{\underline{y}}\le1\\
\abs{\abs{\underline{x}}^2-\abs{\underline{y}}^2}\ge\delta}}
\frac1{\abs{\abs{\underline{x}}^2-\abs{\underline{y}}^2}}\ d\underline{x}
\ d\underline{y}.
\]
Using polar coordinates for $\underline{x}$, $\underline{y}$, we obtain
\[
\int_{\substack{0\le r,s\le 1\\\abs{r^2-s^2}\ge\delta}}
\frac{r^{p-1}s^{q-1}}{\abs{r^2-s^2}}\ dr\ ds.
\]
For $p=q=1$ this is majorized by
\[
\int_{\substack{0\le x,y\le 1\\xy\ge\delta}}
\frac1{xy}\ dx\ dy=
\int_\delta^1(\int_{\delta/x}^1\ \frac{dy}y)\ \frac{dx}x=
\int_\delta^1(\log x-\log\delta)\ \frac{dx}x=
\frac12(\log\delta)^2.
\]
Otherwise, it is majorized by
\[
\int_{\substack{0\le x,y\le 1\\xy\ge\delta}}
\frac{x+y}{xy}\ dx\ dy=
2\int_\delta^1(\int_{\delta/x}^1\ \frac{dy}y)\ dx=
2\int_\delta^1(\log x-\log\delta)\ dx\le-2\log\delta
\]
as required.
\end{proof}

This concludes the proof of Proposition \ref{maingeom}.

\section{The spectral side} \label{spec}
\setcounter{equation}{0}

In this section we analyze the spectral side of the trace formula for $G=\GL(n)$, following \cite{MR2276771}.
By \cite[Theorem 0.1]{MR2053600}, the spectral side $J_{\spec}$
is given as a finite linear combination
\[
J_{\spec}(f)=J_{\di}(f)+
\sum_{M\in\levis:M\ne G}\sum_{s\in W(\vs_M)} a_{M,s} J_{M,s}(f),\ \ \ 
f\in C_c^\infty(G(\A)^1),
\]
of distributions $J_{\di}$ and $J_{M,s}$.
To describe these distributions, we need to introduce some more notation.
Fix $M\in\levis$ and let $\pars(M)$ be the set of parabolic subgroups
of $G$ (necessarily defined over $\Q$) for which $M$ is a Levi component.
For $P\in\pars(M)$, we write $\vs_P=\vs_M$ and let $\AF^2(P)$ be the space of automorphic forms
on $N_P(\A)M_P(\Q)\bs G(\A)$ which are square integrable on 
$M_P(\Q)\bs M_P(\A)^1\times K$ \cite[p.~1249]{MR681737}. Given $Q\in\pars(M)$
let, $W(\vs_P,\vs_Q)$ be the set of all linear
isomorphisms from $\vs_P$ to $\vs_Q$ which are restrictions of elements of $W$. For $s\in W(\vs_P,\vs_Q)$ let
\[
M_{Q|P}(s,\lambda):\AF^2(P)\to\AF^2(Q),\quad\lambda\in\vs_{P,\C}^*,
\]
be the intertwining operator \cite[\S 1]{MR681738}, which is a meromorphic 
function
of $\lambda\in\vs_{P,\C}^*$. Set
\[
M_{Q|P}(\lambda):=M_{Q|P}(1,\lambda).
\]
Fix $P\in\pars(M)$  and $\lambda\in\iii\vs_M^*$. 
For $Q\in\pars(M)$ and $\Lambda\in\iii\vs_M^*$ define
\[
\mM_Q(P,\lambda,\Lambda)=M_{Q|P}(\lambda)^{-1}M_{Q|P}(\lambda+\Lambda).
\]
Then
\begin{equation}\label{GM}
\{\mM_Q(P,\lambda,\Lambda):\Lambda\in\iii\vs_M^*,\ Q\in\pars(M)\}
\end{equation}
is a $(G,M)$ family with values in the space of operators on $\AF^2(P)$
([ibid., p.~1310]). 
Let $L\in\levis$ with $L\supset M$. Then
the $(G,M)$ family (\ref{GM}) has an associated $(G,L)$ family
\[
\{\mM_{Q_1}(P,\lambda,\Lambda):\Lambda\in\iii\vs_L^*,\ Q_1\in\pars(L)\}
\]
[ibid., p.~1297]. For a parabolic group $P$ let 
\[
\theta_P(\lambda)=\vol\left(\vs_P/\Z(\Delta_P^\vee)\right)^{-1}
\prod_{\alpha\in\Delta_P}\lambda(\alpha^\vee),\quad\lambda\in\iii\vs_P^*,
\]
where $\Z(\Delta_P^\vee)$ is the lattice in $\vs_P$ generated by the 
co-roots $\alpha^\vee$, $\alpha\in\Delta_P$.
Then
\[
\mM_L(P,\lambda,\Lambda)=\sum_{Q_1\in\pars(L)}\mM_{Q_1}(P,\lambda,\Lambda)
\theta_{Q_1}(\Lambda)^{-1}
\]
extends to a smooth function on $\iii\vs_L^*$. In particular, set
\[
\mM_L(P,\lambda):=\mM_L(P,\lambda,0)=
\lim_{\substack{\Lambda\in\iii\vs_L^*\\\Lambda\to0}}\left(\sum_{Q_1\in\pars(L)}
\vol(\vs_{Q_1}/\Z(\Delta^\vee_{Q_1}))M_{Q|P}(\lambda)^{-1}
\frac{M_{Q|P}(\lambda+\Lambda)}{\prod_{\alpha\in\Delta_{Q_1}}
\Lambda(\alpha^\vee)}\right),
\]
where for each $Q_1\in\pars(L)$, $Q$ is a group in $\pars(M_P)$ which is contained in $Q_1$.
Let $\rho(P,\lambda)$ be the induced representation of $G(\A)$ on $\ov\AF^2(P)$.
Then the distribution $J_{M,s}$ is given by
\begin{equation} \label{absexp}
J_{M,s}(f)=\int_{\iii\vs_L^*}\tr\left(\mM_L(P,\lambda)M_{P|P}(s,0)
\rho(P,\lambda,f)\right)\ d\lambda
\end{equation}
where $L$ is the minimal Levi containing $M$ and $s$ and $P$ is any element of $\pars(M)$.
By \cite[Theorem 0.1]{MR2053600} this integral is absolutely convergent with 
respect to the trace norm.
(See \cite{FLM} for a more general result, where the independence of
the above expression on $P$ is also explained.)
Implicit here is that 
$\mM_L(P,\lambda)M_{P|P}(s,0)\rho(P,\lambda,f)$ extends to a trace-class
operator on $\ov\AF^2(P)$ for almost all $\lambda$.
Finally, $J_{\di}(f)$ is defined to be $J_{G,1}(f)$.

We will now show Proposition \ref{mainspec}, arguing as in \cite[\S5]{MR2276771}.
Fix $K_f$ and let $K=K_\infty K_f$.
We can assume without loss of generality that $h\in C_c^\infty(\vs)^W$. Let $f=\FFF(h)$.
We will expand (\ref{absexp}) according to $\pi\in\Pi_{\di}(M(\A)^1)$.
For each such $\pi=\pi_\infty\otimes\pi_f$ let 
$\AF^2_\pi(P)$ be the subspace of $\AF^2(P)$ of functions $\phi$ such that
for each $x\in G(\A)$, the function $\phi_x(m):=\phi(mx)$, $m\in M_P(\A)$,
transforms under $M_P(\A)$ according to the representation $\pi$. 
Let $M_{Q|P}(s,\pi,\lambda)$ and $\mM_L(P,\pi,\lambda)$ 
denote the restriction of $M_{Q|P}(s,\lambda)$ and  $\mM_L(P,\lambda)$, respectively,
to $\bar\AF^2_\pi(P)$.
Let $\AF^2_P(\pi)^K$ the subspace of $K$-invariant functions. 
This is a finite-dimensional space.
Let $\Pi_K$ be the orthogonal projection of $\overline{\AF^2_\pi(P)}$ onto
$\AF^2_P(\pi)^K$. Note that 
$\AF^2_P(\pi)^K=0$, unless the induced representation 
$I_{P(\R)}^{G(\R)}(\pi_\infty)$ has a non-zero $K_\infty$-fixed vector.
Let $\lambda_{\pi_\infty}\in(\vs^M)^*_\C/W_M$
be the infinitesimal character of $\pi_\infty$. Then
\[
\rho(P,\lambda,f)=\hat h(\lambda_{\pi_\infty}+\lambda)\Pi_K
\]
on $\AF^2_P(\pi)$ and by (\ref{absexp}) we get
\begin{equation}\label{morexp}
J_{M,s}(f)=\sum_{\pi\in\Pi_{\di}(M(\A)^1)}\int_{\iii\vs_L^*}\hat
h(\lambda_{\pi_\infty}+\lambda)\tr\left(\mM_L(P,\pi,\lambda)M_{P|P}
(s,\pi,0)\Pi_K\right)\ d\lambda. 
\end{equation}
In particular, for $M=G$ we obtain (\ref{Jdi})
since $L^2_{\di}(G(\Q)\bs G(\A)^1)$ is multiplicity free, 

To deal with the terms $M\ne G$ we first recall the normalized intertwining operators.
The normalizing factors $r_{Q|P}(\pi,\lambda)$, described in \cite[\S5]{MR2276771},
are given by
\[
r_{Q|P}(\pi,\lambda)=\prod_{\alpha\in\sum_P\cap\sum_{\overline Q}}
r_\alpha(\pi,\sprod{\lambda}{\alpha^\vee}),
\]
where each $r_\alpha(\pi,\cdot)$ is a meromorphic function on $\C$
which is given in terms of Rankin-Selberg $L$-functions. 
More precisely, if $M$ has the form
\[
M=\GL(n_1)\times\cdots\times\GL(n_r)
\]
with $n=n_1+\dots+n_r$ then we can write $\pi=\pi_1\otimes\cdots\otimes\pi_r$,
where $\pi_i\in\Pi_{\di}(\GL(n_i,\A))$, and a root $\alpha$ corresponds 
to an ordered pair $(i,j)$ of distinct integers between $1$ and $r$.
The normalizing factor $r_\alpha(\pi,s)$ is then given by
\[
r_\alpha(\pi,s)=\frac{L(s,\pi_i\times\widetilde\pi_j)}
{L(1+s,\pi_i\times\widetilde\pi_j)\epsilon(s,\pi_i\times\widetilde\pi_j)},
\]
where $L(s,\pi_i\times\widetilde\pi_j)$ is the global Rankin-Selberg
$L$-function  and $\epsilon(s,\pi_i\times\widetilde\pi_j)$  is the
corresponding $\epsilon$-factor. 

The normalized intertwining operator is defined by
\[
N_{Q|P}(\pi,\lambda):=r_{Q|P}(\pi,\lambda)^{-1}M_{Q|P}(\pi,\lambda),\quad
\lambda\in\vs_{M,\C}^*.
\]
For a Levi subgroup $L$ let $\leviP(L)$ be the set of all parabolic subgroups
containing $L$. Using the basic properties of $(G,M)$ families
(\cite[p.~1329]{MR681738}), one gets 
\[
\mM_L(P,\pi,\lambda)=\sum_{S\in\leviP(L)}\mN_S'(P,\pi,\lambda)
\nu_L^S(P,\pi,\lambda),
\]
where the operator $\mN_S'(P,\pi,\lambda)$ is obtained from the $(G,L)$ 
family attached to $N_{Q|P}(\pi,\lambda)$ (cf.~\cite[(6.10)]{MR2276771})
and $\nu_L^S(P,\pi,\lambda)$ is defined in terms of the normalizing factors
(see below).

Therefore, by (\ref{morexp}) $J_{M,s}(f)$ equals
\[
\sum_{\pi\in\Pi_{\di}(M(\A)^1)}
\sum_{S\in\leviP(L)}\int_{\iii\vs_L^*}\hat
h(\lambda_{\pi_\infty}+\lambda)\nu_L^S(P,\pi,\lambda)
\tr\left(M_{P|P}(s,\pi,0)\mN_S'(P,\pi,\lambda)_K\right)\ d\lambda. 
\]
where $\mN_S'(P,\pi,\lambda)_K$ denotes the
restriction of $\mN_S'(P,\pi,\lambda)$ to $\AF^2_\pi(P)^K$.
By \cite[(6.13)]{MR2276771}
\[
\norm{\mN_S'(P,\pi,\lambda)_K}=O_K(1)
\]
for all $\lambda\in\iii\vs_M^*$ and $\pi\in\Pi_{\di}(M(\A)^1)$. Since 
$M_{P|P}(s,\pi,0)$ is unitary, $J_{M,s}(f)$ is bounded by a constant
multiple of
\begin{equation}\label{Jbnd1}
\sum_{\pi\in\Pi_{\di}(M(\A)^1)}
\sum_{S\in\leviP(L)}\dim\left(\AF^2_\pi(P)^K\right)\int_{\iii\vs_L^*}
\abs{\hat h(\lambda_{\pi_\infty}+\lambda)}\cdot
\abs{\nu_L^S(P,\pi,\lambda)}\ d\lambda. 
\end{equation}

The function $\nu_L^S(P,\pi,\lambda)$ can be described as follows. If $F$ is a
subset of $\Sigma(G,A_M)$, let $F_L^\vee=\{\alpha_L^\vee:\alpha\in F\}$ (as a multiset). Suppose that $S\in\pars(L_1)$ with $L_1\supset L$. Then by \cite[Proposition 7.5]{MR681738} we have
\[
\nu_L^S(P,\pi,\lambda)=\sum_F\vol(\vs_L^{L_1}/\Z(F_L^\vee))
\cdot\prod_{\alpha\in F}
\frac{r_\alpha'(\pi,\sprod{\lambda}{\alpha^\vee})}
{r_\alpha(\pi,\sprod{\lambda}{\alpha^\vee})}
\]
where $F$ runs over all subsets of $\Sigma(L_1,A_M)$ such that $F_L^\vee$ 
is a basis of $\vs_L^{L_1}$.
The argument of \cite[Proposition 5.1]{MR2276771} in conjunction with 
[ibid., Lemmas 5.3 and 5.4] gives
\[
\int_T^{T+1}\abs{\frac{r'(\pi_1\otimes\pi_2,\iii t)}{r(\pi_1\otimes\pi_2,\iii t)}}\ dt
=O\left(\log(\abs{T}+\norm{\lambda_{\pi_{1,\infty}}}+
\norm{\lambda_{\pi_{2,\infty}}}+2)\right)
\]
for all $\pi_i\in\Pi_{\di}(\GL(m_i,\A))$, $i=1,2$ and any $T\in\R$.
Therefore,
\begin{equation} \label{avenu}
\int_{B(\mu)\cap \iii\vs_L^*}\abs{\nu_L^S(P,\pi,\lambda)}\ d\lambda
=O\left(\log^l(\norm{\lambda_{\pi_\infty}+\mu}+2)\right)
\end{equation}
for any $\pi\in\Pi_{\di}(M(\A)^1)$ and $\mu\in\iii\vs_L^*$ where 
$l=\dim\vs_L^*$.
On the other hand, 
upon replacing $K_f$ by an open subgroup which is normal in $K_f(1)=\prod_{p<\infty}\GL(n,\Z_p)$
we can use (\cite[(3.5) and (3.7)]{MR2276771}) to obtain
\begin{equation} \label{multM}
\sum_{\pi\in\Pi_{\di}(M(\A)^1):\lambda_{\pi_\infty}=\lambda}
\dim\left(\AF^2_\pi(P)^K\right)\le [K_f(1):K_f]m_M^{K_f\cap M(\A_f)}(\lambda)
\end{equation}
for any $\lambda\in(\vs^M_\C)^*$
where $m_M^{K_f\cap M(\A_f)}$ measures the multiplicity
with respect to the locally symmetric space $M(F)\bs M(\A)^1/(K_f\cap M(\A_f))$.

We are now ready to prove Proposition \ref{mainspec} by induction on $n$. For
$\mu\in (\vs^M_\C)^*$ let $B^M(\mu)$ be the ball in $(\vs^M_\C)^*$ of 
radius $1$ with center $\mu$.
Using (\ref{multM}) and taking into account Remark \ref{rem10} 
(end of section 4) we can use the induction hypothesis
to apply Proposition \ref{locuprbnd} to $M$ to infer that
\begin{equation} \label{frob}
\sum_{\pi\in\Pi_{\di}(M(\A)^1):\Im\lambda_{\pi_\infty}\in B^M(\mu)}
\dim\left(\AF^2_\pi(P)^K\right)=O(\varbeta^M(\mu))
\end{equation}
for any $\mu\in(\vs^M)^*$.

Let $\vs_M^L=\vs_M\cap\vs^L$ and
denote by $(\vs_M^L)^\perp=(\vs^M)^*\oplus\vs_L^*$ its annihilator in $\vs^*$.
Let $\Latt$ be a lattice in $\iii(\vs_M^L)^\perp$ such that the balls
$B(\mu)\cap \iii(\vs_M^L)^\perp$, $\mu\in\Latt$ cover $\iii(\vs_M^L)^\perp$.
We estimate (\ref{Jbnd1}) by splitting the sum and the integral to the sets
$\Im\lambda_{\pi_\infty}+\lambda\in B(\mu)\cap \iii(\vs_M^L)^\perp$,
$\mu\in\Latt$, and recalling that $\Re\lambda_{\pi_\infty}\le\norm{\rho^M}$.
We obtain that (\ref{Jbnd1}) is bounded by
\[
\sum_{\mu\in\Latt}
\max_{B_{1+\norm{\rho^M}}(\mu)}\abs{\hat h}\sum_{S\in\leviP(L)}
\sum_{\substack{\pi\in\Pi_{\di}(M(\A)^1)\\\Im\lambda_{\pi_\infty}\in
B^M(\mu^M)}}
\dim\left(\AF^2_\pi(P)^K\right)
\int_{B(\mu_L)\cap \iii\vs_L^*}\abs{\nu_L^S(P,\pi,\lambda)}\ d\lambda.
\]
By (\ref{avenu}) and (\ref{frob}) this is majorized by
\begin{equation} \label{lstbnd}
\sum_{\mu\in\Latt}\max_{B_{1+\norm{\rho^M}}(\mu)}\abs{\hat h}
\ \varbeta^M(\mu^M)\log^l(\norm{\mu}+2).
\end{equation}
Using Lemma \ref{scr}, the series can be estimated by $O(\nm (h))$. 
Thus, $J_{M,s}(\FFF(h))=O(\nm (h))$.
This gives the first part of Proposition \ref{mainspec}.
To show the second part note that for $M\neq G$ we have 
\[
\varbeta^M(\mu^M)=\begin{cases}1,&\text{if }n=2;
\\O\left((\norm{\mu}+1)^{d-r-2}\right),&
\text{otherwise}.\end{cases}
\]
Thus, by (\ref{lstbnd}) we have
\[
\int_{t\Omega}J_{M,s}(\FFF(h_\mu))\ d\mu=
\begin{cases}O_{h,\Omega}(t^{d-2}\log^r(t+2)),&\text{if }n>2,\\
O_{h,\Omega}(t^{d-1}\log(t+2)),&\text{if }n=2.\end{cases}
\]
This concludes the proof of Proposition \ref{mainspec}.


\end{document}